\newtheorem{theorem}{Theorem}[section]
\newtheorem*{theorem-nn}{Theorem}
\newtheorem{proposition}[theorem]{Proposition}
\newtheorem{lemma}[theorem]{Lemma}
\newtheorem{corollary}[theorem]{Corollary}
\newtheorem*{conjecture-nn}{Conjecture} \theoremstyle{definition}
\newtheorem{definition}[theorem]{Definition}
\newtheorem{remark}[theorem]{Remark}
\newcommand{\Fraisse}{Fra\"iss\'e } 
\newcommand{\cK}{\mathcal{K}}
\newcommand{\cM}{\mathcal{M}} 
\newcommand{\cKp}{\mathcal{K}_p} 
\newcommand{\cMp}{\mathcal{M}_p}
\newcommand{\bbQU}{\mathbb{QU}}
\newcommand{\oUr}{\mathbb{QU}_{\prec}}
\newcommand{\ra}{\rightarrow}
\newcommand{\id}{1}
\newcommand{\aut}[1]{\mathop{\mathrm{Aut}}(#1)}
\newcommand{\homeo}[1]{\mathop{\mathrm{Homeo}}(#1)}
\newcommand{\homeop}[1]{\mathop{\mathrm{Homeo}}^{+}(#1)}
\newcommand{\orb}[2]{\mathop{\mathrm{Orb_{#1}}}(#2)}
\newcommand{\den}{\mathcal{D}} 
\newcommand{\iso}[1]{\mathop{\mathrm{Iso}}(#1)}
\newcommand{\isom}[2]{\mathop{\mathrm{Iso}_{#1}}(#2)}
\newcommand{\dom}[1]{\mathop{\mathrm{dom}}(#1)}
\newcommand{\ran}[1]{\mathop{\mathrm{ran}}(#1)}
\newcommand{\es}{\emptyset} 
\newcommand{\eps}{\varepsilon}
\newcommand{\diam}{diam} 
\newcommand{\fix}[1]{\mathop{\mathrm{F}}(#1)}
\newcommand{\per}[1]{\mathop{\mathrm{Z}}(#1)}
\newcommand{\essen}[1]{\mathop{\mathrm{Ess(#1)}}}
\newcommand{\sign}{\mathop{\mathrm{sign}}}
\newcommand{\bA}{\mathbf{A}} 
\newcommand{\bB}{\mathbf{B}}
\newcommand{\bC}{\mathbf{C}} 
\newcommand{\bD}{\mathbf{D}}
\newcommand{\bE}{\mathbf{E}} 
\newcommand{\bbG}{\mathbb{G}} 
\newcommand{\bbK}{\mathbb{K}}
\newcommand{\bbN}{\mathbb{N}}
\newcommand{\bbQ}{\mathbb{Q}} 
\newcommand{\bbU}{\mathbb{U}}
\newcommand{\solecki}{Solecki} 
\newcommand{\ssolecki}{S\l{}awomir
  Solecki }
\newcommand{\N}{\mathbb{N}}
\newcommand{\Q}{\mathbb{Q}} 
\newcommand{\Z}{\mathbb{Z}}
\newcommand{\lefint}{L} 
\newcommand{\rigint}{R}
\newcommand{\tsn}{E_{TS}^n}
\author{Konstantin Slutsky}
\title{Non-Genericity Phenomenon in Some Ordered Fra\"iss\'e classes}
\begin{document}
\maketitle

\begin{abstract}
  We show that every two-dimensional class of topological similarity,
  and hence every diagonal conjugacy class of pairs, is meager in the
  group of order preserving bijections of the rationals and in the
  group of automorphisms of the randomly ordered rational Urysohn
  space.
\end{abstract}

\section{Introduction}
\label{sec:intro}

The size of conjugacy classes (in the topological sense: dense,
meager, comeager, etc.) in the groups of automorphisms of the \Fraisse
limits has recently become an active area of research. This is
partially due to the newly revealed connections between combinatorial
properties of the \Fraisse classes and algebraic, topological, and
dynamical properties of the groups of automorphisms of their
limits. One of the most astonishing links was established by Kechris,
Pestov and Todorcevic in \cite{KPT} and displays close relationship
between the Ramsey theory (a purely combinatorial area) and the extreme
amenability (a classical dynamical notion).

Another reason for the interest in the size of conjugacy classes of
Polish groups in general, and groups of automorphisms of \Fraisse
limits in particular, comes from the special importance of some
concrete groups, e.g., the group of automorphisms of the countable
atomless Boolean algebra (which is isomorphic via the Stone's theorem
to the group of homeomorphisms of the Cantor space), the group of
isometries of the rational Urysohn space, and the group of order
preserving bijections of the rationals. The reader may consult
\cite{KR} for details, examples, and a deep structural theory for the
groups with large conjugacy classes.

In \cite{T} J. K. Truss looked at different possible notions of
genericity of conjugacy classes and discussed advantages of each. In
general, conjugacy classes are objects that are difficult to
understand, and the relation of conjugation may sometimes be very
complicated (complete analytic). Motivated by the work of Truss, in
this paper we look at a coarser equivalence relation than conjugation,
namely at classes of topological similarity. They are much easier to
work with and can be used to prove the {\it non}-genericity in some
cases.

Let us
recall the definition from \cite{R}.

\begin{definition}
  Let $G$ be a topological group, an $n$-tuple $(g_1, \ldots, g_n) \in
  G^n$ is said to be {\it topologically similar\/} to an $n$-tuple
  $(f_1, \ldots, f_n) \in G^n$ if the map $F$ sending $g_i \mapsto
  f_i$ extends (necessarily uniquely) to a bi-continuous isomorphism
  between the groups generated by these tuples
  \[ F : \langle g_1, \ldots, g_n \rangle \ra \langle f_1, \ldots, f_n
  \rangle.\]We denote this relation by $\tsn$.
\end{definition}

There is another natural relation on the $n$-tuples in $G$, namely the
relation of diagonal conjugation, i.e, $(g_1, \ldots, g_n)$ is
conjugate to $(f_1, \ldots, f_n)$ if there is some $\alpha \in G$ such
that
\[(\alpha g_1 \alpha^{-1}, \ldots, \alpha g_n \alpha^{-1}) = (f_1,
\ldots, f_n).\] More generally, if $G$ is a topological subgroup of
$H$, then one can restrict the conjugacy relation in $H$ to $G$, i.e,
say that an $n$-tuple $(g_1, \ldots, g_n) \in G^n$ conjugates in $H$
to an $n$-tuple $(f_1, \ldots, f_n) \in G^n$ if there is some $\beta
\in H$ such that
\[(\beta g_1 \beta^{-1}, \ldots, \beta g_n \beta^{-1}) = (f_1, \ldots,
f_n).\] It is easy to see that this is an equivalence relation on
$n$-tuples of $G$ and we denote it by $E_H^n$ (this relation also
depends on $G$ and on the embedding of $G$ into $H$, but this
information is usually clear from the context). The following proposition
is obvious.
\begin{proposition}
  Let $G$ be a topological group and $n \in \N$, then
  \begin{enumerate}[(i)]
  \item relation of topological similarity is an equivalence relation;
  \item let $H$ be any topological group such that $G \le H$ is a
    topological subgroup of it, then $E_H^n$ is finer (not necessarily
    strictly) than $\tsn$. In particular, $E_G^n$ is finer than
    $\tsn$.
  \end{enumerate}
\end{proposition}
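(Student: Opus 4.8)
The plan is to verify the three defining properties of an equivalence relation for part (i) and, for part (ii), to exhibit an explicit bi-continuous isomorphism induced by conjugation.

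For (i), reflexivity is witnessed by the identity map of $\langle g_1, \ldots, g_n \rangle$, which is trivially a bi-continuous isomorphism fixing each $g_i$. For symmetry, if $F \colon \langle g_1, \ldots, g_n \rangle \to \langle f_1, \ldots, f_n \rangle$ is a bi-continuous isomorphism with $F(g_i) = f_i$, then $F^{-1}$ is again a group isomorphism, it is bi-continuous precisely because $F$ is, and it satisfies $F^{-1}(f_i) = g_i$, so it witnesses $(f_1, \ldots, f_n) \mathrel{\tsn} (g_1, \ldots, g_n)$. Transitivity is equally immediate: the composite of two bi-continuous isomorphisms is a bi-continuous isomorphism, and it sends the relevant generators to one another. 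The one point worth recording is the uniqueness clause in the definition — a homomorphism out of $\langle g_1, \ldots, g_n \rangle$ is determined by its values on the generators $g_1, \ldots, g_n$, so once the images of the $g_i$ are fixed there is at most one extension, and no ambiguity arises when inverting or composing.

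For (ii), suppose $(g_1, \ldots, g_n) \mathrel{E_H^n} (f_1, \ldots, f_n)$, say $\beta g_i \beta^{-1} = f_i$ for some $\beta \in H$. The inner automorphism $c_\beta \colon H \to H$, $x \mapsto \beta x \beta^{-1}$, is an isomorphism of topological groups with continuous inverse $c_{\beta^{-1}}$. It carries each $g_i$ to $f_i$, hence carries the subgroup $\langle g_1, \ldots, g_n \rangle$ isomorphically onto $\langle f_1, \ldots, f_n \rangle$ (note that this subgroup is the same subset of $H$ whether generated inside $G$ or inside $H$). Restricting $c_\beta$ therefore yields a group isomorphism $\langle g_1, \ldots, g_n \rangle \to \langle f_1, \ldots, f_n \rangle$ sending $g_i \mapsto f_i$ that is continuous with continuous inverse for the subspace topologies inherited from $H$. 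Since $G \le H$ is a \emph{topological} subgroup, the subspace topology on any subset of $G$ is the same whether computed relative to $G$ or relative to $H$; hence the restriction of $c_\beta$ is a bi-continuous isomorphism in the sense of the definition, giving $(g_1, \ldots, g_n) \mathrel{\tsn} (f_1, \ldots, f_n)$. This proves $E_H^n \subseteq \tsn$, i.e.\ that $E_H^n$ is finer than $\tsn$, and the case $H = G$ yields the stated particular instance.

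There is no genuine obstacle here — the statement is ``obvious'' as the text says — and the only place where one should avoid carelessness is the coincidence of the two a priori different subspace topologies on $\langle g_1, \ldots, g_n \rangle$, which is exactly what the hypothesis that $G$ is a topological subgroup of $H$ provides; the qualifier ``not necessarily strictly'' is a parenthetical observation requiring no argument.
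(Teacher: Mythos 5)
The paper offers no proof, dismissing the proposition as obvious, so there is nothing to compare against; your proof is the natural one and is correct, with the only nontrivial point (that the subspace topology on $\langle g_1,\ldots,g_n\rangle$ is the same whether taken relative to $G$ or to $H$) properly identified and handled.
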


Equivalence classes of topological similarity on $n$-tuples are called
{\it $n$-dimensional similarity classes}, in particular
two-dimensional similarity class is a set of pairs.

Since conjugacy classes refine classes of topological similarity, if
one wants to prove meagerness of the former, it suffices to prove
meagerness of the latter (it suffices, but may be impossible,
meagerness of conjugacy classes does not implies meagerness of classes
of topological similarity). This sometimes turns out to be an easier
task. For example, Rosendal in \cite{R} developed this idea to find a
simple proof of the del Junco's result, that each conjugacy class of
measure preserving automorphisms of the standard Lebesgue space is
meager.

Hodkinson showed (see \cite{T} for the details) that in the group of order
preserving automorphisms of the rationals all conjugacy classes of
pairs are meager (though it is known that there is a comeager
one-dimensional conjugacy class in $\aut{\Q}$). In this paper we
strengthen this result and show that all two-dimensional classes of
topological similarity are meager in this group.

The paper is organized as follows. In the second section we give a
brief introduction to the theory of \Fraisse classes, which is the
right context for the technique, developed in this paper. The third
section is devoted to the strengthening of the Truss's result on
classes of diagonal conjugation in the rationals, in the fourth and
fifth sections analogous theorem is proved for the group of order
preserving isometries of the randomly ordered rational Urysohn space.

{\bf Acknowledgment.} The author wants to thank Christian Rosendal
for advising him during the process of writing this paper, for posing
questions, that are covered here, and for helpful and very inspiring
communication and comments. The author also thanks \ssolecki and Ward
Henson for helpful discussions.

\section{Brief introduction to \Fraisse classes}
\label{sec:introFr}

In this section we give a short introduction to the theory of \Fraisse
classes. In the next two sections we deal with two examples of them, so it
is useful to keep in mind this more general setting. The classical text on
\Fraisse classes is a beautiful book by Hodges \cite{H}. 

Let $L$ be a relational first order language. We use the standard
notation: solid arrows correspond to ``for all'' quantifiers, and
dashed arrows represent maps that ``exist''.

\begin{definition}
Let $\bA$ and $\bB$ be two $L$-structures. A map $f : \bA \to \bB$ is
called a {\it strong homomorphism} if for any $\mathcal{R} \in L$ of
arity $m$ and for any $x_1, \ldots, x_m \in \bA$
\[\mathcal{R}^{\bA}x_1 \ldots x_m \iff \mathcal{R}^{\bB}f(x_1)\ldots f(x_m).\]
The map is a {\it strong embedding} if it is an injective strong
homomorphism.
\end{definition}

\begin{definition}
  Let $\cK$ be a class of finite $L$-structures. For $L$-structures $\bA$ and
  $\bB$ by $\bA \le \bB$ we mean ``$\bA$ strongly embeds into
  $\bB$''. $\cK$ is called a \Fraisse class if the following
  properties hold:
  \begin{enumerate}[(i)]
  \item[(HP)] If $\bA \le \bB$ and $\bB \in \cK$ then $\bA \in \cK$;
  \item[(JEP)] For $\bA \in \cK$ and $\bB \in \cK$ there is some $\bC
    \in \cK$ such that $\bA \le \bC$ and $\bB \le \bC$;
  \item[(AP)] For $\bA \in \cK$, $\bB \in \cK$, $\bC \in \cK$ and
    embeddings $i : \bA \ra \bB$, $j : \bA \ra \bC$ there is $\bD \in
    \cK$ and embeddings $k : \bB \ra \bD$, $l : \bC \ra \bD$ such that
    $k \circ i = l \circ j$, i.e., the following diagram commutes

\[\xymatrix{    & \bB \ar@{-->}[dr]^{k} &    \\
          \bA \ar[ur]^{i} \ar[dr]^{j} &     & \bD\\
              & \bC \ar@{-->}[ur]^{l} &     }\]

  \item[(Inf)] $\cK$ contains structures of arbitrarily high finite
    cardinality and has up to isomorphism only countably many
    structures.
  \end{enumerate}
\end{definition}


Basic examples of \Fraisse classes are: finite sets, finite linear
orders, finite graphs and finite metric spaces with rational distances
(to satisfy (Inf) condition). With a \Fraisse class
$\cK$ one can associate its \Fraisse limit (which is unique up to an
isomorphism). 


\begin{definition}
  Countably infinite structure $\bbK$ is called a {\it \Fraisse
    limit\/} of the class $\cK$ if the following holds:
  \begin{enumerate}[(i)]
  \item Finite substructures of $\bbK$ up to isomorphism are exactly
    the elements of $\cK$;
  \item $\bbK$ is ultrahomogeneous (that is any isomorphism between
    finite substructures of $\bbK$ extends to a full automorphism of
    $\bbK$).
  \end{enumerate}
\end{definition}

\Fraisse limits of the above \Fraisse classes are: $\bbN$ ---
countably infinite set, $\bbQ$ --- dense linear ordering without
endpoints, $\bbG$ --- random graph, $\Q\bbU$ --- rational Urysohn space.

In the next section we deal with the simplest linearly ordered
\Fraisse class: with the rationals $\Q$. The fourth and fifth sections
are devoted to the case of the linearly ordered rational Urysohn
space.

\section{Topological Similarity Classes in the Groups $\aut{\Q}$ and
  $\homeop{[0,1]}$}
\label{sec:autq}

Let $\Q$ denote rational numbers viewed as a linearly ordered set. By
an open interval $I = (a,b) \subset \Q$ we mean the set of rational
numbers $\{c : a < c < b\} \subset \Q$. A closed interval $[a,b]$ also
includes endpoints $a$ and $b$. If $I$ is a bounded interval (open or
closed) $\lefint(I)$ will denote its left endpoint and $\rigint(I)$
will be its right endpoint. If $\bA \subset \Q$ is a finite subset,
$\min(\bA)$ and $\max(\bA)$ will denote its minimal and maximal
elements respectively.  

Let $G$ denote the group $\aut{\Q}$ of order
preserving bijections of the rationals.

\begin{definition}
  A partial isomorphism of $\Q$ is an order preserving bijection $p$
  between {\it finite\/} subsets $\bA$ and $\bB$ of $\Q$.
\end{definition}
It is a basic property of rationals (and, as mentioned earlier, of a
\Fraisse limit in general) that each partial isomorphism can be
extended (certainly, not uniquely) to a full automorphism.

Letters $p$ and $q$ (with possible sub- or superscripts) will denote
partial isomorphisms; let $\dom{p}$ be the domain of $p$, and
$\ran{p}$ be its range. If $I \subseteq \Q$ then $p|_{I}$
denotes restriction of $p$ on $I \cap \dom{p}$; $\fix{p}$ will be the set of
fixed points in the domain of $p$, i.e,
\[\fix{p} = \{c \in \dom{p} : p(c) = c\}.\]

First we recall that $G$ is a Polish group (i.e., a separable
completely metrizable topological group) in the topology given by the
basic open sets
\[U(p) = \{g \in G : \textrm{$g$ extends $p$}\},\] where $p$ is a
partial isomorphism of $\Q$. Note that if $p$ and $q$ are two partial
isomorphisms and $q$ extends $p$ then $U(q) \subseteq U(p)$; we will
use this observation frequently. We denote the identity element of $G$
by $\id$. 

We use words generic and comeager as synonyms. For example, a property
is generic in the group $G$ if the set of elements with this property
is comeager in $G$.

Let $F(s,t)$ denote the free group on two generators: $s$ and $t$;
elements of $F(s,t)$ are reduced words on the alphabet $\{s, t,
s^{-1}, t^{-1}\}$. Every element $w \in
F(s,t)$ has certain length associated to it, namely the length of the
reduced word $w$. This length is denoted by
$|w|$. If $u, v \in F(s,t)$ are words, we say that the word $uv \in
F(s,t)$ is reduced if $|uv| = |u| + |v|$, that is there is no
cancellation between $u$ and $v$.

If $w \in F(s,t)$ is a reduced word, $w=t^{n_k}s^{m_k} \cdots
t^{n_1}s^{m_1}$, and $p$, $q$ are partial isomorphisms, then we can
define a partial isomorphisms $w(p,q)$ by $w(p,q)(c) = q^{n_k}p^{m_k}
\cdots q^{n_1}p^{m_1}(c)$, whenever the right-hand side is defined. The
{\it orbit\/} of $c$ under $w(p,q)$ is by definition
\begin{multline*}
  \orb{{\it w(p,q)}}{c} = \cup_{l=1}^k \{p^{i\sign(m_l)} q^{n_{l-1}}
  \cdots p^{m_1}(c),\
  q^{j \sign(n_l)} p^{m_l} \cdots p^{m_1}(c) : \\
  i=0, \ldots, |m_l|,\ j=0, \ldots, |n_l|\}.
\end{multline*}

We say that a word $w$ starts from the word $v$ if $w$ can be written
as $w = vu$ for some word $u$, where $vu$ is reduced. Similarly, we
say that $w$ ends in $v$ if there is a word $u$ such that $w = uv$,
where $uv$ is reduced. On the one hand this is consistent with
intuitive understanding of this notions for, say, left-to-right
languages. On the other hand, we consider left actions, and then {\it
  the end of the word acts first}, i.e., if $w=st$ then $w(p,q)(c) =
p(q(c))$. This may be a bit confusing, we apologize for that and
emphasize this possible confusion.



\begin{definition}
  Let $p$ be a partial isomorphism of $\Q$. An interval $(a,b) \subset
  \Q$ is called $p$-{\it increasing\/} if $a, b \in \dom{p}$, $p(a) =
  a$, $p(b) = b$ and $p(c) > c$ for any $c \in \dom{p} \cap
  (a,b)$. Definition of $p$-{\it decreasing\/} interval is
  analogous. Note that if $[a,b] \cap \dom{p} = \{a, b\}$ and $p(a) =
  a$, $p(b) = b$ then the interval $(a,b)$ is both $p$-increasing and
  $p$-decreasing.  An interval is $p$-{\it monotone\/} if it is either
  $p$-increasing or $p$-decreasing.
\end{definition}

\begin{definition}
  Let $p$ be a partial isomorphism. Let $\dom{p} = \{a_0, \ldots,
  a_n\}$ and assume that $a_0 < \ldots < a_n$. We say that $p$ is {\it
    informative\/} if $p(a_0) = a_0$, $p(a_n) = a_n$ and there is a
  list $\{i_0, \ldots, i_r\}$ of indices such that
  \begin{enumerate}[(i)]
  \item $i_0 = 0$, $i_r = n$;
  \item $a_{i_k} = p(a_{i_k})$ for $0 \le k \le r$;
  \item for any $0 \le k < r$ the interval $(a_{i_k}, a_{i_{k+1}})$ is
    $p$-monotone.
  \end{enumerate}

  If $p$ is an informative partial isomorphism and $\dom{p} = \{a_0,
  \ldots, a_n \}$ as above then we set
  \[ \essen{p} = \big( \dom{p} \cup \ran{p} \big) \setminus \{a_0,
  a_n\}\] and refer to it as to {\it the set of essential points of
    $p$}.

\bigskip

\input{informative.pst}

\bigskip
\end{definition}

\begin{definition}
  A pair $(p, q)$ of partial isomorphisms is called {\it piecewise
    elementary\/} if the following holds
  \begin{enumerate}[(i)]
  \item $p$ and $q$ are informative;
  \item $\min( \dom{p} ) = \min( \dom{q} )$,
  \item $\max( \dom{p} ) = \max( \dom{q} )$.
  \end{enumerate}
  If additionally $\fix{p} \cap \fix{q}$ has cardinality at most $2$
  (i.e., consists of the above minimum and maximum) then the pair $(p,q)$
  is called {\it elementary}.
\end{definition}

Let $(p, q)$ be a piecewise elementary pair, and $\fix{p} \cap \fix{q}
= \{a_0, \ldots, a_n\}$ with $a_i < a_j$ for $i< j$. Set $I_j = [a_j,
a_{j+1}]$, then $(p|_{I_j}, q|_{I_j})$ is elementary for any $0 \le j
< n$. Thus every piecewise elementary pair $(p,q)$ can be decomposed
into finitely many elementary pairs.

The following obvious lemma partially explains the importance of piecewise
elementary pairs.
\begin{lemma}\label{pelem-dense}
  For any open $V \subseteq G \times G$ there is a piecewise
  elementary pair $(p,q)$ such that $U(p) \times U(q) \subseteq V$.
\end{lemma}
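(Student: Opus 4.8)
The plan is to start from an arbitrary nonempty basic open set inside $V$, which we may assume has the form $U(p_0) \times U(q_0)$ for some partial isomorphisms $p_0, q_0$ of $\Q$, and then enlarge $p_0$ and $q_0$ to a piecewise elementary pair $(p,q)$ with $U(p) \times U(q) \subseteq U(p_0) \times U(q_0)$. Since extending a partial isomorphism only shrinks its basic open set, it suffices to build $p \supseteq p_0$ and $q \supseteq q_0$ that are informative and share a common minimum and maximum of their domains. The whole construction is a finite back-and-forth bookkeeping exercise using the ultrahomogeneity of $\Q$ (equivalently, the fact that any partial isomorphism of $\Q$ extends to a larger one with prescribed new points in its domain).

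First I would arrange the common endpoints. Pick rationals $a < \min\big(\dom{p_0} \cup \dom{q_0} \cup \ran{p_0} \cup \ran{q_0}\big)$ and $b > \max\big(\dom{p_0} \cup \dom{q_0} \cup \ran{p_0} \cup \ran{q_0}\big)$, and extend both $p_0$ and $q_0$ by setting the new points $a$ and $b$ to be fixed; this keeps them order preserving and does not disturb the old values. So without loss of generality $\min(\dom p) = \min(\dom q) = a$, $\max(\dom p) = \max(\dom q) = b$, and $a, b$ are fixed by both. Next I would make each of $p$ and $q$ informative separately; by symmetry consider $p$. Enumerate $\dom{p} = \{a_0 < \cdots < a_n\}$ (now $a_0 = a$, $a_n = b$). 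For each consecutive pair $(a_i, a_{i+1})$ for which the interval $(a_i, a_{i+1})$ is not already $p$-monotone, there must be points $c, d \in \dom{p} \cap (a_i, a_{i+1})$ with, say, $p(c) > c$ and $p(d) < d$; in that case I would insert a new fixed point of $p$ strictly between them — using that $\Q$ is dense and that the partial isomorphism extends — thereby splitting the offending interval. Iterating over all consecutive pairs (only finitely many, and each insertion strictly increases $|\fix{p}|$ inside a bounded region, so the process terminates) produces a list $\{i_0 = 0, \dots, i_r = n\}$ of indices of fixed points of $p$ such that every interval $(a_{i_k}, a_{i_{k+1}})$ is $p$-monotone, which is exactly informativeness. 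Do the same for $q$. One small point to watch: adding points to $\dom p$ in this step may destroy informativeness of $q$ only if we insisted on a common domain — but we do not, so $p$ and $q$ are handled independently, and at the end we still have $\min(\dom p) = \min(\dom q) = a$ and $\max(\dom p) = \max(\dom q) = b$ because those endpoints were never removed.

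The resulting $(p,q)$ is then piecewise elementary by construction, and $U(p) \times U(q) \subseteq U(p_0) \times U(q_0) \subseteq V$ since $p$ extends $p_0$ and $q$ extends $q_0$. I do not expect a serious obstacle here — the lemma is flagged as ``obvious'' — but the one place that needs mild care is the termination and legitimacy of the point-insertion step: one must check that inserting a new fixed point between a $p$-increasing and a $p$-decreasing witness genuinely yields an order preserving partial function (it does, since the new point's value equals the new point, which lies strictly between the surrounding values by density and monotonicity of $p$ on the relevant side), and that finitely many insertions suffice (they do, since there are finitely many consecutive pairs to repair and each repair is confined to a fixed bounded interval with the number of fixed points bounded by the number of sign changes of $p(c) - c$, which cannot increase).
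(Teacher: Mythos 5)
The paper gives no proof of this lemma (it is flagged as obvious), and your plan is exactly the intended argument: pass to a basic set $U(p_0)\times U(q_0)\subseteq V$, adjoin a common fixed minimum and maximum, and insert fixed points to separate the ascending and descending parts of each map; since extensions only shrink basic open sets, this suffices. One small wording slip: the repairs are needed between consecutive \emph{fixed} points of $p$, not between consecutive points of $\dom{p}$ (for genuinely consecutive domain points $a_i<a_{i+1}$ the set $\dom{p}\cap(a_i,a_{i+1})$ is empty), and the new fixed point should be inserted at an adjacent sign-change pair $c<d$ of domain points, chosen in the interval $\bigl(\max(c,p(c)),\min(d,p(d))\bigr)$, which is nonempty in both the up--down and down--up cases; this is what guarantees the extended map is still order preserving, and since each insertion is a fixed point it creates no new sign changes, so finitely many insertions terminate the process.
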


\begin{definition}\label{def:elem}
  Let $(p,q)$ be an elementary pair. We say that a triple $(p', q',
  w)$ {\it liberates $p$ in $(p,q)$}, where $p'$ and $q'$ are partial
  isomorphisms that extend $p$ and $q$ respectively, and $w \in
  F(s,t)$ is a reduced word, if the following holds
  \begin{enumerate}[(i)]
  \item\label{def:elem:inf} $p'$ and $q'$ are informative;
  \item\label{def:elem:presmin} $\min( \dom{p'} ) = \min( \dom{p} )$,
    $\min( \dom{q'} ) = \min( \dom{q} )$;
  \item\label{def:elem:presmax} $\max( \dom{p'} ) = \max( \dom{p} )$,
    $\max( \dom{p'} ) = \max( \dom{p} )$;
  \item\label{def:elem:wform} the word $w$ starts from a non-zero
    power of $t$, $w = t^nv$ for $n \ne 0$;
  \item $w(p',q')(c)$ is defined for any $c \in \essen{p} \cup
    \essen{q}$ and
    \[ w(p',q')(\min( \essen{p} \cup \essen{q} )) > \max( \essen{p'}
    ),\]
  \item\label{def:elem:monot} there is an open interval $J$ such that $\rigint(J) =
    \max(\dom{q})$, $q'$ is monotone on $J$ and $w(p',q')(c) \in J$
    for any $c \in \essen{p} \cup \essen{q}$; moreover, if $n > 0$ in
    the item \eqref{def:elem:wform}, then $J$ is $q'$-increasing, and it
    is $q'$-decreasing otherwise. 
  \end{enumerate}
  Similarly, we say that a triple $(p',q',w)$ {\it liberates $q$ in
    $(p,q)$} if the above holds with roles of $p$ and $q$, $s$ and $t$
  interchanged.

  For a piecewise elementary pair $(p,q)$, we say that a triple
  $(p',q',w)$ liberates $p$ [liberates $q$] in $(p,q)$ if
  \begin{enumerate}[(i)]
  \item $\min( \dom{p'} ) = \min( \dom{p} )$, $\min( \dom{q'} ) =
    \min( \dom{q} )$;
  \item $\max( \dom{p'} ) = \max( \dom{p} )$, $\max( \dom{p'} ) =
    \max( \dom{p} )$;
  \item for any interval $I$, such that $(p|_I, q|_I)$ is elementary,
    the triple $(p'|_I, q'|_I, w)$ liberates $p|_I$ [liberates $q|_I$]
    in $(p|_I, q|_I)$.
  \end{enumerate}
\end{definition}

\begin{lemma}\label{liberating-elem-weak}
  For any elementary pair $(p,q)$ there is a triple $(p',q',w)$ that
  liberates $p$ [liberates $q$] in $(p,q)$.
\end{lemma}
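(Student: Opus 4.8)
By the symmetry in Definition~\ref{def:elem} (interchange $p\leftrightarrow q$ and $s\leftrightarrow t$) it is enough to produce a triple liberating $p$. Write $S=\essen p\cup\essen q$ and $T=\dom p\cup\ran p\cup\dom q\cup\ran q$, and let $a_0,a_n$ be the common endpoints $\min(\dom p)=\min(\dom q)$, $\max(\dom p)=\max(\dom q)$. If $S=\es$ the statement is immediate (take $p'=p$, $q'=q$, $w=t$, and for $J$ a small interval with right endpoint $a_n$), so assume $S\ne\es$; then $a_0<a_n$ and, by elementariness, $\fix p\cap\fix q=\{a_0,a_n\}$. Since $T\subseteq[a_0,a_n]$ we have $S=T\setminus\{a_0,a_n\}\subseteq(a_0,a_n)$. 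Fix $b$ with $\max(T\setminus\{a_n\})<b<a_n$ and put $J=(b,a_n)$. As $T\cap(b,a_n)=\es$, we are free to prescribe $q'$ on $J$, and we take it there to be a long increasing staircase: a chain $b<h_1<\dots<h_L<a_n$ with $q'(h_i)=h_{i+1}$ for $i<L$, together with $q'(b)=b$ and $q'(a_n)=a_n$. This makes $J$ a $q'$-increasing interval with right endpoint $\max(\dom q)=a_n$.

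The one place elementariness is used is the observation that \emph{no nonempty subset of $S$ is invariant under all of $p,p^{-1},q,q^{-1}$ on the parts of $T$ where these are defined}: on such a set $C$ the restrictions of $p$ and $q$ would be order-preserving bijections of a finite linear order, hence the identity, so $C\subseteq\fix p\cap\fix q=\{a_0,a_n\}$, which is disjoint from $S$. Equivalently, for each $c\in S$ there is a word in $s^{\pm1},t^{\pm1}$ whose evaluation at $c$ using only $p,q$ and their inverses reaches, in finitely many steps, a point at which the next required generator is undefined; since $p,q$ preserve $(a_0,a_n)$, every point visited — in particular this exit point — lies in $(a_0,a_n)$. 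A finite induction over $S$ — at each stage prepending an exit-word for whichever essential points have not yet left $T$, using the extension property of $\Q$ to adjoin the new points (each forced into an order-gap, and placed outside $T$ whenever there is a choice) — yields a single word $v_0$ together with extensions $p_0\supseteq p$, $q_0\supseteq q$ such that $v_0(p_0,q_0)$ is defined on all of $S$ with $v_0(p_0,q_0)[S]\cap T=\es$.

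It remains to shepherd the finite, linearly ordered set $v_0(p_0,q_0)[S]$ up into $J$. Extend $p_0,q_0$ once more and prolong $v_0$ on the left so that the prolongation first carries each of these points up to a point just below $b$ and then applies $p'$ once to push all of them across $b$ into the foot of the staircase, landing $S$ — in order, by order-preservation — at $h_{j_1}<\dots<h_{j_m}$ with the $j_i$ small; arrange the $p'$-increasing interval effecting the crossing to terminate at a $p'$-fixed point $\gamma\in(h_{\max_i j_i},h_{\max_i j_i+1})$, keep every other new point of $p'$ below $b$, and keep $p'$ and $q'$ informative throughout (this last is the routine bookkeeping, since all new monotone pieces are increasing). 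Writing $v$ for the resulting reduced word, whose leftmost letter is $s$, set $n=\max_i j_i-\min_i j_i+1$ and $w=t^nv$, and enlarge $L$ so that $\max_i j_i+n\le L$. Then $w=t^nv$ with $n\ne0$ and $t^nv$ reduced; $w(p',q')(c_i)=(q')^n(h_{j_i})=h_{j_i+n}\in J$ for every $i$; and since $\min S$ is the least element $c_1$, corresponding to $j_1=\min_i j_i$, we get $w(p',q')(\min S)=h_{\max_i j_i+1}>\gamma=\max\essen{p'}$. The minima and maxima of the domains are visibly unchanged, $J$ is the required $q'$-increasing interval, and all of (i)--(vi) of Definition~\ref{def:elem} hold.

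The main obstacle is the second step: extracting from ``no invariant subset of $S$'' a single word that simultaneously drives every essential point out of $T$, while choosing the adjoined points so that the extensions $p'$, $q'$ stay informative with unchanged endpoints. Once the staircase on $J$ has been fixed, everything afterward is dictated by order-preservation.
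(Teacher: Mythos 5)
Your overall strategy differs from the paper's: you first drive every essential point out of $T$ and then try to herd the escaped points into the staircase interval $J$, whereas the paper works directly with the intervals of monotonicity of $p$ and $q$ and pushes $\alpha=\min(\essen{p}\cup\essen{q})$ monotonically to the right, alternating powers of $p$ and $q$. The paper's mechanism — the ``ratchet'' — is exactly that, by elementariness, every interior fixed point of $p$ lies in the interior of a $q$-monotone interval and vice versa, so whenever $\alpha$'s image sits just below a $p$-fixed point one can apply a power of $q$ to carry it past, and so on until it reaches the last $q$-monotone interval $J_l$.

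The genuine gap in your write-up is the sentence ``prolong $v_0$ on the left so that the prolongation first carries each of these points up to a point just below $b$.'' This is precisely the hard content of the lemma, and it is not dictated by order-preservation; order-preservation is the \emph{obstacle}, not the mechanism. After your escape step, the points of $v_0(p_0,q_0)[S]$ lie in various gaps of $T$, separated by interior fixed points of $p_0$ and of $q_0$. No single power of $p_0$ can carry a point past a $p_0$-fixed point, and no single power of $q_0$ can carry a point past a $q_0$-fixed point, so one must alternate; and one must further argue that such alternation always makes progress, which is where elementariness (no common interior fixed points) enters. You invoke elementariness only for the (essentially unnecessary) ``escape'' step, never for the ``drag to $b$'' step, so the crux of the argument is missing. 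Put differently: once the ratchet is available, the escape from $T$ is a detour — the paper never leaves $T$ on purpose, it just rides the ratchet to the right; and without the ratchet, your ``drag to $b$'' is unproved.

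A smaller issue: the claim that a nonempty $C\subseteq S$ invariant under $p,p^{-1},q,q^{-1}$ ``on the parts where these are defined'' would force $p|_C$ and $q|_C$ to be bijections of $C$ is not correct. Invariance in that sense only gives a bijection from $C\cap\dom{p}$ onto $C\cap\ran{p}$; for instance $C=\{c_1,c_2\}$ with $p(c_1)=c_2$, $c_2\notin\dom{p}$, $c_1\notin\ran{p}$ is invariant in your sense and $p|_C$ is not the identity. The conclusion you want (existence of an exit word) is true — just iterate $p$ or $q$ on any $c\in S$ in the moving direction until you leave the finite domain — but the invariant-subset detour as stated is flawed as well as superfluous.

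Finally, a bookkeeping remark you will need to address if you try to repair the drag step: ``keep $p'$ and $q'$ informative throughout (this last is the routine bookkeeping, since all new monotone pieces are increasing)'' is too optimistic. If $p$ has decreasing intervals, escaping from them via positive powers of $s$ moves leftward, and pushing the escaped point rightward uses negative powers of $s$; either way the new pieces created on $\dom{p'}$ inherit the sense of the interval they land in, not always increasing. The paper sidesteps this by never changing the decomposition into monotone intervals of $p$ and $q$ — it only adds new domain points inside existing intervals, respecting their sense.
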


\begin{proof}
  We show the existence of a triple that liberates $p$, the second
  clause then follows by symmetry.

  Extending $p$ and $q$ if necessary, we may assume that
  \begin{enumerate}[(i)]
  \item $\essen{p} \ne \es$, $\essen{q} \ne \es$;
  \item $I_1, \ldots, I_k$ are all the (open) intervals of
    monotonicity for $p$ and $J_1, \ldots, J_l$ are all the (open)
    intervals of monotonicity for $q$; we list them in the increasing
    order, i.e., $I_i < I_{i+1}$, $J_j < J_{j+1}$;
  \item\label{smp:nonemp} $I_1 \cap \dom{p} \ne \es$ and $J_1 \cap
    \dom{q} \ne \es$;
  \item $I_k \cap \dom{p} = \es$ and $J_l \cap \dom{q} = \es$;
  \item $\lefint(I_k) > \lefint(J_l)$.
  \end{enumerate}

  Let $\alpha = \min (\essen{p} \cup \essen{q})$. Then $\alpha \in I_1
  \cap J_1$ by \eqref{smp:nonemp} (and in particular $\alpha$ is not a
  fixed point of $p$ or $q$).  We first find an extension $p_1$ of $p$
  and $m_1 \in \Z$ such that $p_1^{m_1}(\alpha)$ is defined and is
  ``close enough'' to the right endpoint of $I_1$. ``Close enough''
  exactly means the following. Since by assumptions $\rigint(I_1)$ is
  not fixed by $q$ (because $(p,q)$ is elementary), there is some
  $j_1$ such that $\rigint(I_1) \in J_{j_1}$ and we want
  $p_1^{m_1}(\alpha) \in J_{j_1}$. Note that $p_1$ is informative and
  has the same intervals of monotonicity as $p$. At the second step we
  find extension $q_1$ of $q$ and $n_1 \in \Z$ such that
  $q_1^{n_1}p_1^{m_1}(\alpha)$ is defined and is ``close enough'' in
  the above sense to the right endpoint of $J_{j_1}$. We proceed in
  this way and stop as soon as the image of $\alpha$ reaches $J_l$,
  i.e, we obtain extensions $\bar{p}$, $\bar{q}$ of $p$ and $q$ and a
  word $u = s^{m_{N+1}}v$, where $v = t^{n_N}s^{m_N} \cdots
  t^{n_1}s^{m_1}$ such that $u(\bar{p}, \bar{q})(\alpha)$ is defined,
  lies in $J_l$ and $v(\bar{p}, \bar{q})(\alpha) \not \in J_l$. Note
  that since we added to the domain of $q$ only points of the orbit of
  $\alpha$ under $u$, this implies $\dom{\bar{q}} \cap J_l =
  \es$. Also by induction $\bar{p}$ and $\bar{q}$ are informative with
  the same decomposition into intervals of monotonicity as for $p$ and
  $q$.

  The following figure illustrates the construction (horizontal
  arrows indicate monotonicity of partial isomorphisms, bars stand for
  fixed points, the black dot is the minimal element $\alpha$, and
  gray dots are its images under $w$):

  \bigskip

  \input{fig1.pst}

  \bigskip

  We now take extension $\bar{p}'$, $\bar{q}'$ of $\bar{p}$ and
  $\bar{q}$ such that
  \begin{enumerate}[(i)]
  \item $u(\bar{p}',\bar{q}')(c)$ is defined for every $c \in
    \essen{p} \cup \essen{q}$;
  \item $\bar{p}'$ and $\bar{q}'$ are informative with the same
    decomposition into intervals of monotonicity as for $\bar{p}$ and
    $\bar{q}$;
  \item minimum and maximum of the domains of $\bar{p}'$ and
    $\bar{q}'$ are equal to the minimum and maximum of the domains of
    $\bar{p}$ and $\bar{q}$;
  \item $\bar{q}'$ is increasing on $J_l$ (this is possible since $J_l \cap
    \dom{\bar{q}} = \es$).
  \end{enumerate}

  Set $p' = \bar{p}'$. Finally extending $\bar{q}'$ to $q'$ we can
  find $M \in \N$ such that
  \[ q'^M u(p',q')(\alpha) > \max( \essen{p'}).\] And so let $w =
  t^Mu$, then $(p',q',w)$ liberates $p$ in $(p,q)$.
\end{proof}

\begin{remark}\label{fg-word-sym}
  In the lemma above we started our construction from applying a power
  of $p$, but we likewise could start it from applying a power of $q$.
\end{remark}

\begin{remark}\label{rem:metcl}
  We view rationals as a dense linear ordering without endpoints. But
  note that if would have a usual metric on $\bbQ$ then the above
  construction gives us $p'$, $q'$, and $w$ such that
  $w(p',q')(\alpha)$ is as close in this metric to the endpoint
  $\max(\dom{p})$ as one wants. We will use this observation later.
\end{remark}

\begin{lemma}\label{liberating-elem-strong}
  For any elementary pair $(p,q)$ and any word $u$ there is a word
  $v$, and partial isomorphisms $p'$ and $q'$ such that the triple
  $(p',q',vu)$ liberates $p$ [liberates $q$] in $(p,q)$ and $|vu| =
  |v| + |u|$ (i.e., no cancellation between $v$ and $u$ happens).
\end{lemma}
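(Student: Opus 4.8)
The plan is to deduce Lemma~\ref{liberating-elem-strong} from its weak form, Lemma~\ref{liberating-elem-weak}, by first enlarging $(p,q)$ so that the prefix $u$ becomes defined, and then feeding the enlarged pair into Lemma~\ref{liberating-elem-weak}. I would treat the ``liberates $p$'' case; the ``liberates $q$'' case is symmetric.

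\textbf{Step 1 (absorbing $u$).} First I would extend the elementary pair $(p,q)$ to an elementary pair $(p_0,q_0)$ with the same common minimum and maximum of the domains, such that $u(p_0,q_0)(c)$ is defined for every $c\in\essen{p}\cup\essen{q}$. Only finitely many new points, the $u$-orbit of $\essen{p}\cup\essen{q}$, must be added to the domains; since $\bbQ$ is a dense linear order without endpoints these points can be placed inside the intervals of monotonicity of $p$ and $q$, so that $p_0,q_0$ stay informative with the same decomposition into monotone intervals and the same outer endpoints, and, by steering the new points clear of fixed points, $\fix{p_0}\cap\fix{q_0}$ still consists only of the two endpoints, so $(p_0,q_0)$ is still elementary. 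Arranging all of this at once --- having $u(p_0,q_0)$ defined while keeping informativeness and elementarity --- is the only genuinely technical point; it is routine from the density of $\bbQ$, but it is where care is needed, and I expect it to be the main obstacle.

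\textbf{Step 2 (the key observation).} Let $A:=u(p_0,q_0)(\essen{p}\cup\essen{q})$, a finite set. Each point of $A$ is the image of some point under a power of $p_0$ or of $q_0$, hence lies in $\dom{p_0}\cup\ran{p_0}\cup\dom{q_0}\cup\ran{q_0}$; and it lies strictly between the common minimum and maximum of the domains of $p_0,q_0$ because these maps are order preserving and fix those endpoints. Therefore $A\subseteq\essen{p_0}\cup\essen{q_0}$. Also, since $u(p_0,q_0)$ is order preserving, $\min A=u(p_0,q_0)(\alpha)$ where $\alpha:=\min(\essen{p}\cup\essen{q})$, and $\min A\geq\min(\essen{p_0}\cup\essen{q_0})$.

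\textbf{Step 3 (invoking the weak lemma and assembling).} Now I would apply Lemma~\ref{liberating-elem-weak} to the elementary pair $(p_0,q_0)$, using Remark~\ref{fg-word-sym}: run the construction starting its ``push to the right'' from a power of $q$ if $u$ begins with a power of $s$, and from a power of $p$ otherwise (in particular when $u$ is empty). This produces extensions $p',q'$ of $p_0,q_0$ and a reduced word $v=t^nv'$, $n\neq 0$, which liberates $p_0$ in $(p_0,q_0)$ and whose last letter is a power of whichever of $s,t$ does not match the first letter of $u$, so that $vu$ is reduced and $|vu|=|v|+|u|$. Since $p',q'$ extend $p_0,q_0$, the map $u(p',q')$ agrees with $u(p_0,q_0)$ on $\essen{p}\cup\essen{q}$; hence for $c\in\essen{p}\cup\essen{q}$ we get $vu(p',q')(c)=v(p',q')\bigl(u(p_0,q_0)(c)\bigr)$, which is defined because $u(p_0,q_0)(c)\in A\subseteq\essen{p_0}\cup\essen{q_0}$, and lies in the interval $J$ furnished by Lemma~\ref{liberating-elem-weak} --- an open interval with $\rigint(J)=\max\dom{q_0}=\max\dom{q}$ on which $q'$ is monotone, $q'$-increasing exactly when $n>0$. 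Moreover $vu(p',q')(\alpha)=v(p',q')(\min A)\geq v(p',q')\bigl(\min(\essen{p_0}\cup\essen{q_0})\bigr)>\max\essen{p'}$ since $v(p',q')$ is order preserving, and $vu$ starts from $v$ and so from the nonzero power $t^n$, while $p',q'$ are informative with the same outer endpoints of their domains as $p,q$. By Definition~\ref{def:elem} these facts say precisely that $(p',q',vu)$ liberates $p$ in $(p,q)$; interchanging the roles of $p$ and $q$, and of $s$ and $t$, handles the ``liberates $q$'' case.
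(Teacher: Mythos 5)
Your proposal is correct and follows essentially the same route as the paper: first extend $(p,q)$ to an elementary pair on which $u$ is defined (with the same outer endpoints), then apply Lemma~\ref{liberating-elem-weak} to the extended pair, using Remark~\ref{fg-word-sym} to prevent cancellation in $vu$. Your Steps 2--3 merely spell out the verification that the resulting triple liberates $p$ in the original pair, which the paper states without detail.
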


\begin{proof}
  First we take extensions $p_1$ and $q_1$ of $p$ and $q$ respectively
  such that $u(p_1, q_1)(c)$ is defined for any $c \in \dom{p} \cup
  \dom{q}$, $(p_1,q_1)$ is elementary and 
  \[\min(\dom{p_1}) = \min(\dom{p}) = \min(\dom{q}) =
  \min(\dom{q_1}),\]
  \[\max(\dom{p_1}) = \max(\dom{p}) = \max(\dom{q}) =
  \max(\dom{q_1}).\]

  By Lemma \ref{liberating-elem-weak} one can find a word $v$ and
  extensions $p'$, $q'$ of $p_1$, $q_1$ such that $(p',q',v)$
  liberates $p_1$ in $(p_1,q_1)$. By Remark \ref{fg-word-sym} we may
  also assume that there is no cancellation in $vu$. Then $(p', q',
  vu)$ liberates $p$ in $(p,q)$.
\end{proof}

\begin{lemma}\label{liberating-preserve}
  Let $(p,q)$ be a piecewise elementary pair and assume a triple
  $(p',q',w)$ liberates $p$ [liberates $q$] in $(p,q)$. Let $u = t^nv$
  [$u = s^mv$] be a reduced word such that $uw$ is irreducible. Then
  there is a triple $(p'',q'',uw)$ that liberates $p$ [liberates $q$]
  in $(p,q)$. Moreover, one can take $p''$ to be an extension of $p'$
  and $q''$ to be an extension of $q'$.
\end{lemma}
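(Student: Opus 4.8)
The plan is first to reduce to the case of an \emph{elementary} pair $(p,q)$. Because liberation of a piecewise elementary pair is defined interval by interval, via the decomposition into elementary pairs on the intervals $I_j$, I would solve the problem separately on each $I_j$ and then glue the resulting partial isomorphisms; this is legitimate since the endpoints of the $I_j$ are common fixed points of $p$ and $q$, and hence of every extension produced, so the pieces agree where they overlap and concatenate into informative partial isomorphisms with the same domain endpoints. So assume $(p,q)$ elementary, $w$ starting from $t^{n_0}$ with $n_0\ne 0$, $u=t^{n}v$ with $n\ne 0$, and $uw$ reduced. With $E=\essen{p}\cup\essen{q}$ and $\alpha=\min E$, the relevant data is: $Y:=\{\,w(p',q')(c):c\in E\,\}$ is a finite subset of the witnessing interval $J$ --- which is $q'$-monotone, with right endpoint $\max(\dom q)=\max(\dom p)$ and orientation prescribed by the sign of $n_0$ --- and $w(p',q')(\alpha)>\max(\essen{p'})$. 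The goal becomes: build $p''\supseteq p'$ and $q''\supseteq q'$, informative and with the same domain endpoints as $p,q$, such that $u(p'',q'')$ is defined on $Y$, carries $Y$ into a $q''$-monotone interval $J'$ with right endpoint $\max(\dom q)$ and orientation prescribed by the sign of $n$, and carries $w(p',q')(\alpha)$ beyond $\max(\essen{p''})$; the remaining clauses of ``$(p'',q'',uw)$ liberates $p$'' are then immediate, since $uw$ begins with a nonzero power of $t$ (as $uw$ is reduced and $n\ne 0$) and every new point lies strictly between $\min(\dom p)$ and $\max(\dom p)$.

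The construction proceeds by induction on the number of syllables of $u$, peeling off its leftmost syllable $\sigma$ and writing $u=\sigma u'$. As $u$ and $uw$ are reduced, so is $u'w$, and the inductive hypothesis --- base case $u=\es$ being trivial --- yields $p_1\supseteq p'$ and $q_1\supseteq q'$ with $(p_1,q_1,u'w)$ liberating $p$ if $u'$ begins with a power of $t$ and liberating $q$ if it begins with a power of $s$. Everything then reduces to a \emph{single-syllable prepend}: if $(p_1,q_1,W)$ liberates $p$ or liberates $q$ in an elementary pair and $\sigma$ is a nonzero power of $t$ with $\sigma W$ reduced, then some $p''\supseteq p_1$ and $q''\supseteq q_1$ make $(p'',q'',\sigma W)$ liberate $p$ (and symmetrically for $\sigma$ a power of $s$). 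For $\sigma=t^{k}$: only $q$ is touched, so $p''=p_1$, and the images $W(p_1,q_1)(E)$ lie in an interval abutting $\max(\dom q)$ which is $q_1$-monotone when $W$ liberates $p$ and $p_1$-monotone when $W$ liberates $q$. In the first alternative, reducedness of $\sigma W$ forces $\sign(k)$ to match the orientation of that $q_1$-monotone interval, so I enlarge $q_1$ inside it to a $q''$ on which $(q'')^{k}$ is defined at all the images and keeps them inside, taking $J'$ to be a subinterval; the image of $\alpha$ already exceeds $\max(\essen{p''})$, and $(q'')^{k}$ pushes it further toward $\max(\dom q)$. In the second alternative, using finiteness of the configuration I carve a fresh subinterval $(\delta,\max(\dom q))$ disjoint from $\dom{p_1}\cup\ran{p_1}\cup\dom{q_1}\cup\ran{q_1}$, extend $q_1$ to a $q''$ that is monotone of orientation $\sign(k)$ there and transports the finitely many images into it by $(q'')^{k}$, and push $(q'')^{k}$ of the image of $\alpha$ past $\max(\essen{p_1})<\max(\dom q)$; then $J'=(\delta,\max(\dom q))$ works. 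The case $\sigma=s^{m}$ is identical with $p,q$ and $s,t$ interchanged, which closes the induction.

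The step I expect to be the main obstacle is the bookkeeping needed to keep $p''$ and $q''$ \emph{informative} through every one of these enlargements: each time one either subdivides an existing monotone interval by a new common fixed point or prolongs one, and the new values --- supplied by density of $\Q$ --- must be chosen so that no domain point's image straddles a newly introduced fixed point, preserving the decomposition into monotone intervals. Closely related is the risk, when transporting the current images by a prescribed power $(q'')^{k}$ or $(p'')^{m}$ into the interval abutting $\max(\dom q)$, of running into a ``barrier'' --- a fixed point or an oppositely oriented monotone stretch of the partial isomorphism built so far; I would defuse this by routing the $s$-powers through the region to the right of $\max(\essen{p'})$, on which $p'$ is undefined and hence freely extendable with either orientation, and by invoking the reducedness hypothesis to exclude the bad orientation for the $t$-power adjacent to $w$. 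Once these points are settled, the triple $(p'',q'',uw)$ produced at the end of the induction liberates $p$ in $(p,q)$ and extends $(p',q')$, as required.
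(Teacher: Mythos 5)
Your overall route is the paper's: reduce to a single elementary pair, induct on the syllables of $u$, and handle a one-syllable prepend in two cases --- same letter as the leading syllable of the current word (sign forced by reducedness, extend inside the existing monotone interval $J$) versus the other letter (use the untouched region near $\max(\dom{q})$). Your first alternative is correct. The gap is in your second alternative (prepending $t^{k}$ to a word $W$ that liberates $q$; the same defect recurs symmetrically for $s^{m}$ prepended to a $p$-liberating word). You carve the fresh interval $(\delta,\max(\dom{q}))$ \emph{disjoint from} $\dom{p_1}\cup\ran{p_1}\cup\dom{q_1}\cup\ran{q_1}$ and then ask $(q'')^{k}$ to transport the points of $W(p_1,q_1)(\essen{p}\cup\essen{q})$ into it. But those image points lie in $\dom{p_1}\cup\ran{p_1}$ (the leftmost letter of $W$ is a power of $s$, so it acts last), hence strictly below $\delta$; and for $J'=(\delta,\max(\dom{q}))$ to be $q''$-monotone in the sense of the paper's definition its left endpoint $\delta$ must be a \emph{fixed point} of $q''$. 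An order-preserving map fixing $\delta$ can never move a point from below $\delta$ to above it, so no power of $q''$ can carry the images into $J'$: as written, either the transport is impossible (if you fix $\delta$) or $J'$ fails clause \eqref{def:elem:monot} of Definition \ref{def:elem} (if you don't), so the constructed triple does not liberate $p$.

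The missing ingredient is precisely clause (v) of the liberation you start from: since $W(p_1,q_1)(\min(\essen{p}\cup\essen{q}))>\max(\essen{q_1})$ and $W(p_1,q_1)$ is order preserving on $\essen{p}\cup\essen{q}$, \emph{all} the images already lie in the $q_1$-free zone $(\max(\essen{q_1}),\max(\dom{q}))$. So the new fixed point $\delta$ must be inserted \emph{below} the images but above $\max(\essen{q_1})$, i.e.\ $\max(\essen{q_1})<\delta<\min\bigl(W(p_1,q_1)(\essen{p}\cup\essen{q})\bigr)$; then declare $q''(\delta)=\delta$ and define $q''$ on the images and their iterates inside $(\delta,\max(\dom{q}))$ with orientation $\sign(k)$, pushing the image of $\alpha$ past $\max(\essen{p_1})$. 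With this placement $J'=(\delta,\max(\dom{q}))$ is genuinely $q''$-monotone, contains the new images, and all clauses hold --- this is exactly what the paper's second case does (with the roles of $s$ and $t$ swapped), and it is the only place where clause (v) of the hypothesis is actually used. Your closing paragraph's remark about ``routing through the region to the right of $\max(\essen{p'})$'' is the right instinct for the symmetric case, but it contradicts the explicit construction you gave, which places the fresh interval to the right of the very points that must enter it.
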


\begin{proof}
  By the definition of liberation for piecewise elementary pairs it is
  enough to prove the statement for elementary triples only. So assume
  $(p,q)$ is elementary. Since $w$ liberates $p$ in $(p,q)$ it has to
  start with a non-zero power $l$ of $t$, i.e, $w=t^l*$. We prove the
  statement by induction on $|u|$. If $u$ is empty the statement is
  trivial. Now consider the inductive step. Either $u=*t^k$ and sign
  of $k$ matches the sign of $l$ (because $uw$ has to be reduced by
  assumptions) or $u = *s^k$ with $k \ne 0$. In the former case extend
  $q'$ to $q'_1$ in such a way that $(t^kw)(p',q'_1)(c)$ is defined
  for $c \in \essen{p} \cup \essen{q}$, then $(p',q_1',t^kw)$ will be
  a $p$ liberating tuple by the item \eqref{def:elem:monot} of the
  definition of liberation. In the second case we can find $p'_1$ such
  that $(p'_1,q',s^kw)$ liberates $q'$ in $(p',q')$ by taking $p'_1$
  such that $(s^kw)(p'_1,q')(c) > \max(\essen{p'} \cup \essen{q'})$ for any
  $c \in \essen{p'} \cup \essen{q'}$. This proves the step of
  induction and the lemma.
\end{proof}

\begin{lemma}\label{liberating-general}
  Let $(p,q)$ be a piecewise elementary pair and $u \in F(s,t)$. Then
  there is a triple $(p',q',w)$ that liberates $p$ [liberates $q$] in
  $(p,q)$ and such that $w=vu$ is reduced.
\end{lemma}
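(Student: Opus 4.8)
The plan is to reduce to the elementary case, which is Lemma~\ref{liberating-elem-strong}, and then to glue the elementary pieces together with the help of Lemma~\ref{liberating-preserve}. Write $\fix{p}\cap\fix{q}=\{a_0<\dots<a_n\}$ and $I_j=[a_j,a_{j+1}]$; as recorded before Lemma~\ref{pelem-dense}, each $(p|_{I_j},q|_{I_j})$ is elementary. These canonical pieces are also the only ones that matter: if $(p|_I,q|_I)$ is elementary for some interval $I$, then informativeness forces $\min\dom{p|_I}$ and $\max\dom{p|_I}$ to lie in $\fix{p}\cap\fix{q}$, and the cardinality clause in the definition of an elementary pair forces them to be consecutive, so that $(p|_I,q|_I)$ coincides with $(p|_{I_j},q|_{I_j})$ for some $j$. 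Hence it suffices to produce one reduced word $w=vu$ and extensions $p',q'$ of $p,q$, with the minima and maxima of the domains unchanged, such that $(p'|_{I_j},q'|_{I_j},w)$ liberates $p|_{I_j}$ in $(p|_{I_j},q|_{I_j})$ for every $j=0,\dots,n-1$.

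I would build $w$ by prepending a block for one piece at a time. Put $w_0=u$. Given $w_j$, apply Lemma~\ref{liberating-elem-strong} to the elementary pair $(p|_{I_j},q|_{I_j})$ with $w_j$ playing the role of ``$u$'': this yields a word $v_{j+1}$ and partial isomorphisms $p_j',q_j'$ extending $p|_{I_j},q|_{I_j}$ such that $(p_j',q_j',v_{j+1}w_j)$ liberates $p|_{I_j}$ and $v_{j+1}w_j$ is reduced. Set $w_{j+1}=v_{j+1}w_j$, and after $n$ steps put $w=w_n=v_n\cdots v_1u$. Since no cancellation was introduced at any step, $w$ is reduced and has the form $vu$ with $v=v_n\cdots v_1$. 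Moreover each block $v_i$ begins with a non-zero power of $t$: indeed $w_i=v_i\cdots v_1u$ liberates $p|_{I_{i-1}}$, and so it starts with such a power by item~\eqref{def:elem:wform} of Definition~\ref{def:elem}.

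It remains to upgrade the earlier pieces from $w_{j+1}$ to the full word $w$. Fix $j$ with $j<n-1$. Then $w=(v_n\cdots v_{j+2})\,w_{j+1}$, the left factor begins with a non-zero power of $t$ (namely the first syllable of $v_n$), and $w$ is reduced; therefore Lemma~\ref{liberating-preserve} applies and yields extensions $p_j''\supseteq p_j'$ and $q_j''\supseteq q_j'$ with $(p_j'',q_j'',w)$ liberating $p|_{I_j}$ in $(p|_{I_j},q|_{I_j})$; for $j=n-1$ just take $p_j''=p_j'$ and $q_j''=q_j'$. Each extension produced by Lemmas~\ref{liberating-elem-strong} and~\ref{liberating-preserve} only adds orbit points of essential points, and since $a_j$ and $a_{j+1}$ are fixed by both maps the dynamics stays inside $[a_j,a_{j+1}]$; hence $\dom{p_j''},\dom{q_j''}\subseteq[a_j,a_{j+1}]$, with minimum $a_j$ and maximum $a_{j+1}$. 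Consequently $p'=\bigcup_{j<n}p_j''$ and $q'=\bigcup_{j<n}q_j''$ are well-defined partial isomorphisms (consecutive pieces meet only at the common fixed point $a_{j+1}$, where every map is the identity), they extend $p$ and $q$ without changing the minimum $a_0$ or maximum $a_n$ of the domains, and $(p',q',w)$ liberates $p$ in $(p,q)$ by the first paragraph. Interchanging the roles of $p$ and $q$ and of $s$ and $t$ gives the statement for liberating $q$.

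The one genuine obstacle is that Lemma~\ref{liberating-elem-strong} supplies, for each elementary piece, a liberating word of its own choosing, while the piecewise notion asks for a single word common to all pieces. This is precisely the role of Lemma~\ref{liberating-preserve}, and the order of operations above --- always prepending new blocks on the left --- is arranged so that the block added at each stage starts with a power of $t$, which is exactly the hypothesis letting Lemma~\ref{liberating-preserve} carry forward the liberation already secured on the pieces handled earlier. The only bookkeeping to be careful about is that every extension stays inside the relevant subinterval $[a_j,a_{j+1}]$, so that the union is a genuine single-valued, order-preserving partial isomorphism; this holds because the endpoints of the subinterval are common fixed points.
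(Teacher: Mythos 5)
Your proof is correct and follows essentially the same route as the paper: decompose into the canonical elementary components $I_j$, build the word by repeatedly invoking Lemma~\ref{liberating-elem-strong} with the accumulated word playing the role of $u$, and use Lemma~\ref{liberating-preserve} to carry the liberation of earlier components forward under prepending. The only difference is organizational --- the paper does the carry-forward step inline within a single induction, whereas you build the entire word first and then upgrade all earlier pieces in a second pass --- and your observation that the canonical $I_j$'s are the \emph{only} intervals on which the restriction is elementary is a nice explicit justification that the paper leaves implicit.
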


\begin{proof}
  We prove the statement by induction on the number of elementary
  components of $(p,q)$. Lemma \ref{liberating-elem-strong} covers the
  base of induction. Assume we have proved the lemma for $r$-many
  elementary components and inductively constructed a triple
  $(\bar{p}_r,\bar{q}_r,w_r)$ that liberates $p_r$ in $(p_r,q_r)$,
  where $p_r$ and $q_r$ are restrictions of $p$ and $q$ onto the
  first $r$ many elementary components. Consider the $r+1$ elementary
  component of $(p,q)$. By Lemma \ref{liberating-preserve} and by the
  base of induction there is a word $w_{r+1} = v_{r+1}w_r$ and
  extensions $\bar{p}_{r+1}$, $\bar{q}_{r+1}$ of $\bar{p}_r$ and
  $\bar{q}_r$ such that $(\bar{p}_{r+1},\bar{q}_{r+1},w_{r+1})$
  liberates $p_{r+1}$ in $(p_{r+1},q_{r+1})$. By remark
  \ref{fg-word-sym} we may assume that $v_{r+1}w_r$ is reduced.  This
  proves the step of induction and hence the lemma.
\end{proof}

\begin{lemma}\label{lem:density-words}
  For any pair $(p,q)$ of partial isomorphisms and any word $u \in
  F(s,t)$ there are extensions $p'$ and $q'$ of $p$ and $q$
  respectively and a reduced word $w=vu$ such that $w(p',q')(c) = c$
  for any $c \in \dom{p} \cup \dom{q}$.
\end{lemma}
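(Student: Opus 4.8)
The plan is to argue by a ``there and back'' construction. First I would extend $p$ and $q$ to partial isomorphisms $p_1\supseteq p$ and $q_1\supseteq q$ for which $u(p_1,q_1)(c)$ is defined for every $c$ in the finite set $A:=\dom p\cup\dom q$; this is routine, adding at each stage the next point of an orbit by the extension property of $\bbQ$. Enlarging the domains slightly, I may assume the image $B:=u(p_1,q_1)(A)$ is contained in $\dom p_1\cup\dom q_1$, and I set $\phi:=u(p_1,q_1)|_A$, a partial isomorphism from $A$ onto $B$. Since any further extensions $p'\supseteq p_1$, $q'\supseteq q_1$ still satisfy $u(p',q')|_A=\phi$, it is enough to find a reduced word $v$ with $vu$ reduced, together with extensions $p'\supseteq p_1$, $q'\supseteq q_1$, such that $v(p',q')|_B=\phi^{-1}$: then $w:=vu$ does the job, because $w(p',q')(c)=v(p',q')\big(\phi(c)\big)=c$ for all $c\in A$.

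Thus the problem reduces to realizing a prescribed partial isomorphism — namely $\phi^{-1}$, defined on $B$ — by a word whose reduced form is $vu$. I would obtain $v$ by a zigzag construction in the spirit of Lemmas \ref{liberating-elem-weak}--\ref{liberating-general}: extending $p_1$ and $q_1$ further step by step, alternately applying suitable powers of the current extensions of $p$ and $q$ and using the extension property of $\bbQ$ to place each newly created orbit point, I would first push $B$ to the right into a ``fresh'' region, disjoint from all current domains and ranges and spread far apart, and then bring it back down, with the last step mapping it exactly onto $A$ in the order dictated by $\phi$. To keep $vu$ cancellation-free I would fix the last letter of $v$ in advance (a power of $s$ when $u$ begins with a power of $t$, and symmetrically otherwise) and carry it through the construction exactly as in the proof of Lemma \ref{liberating-preserve}; Remark \ref{fg-word-sym} allows starting the first move with whichever generator is convenient, and Remark \ref{rem:metcl} is the reason the ``push to the right'' step can be made to land wherever we want.

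The step I expect to be the main obstacle is the mere \emph{existence} of a word realizing $\phi^{-1}$, because a word acts on all of $B$ at once and the points cannot be moved one at a time. Two things must be watched. First, fixed points: a point of $B$ lying in $\fix{p'}\cap\fix{q'}$ could never be moved, so during the construction of $p_1,q_1$ one must avoid enlarging $\fix{p_1}\cap\fix{q_1}$ beyond $\fix p\cap\fix q$; chasing an orbit backward through $u$ (a common fixed point of informative extensions stays fixed) then shows that $\phi$ maps $A\cap\fix p\cap\fix q$ identically onto $B\cap\fix{p_1}\cap\fix{q_1}$, so $\phi^{-1}$ is forced to be the identity precisely where $B$ is pinned down, and there is no genuine obstruction. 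Second, once $B$ has been moved to its intermediate fresh position one must be able to hit $A$ exactly by a single further application of a generator; this is where the ``spread out and fresh'' condition is used, together with, if necessary, one extra auxiliary move so that the final generator reaches $A$ from a side on which there is still room. Granting this bookkeeping, the zigzag produces the desired $v$ and the extensions $p',q'$, and $w=vu$ is the required reduced word.
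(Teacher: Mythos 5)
Your outward phase --- pushing to a fresh region via a zigzag in the spirit of Lemmas \ref{liberating-elem-weak}--\ref{liberating-general} --- is also how the paper begins (via Lemma \ref{liberating-general}). The plans diverge at the return, and that is where there is a genuine gap. You reduce to realizing $\phi^{-1}$ on $B$ by a separate word $v$ whose last letter carries a pushed-out, fresh copy of $B$ ``exactly onto $A$''. But the final generator is not free on $A$: if that last letter is, say, $s^{-1}$, then for $a\in A\cap\dom p$ the value $p'(a)=p(a)$ is already determined and lies among the old points, so the penultimate position of the orbit of $a$ \emph{cannot} be fresh; if instead the last letter is a positive power of $s$, then for $a\in A\cap\ran p$ the preimage under $p'$ is already determined, so one cannot freely send a fresh point onto $a$; analogous constraints hold for $q$. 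Chasing these constraints forces the last several positions to retrace the orbit structure of $p$ and $q$ on $A$ exactly, which contradicts your ``spread out and fresh'' hypothesis and is the part you leave open.

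The paper avoids designing a return word at all: it uses a conjugate of a single generator. Lemma \ref{liberating-general} yields extensions $\bar p, \bar q$ and a reduced word $vu$ (beginning with a nonzero power of $t$) liberating $p$ in $(p,q)$, so that $vu(\bar p,\bar q)(\essen{p}\cup\essen{q})$ lies strictly to the right of $\max(\essen{\bar p})$ and hence outside $\dom{\bar p}\cup\ran{\bar p}$. One extends $\bar p$ to $p'$ by declaring $p'$ to be the identity on that image set, sets $q'=\bar q$, and takes
\[
w \;=\; u^{-1}v^{-1}\,s\,vu .
\]
For each $c\in\dom p\cup\dom q$ the outward journey $vu(p',q')$ sends $c$ to a point fixed by $p'$, where $s$ acts trivially, and $u^{-1}v^{-1}$ then retraces the journey, so $w(p',q')(c)=c$. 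The word is reduced and has the required form $v''u$ with $v''=u^{-1}v^{-1}sv$: the inner $s$ is flanked on both sides by powers of $t$ because $vu$ begins with $t^{n}$, and $u^{-1}v^{-1}$ is reduced since it is the inverse of the reduced word $vu$. This ``go out along $vu$, apply $s$ where it acts as the identity, come back along $(vu)^{-1}$'' move is the key idea your sketch is missing; it requires no consistency bookkeeping at all, because the return path is forced by the group structure rather than constructed.
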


\begin{proof}
  By Lemma \ref{pelem-dense} it is enough to prove the statement for a
  piecewise elementary pair $(p,q)$. By Lemma \ref{liberating-general}
  we can find extensions $\bar{p}$, $\bar{q}$ and a word $v$ such that
  $(\bar{p}, \bar{q}, vu)$ liberates $p$ in $(p,q)$. By the definition
  of liberation we can now extend $\bar{p}$ to $p'$ by declaring
  \[p'(c) = c,\ \textrm{for any $c \in vu(\bar{p}, \bar{q})(\essen{p}
    \cup \essen{q})$}.\] 
  Now set
  $q' = \bar{q}$ and $w = u^{-1}v^{-1}svu$. Then $w(p',q')(c) = c$ for
  any $c \in \dom{p} \cup \dom{q}$.
\end{proof}

\begin{lemma}\label{main-lemma}
  Fix a sequence $\{u_k\}$ of reduced words. For a generic $(f,g) \in
  G \times G$ there is a sequence of reduced words $w_k = v_ku_k$ such
  that $w_k(f,g) \ra \id$.
\end{lemma}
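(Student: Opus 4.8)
The plan is to get $w_k(f,g)\to\id$ by a Baire-category argument: for each $k$ define the set
\[
D_k=\bigl\{(f,g)\in G\times G:\ \exists\,\text{reduced }w=vu_k\text{ with }|w|\ge k\text{ and }w(f,g)|_{F_k}=\id|_{F_k}\bigr\},
\]
where $F_0\subseteq F_1\subseteq\cdots$ is a fixed exhaustion of $\Q$ by finite sets. Intersecting a generic choice of such words over all $k$ will force $w_k(f,g)$ to fix larger and larger finite sets; since the topology on $G$ is generated by the $U(p)$, agreement with $\id$ on $F_k$ means $w_k(f,g)\in U(\id|_{F_k})$, and these basic neighbourhoods of $\id$ shrink to $\{\id\}$, so $w_k(f,g)\to\id$. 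The crux is therefore to show each $D_k$ is \emph{dense} (openness is automatic, since the conditions $w(f,g)(c)=c$ for $c\in F_k$ and the fixed finite word $w$ are open in $(f,g)$).

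For density of $D_k$, I would start from an arbitrary basic open set, which by Lemma~\ref{pelem-dense} contains a box $U(p)\times U(q)$ with $(p,q)$ piecewise elementary; after enlarging $p,q$ I may also assume $F_k\subseteq\dom p\cap\dom q$ (adding the missing points, keeping the pair piecewise elementary and keeping $\min,\max$ fixed). Now apply Lemma~\ref{lem:density-words} with the word $u_k$: it produces extensions $p'\supseteq p$, $q'\supseteq q$ and a reduced word $w=v u_k$ with $w(p',q')(c)=c$ for every $c\in\dom p\cup\dom q\supseteq F_k$. Any $(f,g)\in U(p')\times U(q')\subseteq U(p)\times U(q)$ then satisfies $w(f,g)|_{F_k}=\id|_{F_k}$. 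To meet the length requirement $|w|\ge k$, observe that the word $v$ produced in Lemma~\ref{lem:density-words} is of the form $u_k^{-1}v'^{-1}s v' u_k$ where $v'$ comes from a liberation, so $w=u_k^{-1}v'^{-1}s v' u_k$; if this happens to be too short, I can instead iterate the construction (or feed a longer input word into Lemma~\ref{liberating-general}, whose output $w=vu$ has $|v|$ under our control) to guarantee $|w_k|\ge k$ while keeping $w_k$ of the prescribed form $v_k u_k$ and still fixing $F_k$. Hence $U(p')\times U(q')\subseteq D_k$, proving $D_k$ dense.

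Finally, set $\mathcal G=\bigcap_k D_k$, a dense $G_\delta$, hence comeager in $G\times G$. For $(f,g)\in\mathcal G$ pick for each $k$ a witness $w_k=v_ku_k$ with $|w_k|\ge k$ and $w_k(f,g)|_{F_k}=\id|_{F_k}$; then for every $c\in\Q$ we have $c\in F_k$ for all large $k$, so $w_k(f,g)(c)=c$ eventually, i.e.\ $w_k(f,g)\to\id$ pointwise, which is exactly convergence in $G$.

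The step I expect to be the main obstacle is the bookkeeping that simultaneously (a) makes $w_k$ genuinely end in the prescribed word $u_k$ with no cancellation, (b) forces $|w_k|\ge k$, and (c) keeps $w_k(f,g)$ equal to the identity on the prescribed finite set $F_k$ rather than merely on $\dom p\cup\dom q$ for whatever $p,q$ the density argument handed us; reconciling the fixed target set $F_k$ with the freedom to enlarge domains is where one has to be careful, but Lemmas~\ref{pelem-dense}, \ref{liberating-general}, and \ref{lem:density-words} are precisely tailored to push it through.
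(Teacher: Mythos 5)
Your proof is correct and is essentially the paper's own argument: a Baire-category argument with dense open sets encoding ``there exists a reduced word $w=vu_k$ fixing a prescribed finite set,'' with density supplied by Lemma~\ref{lem:density-words}. The only cosmetic differences are that you use a single-indexed family $D_k$ with a growing exhaustion $F_k$ where the paper uses a doubly-indexed family $B_n^k$ (equivalent by re-indexing), and your extra requirement $|w|\ge k$ is superfluous -- the conclusion $w_k(f,g)\to\id$ already follows from $w_k(f,g)|_{F_k}=\id|_{F_k}$ alone, and your closing worry about the bookkeeping is already handled by the hypotheses of Lemma~\ref{lem:density-words}, which applies to an arbitrary pair of partial isomorphisms after you enlarge the domains to cover $F_k$.
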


\begin{proof}
  Take an enumeration $\{c_i\} =
  \Q$ of the rationals. Let
  \[B_n^k = \{(f,g) \in G \times G : \exists w = vu_k\ \textrm{reduced
    and}\ w(f,g)(c_i) = c_i\ \textrm{for}\ 0\le i\le n \}.\] We claim
  that each $B_n^k$ is dense and open. Indeed, assume for a certain
  $n$ one has $(f,g) \in B_n^k$. This is witnessed by a word $w$. Set
  \[D = \cup_{i=0}^n \orb{\it w(f,g)}{c_i}\] and let $p = f|_D$, $q =
  g|_D$. Then $(f,g) \in U(p) \times U(q) \subseteq B_n^k$ and so
  $B_n^k$ is open. Density follows from Lemma \ref{lem:density-words}.

  Now by Baire theorem $\cap_{n, k} B_n^k$ is a dense
  $G_\delta$. The lemma follows.
\end{proof}

\begin{theorem}\label{thm:rationals}
  Each two-dimensional topological similarity class in $G$ is meager.
\end{theorem}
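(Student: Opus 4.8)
The plan is to establish a zero--one law for topological similarity classes and then rule out a comeager class by a two-track back-and-forth built from the liberation lemmas.

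\emph{Step 1: the class has the Baire property.} Let $\mathcal{C}$ be the similarity class of a pair $(f_0,g_0)$. Unwinding the definition, $(f,g)\in\mathcal{C}$ iff (a) for every reduced word $w$ we have $w(f,g)=\id$ exactly when $w(f_0,g_0)=\id$, and (b) the abstract isomorphism $w(f,g)\mapsto w(f_0,g_0)$ and its inverse are continuous. Condition (a) is a Boolean combination of the clopen sets $\{(f,g):w(f,g)(c)=c\}$, and condition (b) is of the form ``for every finite $A\subseteq\Q$ there is a finite $B\subseteq\Q$ such that, for every $w$, $w(f,g)|_B=\id$ implies $w(f_0,g_0)|_A=\id$'' (and symmetrically), hence again obtained from clopen sets by countably many quantifiers over the countable index sets $\Q$ and $F(s,t)$. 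So $\mathcal{C}$ is Borel, and in particular it has the Baire property; the same is then true of its complement.

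\emph{Step 2: a zero--one law.} I claim that a similarity class with the Baire property is either meager or comeager. Since $E_G^2$ is finer than $E_{TS}^2$, diagonal conjugation $\Phi_h\colon(f,g)\mapsto(hfh^{-1},hgh^{-1})$ maps $\mathcal{C}$ onto itself for every $h\in G$, and each $\Phi_h$ is a self-homeomorphism of $G\times G$. If $\mathcal{C}$ were non-meager, then, being Baire-measurable, it would be comeager in some basic open set $U(p)\times U(q)$, hence (applying $\Phi_h$) comeager in $U(hph^{-1})\times U(hqh^{-1})$ for all $h$. Given any other basic open $U(p')\times U(q')$, choose $h\in G$ carrying the finite set $\dom{p}\cup\ran{p}\cup\dom{q}\cup\ran{q}$ into an interval lying entirely to the left of $\dom{p'}\cup\ran{p'}\cup\dom{q'}\cup\ran{q'}$; then $hph^{-1}\cup p'$ and $hqh^{-1}\cup q'$ are partial isomorphisms, so $\big(U(hph^{-1})\times U(hqh^{-1})\big)\cap\big(U(p')\times U(q')\big)\ne\varnothing$. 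As $\mathcal{C}$ is comeager in the first factor, $(G\times G)\setminus\mathcal{C}$ is not comeager in the second; since this holds for every basic open set and $(G\times G)\setminus\mathcal{C}$ is Baire-measurable, $(G\times G)\setminus\mathcal{C}$ is meager, i.e. $\mathcal{C}$ is comeager. Thus it suffices to prove that no similarity class is comeager.

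\emph{Step 3: no similarity class is comeager.} Suppose $\mathcal{C}$ were comeager and fix dense open sets $D_n$ with $\bigcap_n D_n\subseteq\mathcal{C}$. Running a two-track version of the construction in Lemma~\ref{main-lemma}, I will build simultaneously two pairs $(f_1,g_1)$ and $(f_2,g_2)$ lying in $\bigcap_n D_n$ (hence both in $\mathcal{C}$, hence topologically similar), together with reduced words $z_k$, so that $z_k(f_1,g_1)\to\id$ while $z_k(f_2,g_2)(c)\ne c$ for all $k$, where $c$ is a rational fixed in advance with $f_2(c)\ne c$. Fix an enumeration $\{c_i\}$ of $\Q$. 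At stage $k$ one first extends the finite data describing $(f_1,g_1)$ so that $\{c_0,\dots,c_k\}$ is in its domain, and then applies Lemma~\ref{lem:density-words} with the prescribed suffix $u_k=s^{N_k}$, for $N_k$ larger than the length of the current forward $f_2$-orbit of $c$; this produces extensions of $(f_1,g_1)$ and a reduced word $z_k=v_ku_k$ with $z_k(f_1,g_1)(c_i)=c_i$ for all $i\le k$. On the other track one evaluates $z_k(f_2,g_2)(c)$ letter by letter: because $N_k$ is large and $c$ is not on a finite $f_2$-orbit, the computation runs off the currently defined part of $f_2$, and from that point on each further letter is applied to a point outside the current domains, so one may extend $f_2,g_2$ consistently by fresh rationals; the terminal value $z_k(f_2,g_2)(c)$ is then fresh, in particular distinct from $c$. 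Finally one extends both pairs so as to enter $D_k$ and so that $c_k$ is caught in all the relevant domains and ranges. In the limit, $(f_1,g_1),(f_2,g_2)\in\bigcap_n D_n\subseteq\mathcal{C}$, so $(f_1,g_1)\,E^2_{TS}\,(f_2,g_2)$; but ``$z_k(\cdot,\cdot)\to\id$'' is preserved by bi-continuous isomorphisms, and it holds for $(f_1,g_1)$ and fails for $(f_2,g_2)$ (the latter does not even fix $c$). This contradiction, together with Step~2, shows that every two-dimensional similarity class is meager.

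The essential difficulty is Step~3: one must keep both pairs generic (inside $\bigcap_n D_n$) while forcing the single sequence $\{z_k\}$ to collapse to the identity along $(f_1,g_1)$ and to move the fixed rational $c$ along $(f_2,g_2)$. What makes the two requirements compatible is precisely the freedom, built into Lemma~\ref{lem:density-words} and the liberation lemmas, to prescribe in advance the suffix $u_k$ of $z_k$: choosing $u_k=s^{N_k}$ decouples the production of $z_k$ (carried out entirely on the $(f_1,g_1)$-side) from the bookkeeping needed on the $(f_2,g_2)$-side, where the large power of $s$ guarantees that the orbit of $c$ escapes into fresh territory before anything can be pinned down.
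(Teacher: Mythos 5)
Your argument is correct, but it takes a genuinely different route from the paper's. The paper works directly with a non-meager class: by Lemma~\ref{main-lemma} (applied with suffixes $u_k=t^ks^k$) such a class must meet the comeager set of pairs converging to the identity along some sequence $w_n=v_nt^ns^n$, hence---since convergence along a fixed sequence of words is invariant under the bi-continuous isomorphism---the whole class is contained in the set of pairs converging along that one fixed sequence; a second Baire-category computation (the sets $C_n$, shown dense open off each fixed-point set $F_a$) proves that this containing set is meager, giving the contradiction. You instead first prove the similarity class is Borel and invoke a topological zero--one law for the diagonal conjugation action (your shift-everything-to-the-left transitivity argument is fine), reducing the theorem to the non-existence of a comeager class, and then you replace the paper's second category argument by a two-track fusion inside $\bigcap_n D_n$: one pair collapses along the words $z_k$ supplied by Lemma~\ref{lem:density-words} with prescribed suffixes $s^{N_k}$, while for the other pair the freshness of the evaluation at the pre-chosen non-fixed point $c$ keeps $z_k(f_2,g_2)(c)\ne c$; this is the same combinatorial freedom the paper exploits to prove density of its sets $C_n$, just packaged as a single construction instead of a Baire-category argument. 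What the paper's route buys is economy of hypotheses: it never needs the Baire property or Borelness of similarity classes, nor the transitivity/zero--one law, since a non-meager set automatically meets a comeager one. What your route buys is a very concrete contradiction (two explicitly constructed members of the class that cannot be topologically similar) plus the independently useful observation that similarity classes are Borel. Two small points to tighten: the contradiction at the end of Step~3 is that $z_k(f_2,g_2)(c)\ne c$ for all $k$ is incompatible with $z_k(f_2,g_2)\to\id$ (not that $(f_2,g_2)$ ``does not fix $c$''), and in the track-2 extensions you should say explicitly that density of $\Q$ lets you extend a finite partial isomorphism at a point outside its domain (or range) by a fresh image (or preimage) in the correct order gap, avoiding $c$ and the finitely many points already used.
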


\begin{proof} Assume towards the contradiction that there is a pair
  $(f_1,g_1) \in G \times G$ that has a non-meager class of topological
  similarity. Then by Lemma \ref{main-lemma} there must be a sequence
  $w_n = v_nt^ns^n$ of reduced words such that $(f_1,g_1)$ converges to
  the identity along this sequence (we apply Lemma \ref{main-lemma}
  for the sequence $u_k = t^ks^k$).

  Take and fix $a \in \mathbb{Q}$. Set
  \[F_a = \{(f,g) \in G \times G : \textrm{$f(a) = a = g(a)$}\}.\]
  Let
  \[C_n = \{(x,y) \in G \times G : \exists m > n\ w_m(x,y)(a) \ne
  a\}.\] Then $C_n$ is open and dense in $(G \times G) \setminus
  F_a$. To see density take a basic open set
  $U(p) \times U(q) \subseteq (G \times G) \setminus F_a$ and assume
  $p(a) \ne a$ (the case when $p(a) = a$, but $q(a) \ne a$ is
  similar).
  For some $k > n$ $p^k(a)$ is not in the domain of $p$. Thus there
  are infinitely many values that $f \in U(p)$ may attain at
  $f^{k-1}(a)$, and so (by induction) there are infinitely many
  values that $w_k(f,g)(a)$ may attain for a pair
  $(f,g) \in U(p) \times U(q)$. Hence for some
  $(f,g)$ one has $w_k(f,g)(a) \ne a$. And so
  $C_n$ is dense in $G \times G \setminus F_a$. An application of
  Baire theorem shows that $\cap C_n$ is a dense $G_\delta$ and so for
  a generic $(f,g) \in (G \times G) \setminus F_a$ one has
  $w_n(f,g)(a) \not \ra a$. Since $\cup_a (G \times G)
  \setminus F_a = (G \times G) \setminus \{\id\times\id\}$ we get a
  contradiction with the assumption that $w_n(f_1,g_1) \to \id$ and that
  the class of topological similarity of $(f_1,g_1)$ is non-meager.
\end{proof}

\subsection*{Homeomorphisms of the unit interval.} We now turn to the
group of homeomorphisms of the unit interval. This is a Polish group
in the natural topology, given by the basic open sets:
\[U(f; a_1, \ldots, a_n; \eps) = \{g \in \homeo{[0,1]} : |g(a_i) -
f(a_i)| < \eps\}.\] We may write this neighborhood as $U(p; \eps)$,
where $p = f|_{\{a_0, \ldots, a_n\}}$ is a partial isomorphism. Since
$\Q$ is dense in $[0,1]$, we may assume that $p$ is a partial isomorphism
of the rationals: this will give us a base of open sets.

This group $\homeo{[0,1]}$ has a normal subgroup of index $2$, namely
the subgroup $\homeop{[0,1]}$ of order preserving homeomorphisms. Let
$H = \homeop{[0,1]}$, then $\aut{\Q} = G$ naturally embeds into $H$
(this embedding is a continuous injective homomorphism, its inverse,
though, is not continuous), and the image of
$G$ under this embedding is dense in $H$.

\begin{theorem}
  Every two-dimensional class of topological similarity in
  $H$ is meager.
\end{theorem}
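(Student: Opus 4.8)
The plan is to reproduce the proof of Theorem~\ref{thm:rationals} inside $H=\homeop{[0,1]}$. This is possible because the basic open sets of $H$ are still indexed by partial isomorphisms of $\Q\cap[0,1]$, namely the sets $U(p;\eps)$; every such $p$ extends to an element of $H$; and if all intermediate values of a word map $w(p,q)$ at a point $c$ lie in $\dom p\cup\dom q$, then $w(f,g)(c)=w(p,q)(c)$ for \emph{every} $f\supseteq p$ and $g\supseteq q$ in $H$. Two preliminary remarks. First, if a two-dimensional similarity class $S\subseteq H^2$ is non-meager then $(\id,\id)\notin S$, since the similarity class of $(\id,\id)$ is the singleton $\{(\id,\id)\}$. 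Second, ``$w_n(f,g)\to\id$ for a fixed sequence of reduced words $w_n$'' is a similarity invariant: the bicontinuous isomorphism $F$ witnessing the similarity of $(f,g)$ and $(f',g')$ satisfies $F(w_n(f,g))=w_n(f',g')$ and is continuous with $F(\id)=\id$, so $w_n(f,g)\to\id$ in $H$ forces $w_n(f',g')\to\id$ in $H$.

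The first step is the analogue of Lemma~\ref{main-lemma} for $H$: for any fixed sequence $\{u_k\}$ of reduced words, the set
\[M=\{(f,g)\in H^2:\ \exists\text{ reduced }w_k=v_ku_k\text{ with }w_k(f,g)\to\id\text{ in }H\}\]
is comeager. Fix an enumeration $\{c_i\}$ of $\Q\cap[0,1]$ and let $B_{n,m}^k=\{(f,g):\exists\text{ reduced }w=vu_k,\ |w(f,g)(c_i)-c_i|<1/m\text{ for }0\le i\le n\}$. Each $B_{n,m}^k$ is open since $(f,g)\mapsto w(f,g)(c_i)$ is continuous on $H^2$, and it is dense: given a basic box $U(p;\eps)\times U(q;\eps)$, enlarge $p,q$ so that $c_0,\dots,c_n$ lie in both domains, apply Lemma~\ref{lem:density-words} to obtain extensions $p',q'$ and a reduced word $w=vu_k$ with $w(p',q')(c_i)=c_i$, and take any $f\supseteq p'$, $g\supseteq q'$ in $H$; then $w(f,g)(c_i)=w(p',q')(c_i)=c_i$ because all intermediate points lie in $\dom p'\cup\dom q'$. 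Baire's theorem applied to $\bigcap_{n,m,k}B_{n,m}^k$, with $w_k$ chosen to witness $B_{k,k}^k$, gives $M$ comeager.

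Now suppose toward a contradiction that a similarity class $S\subseteq H^2$ is non-meager. Applying the previous step with $u_k=t^ks^k$ yields a comeager $M$; pick $(f_0,g_0)\in S\cap M$ and a reduced sequence $w_k=v_kt^ks^k$ with $w_k(f_0,g_0)\to\id$ in $H$, so by the similarity invariance $w_k(f,g)\to\id$ in $H$ for \emph{every} $(f,g)\in S$. Since $(\id,\id)\notin S$, writing $F_a=\{(f,g):f(a)=a=g(a)\}$ for $a\in\Q\cap(0,1)$ we get $S=\bigcup_a\bigl(S\cap(H^2\setminus F_a)\bigr)$, so some $S_a:=S\cap(H^2\setminus F_a)$ is non-meager; fix such an $a$. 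On the open set $H^2\setminus F_a$ the function $\kappa(f,g)=\max(|f(a)-a|,|g(a)-a|)$ is continuous and strictly positive, and since $f(a),g(a)\in(0,1)$ we always have $\kappa(f,g)<\max(a,1-a)$. Put $C_N=\{(f,g)\in H^2\setminus F_a:\exists m\ge N,\ |w_m(f,g)(a)-a|>\kappa(f,g)\}$. Each $C_N$ is open. To see density, take a basic box $U(p;\eps)\times U(q;\eps)\subseteq H^2\setminus F_a$; then no $f\in U(p;\eps)$ fixes $a$ or no $g\in U(q;\eps)$ fixes $a$, and for any chosen $m\ge N$ we may extend $p,q$ along the $w_m$-orbit of $a$ (cf.\ Remark~\ref{rem:metcl}) so as to produce $f'\supseteq p$, $g'\supseteq q$ in $H$ with $w_m(f',g')(a)=b$ for any prescribed $b\in\Q\cap(0,1)$; choosing $b$ close enough to whichever endpoint of $[0,1]$ is farther from $a$ forces $|b-a|>\kappa(f',g')$, so $(f',g')\in C_N$. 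By Baire's theorem $\bigcap_NC_N$ is comeager in $H^2\setminus F_a$, and for every $(f,g)$ in it one has $|w_m(f,g)(a)-a|>\kappa(f,g)>0$ for infinitely many $m$, hence $w_m(f,g)\not\to\id$ in $H$. Therefore $S_a$, on which $w_m(f,g)\to\id$, is disjoint from the comeager set $\bigcap_NC_N$, so $S_a$ is meager — contradicting the choice of $a$.

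I expect the density of the sets $C_N$ to be the only real point of difficulty. Over $\aut{\Q}$ it sufficed to arrange $w_m(f,g)(a)\ne a$, a robust condition in a discrete order; over $H$ the escape of $a$ from its starting position must be \emph{quantitative}, beating the continuously-varying threshold $\kappa(f,g)$, and what makes this possible is the geometry of $[0,1]$: a point $a$ that is not pinned down can be dragged — by exactly the elementary extensions of partial isomorphisms used throughout Section~\ref{sec:autq} — arbitrarily close to an endpoint, while $\kappa(f,g)$ is automatically bounded strictly below $\max(a,1-a)$. One should also check the harmless point that $f'(a)$ and $g'(a)$, hence $\kappa(f',g')$, can be kept independent of the target $b$ by choosing all newly added orbit points distinct from $a$; everything else is the bookkeeping already carried out in the proof of Theorem~\ref{thm:rationals}.
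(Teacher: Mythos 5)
Your first step — re-proving Lemma~\ref{main-lemma} inside $H$ via the open sets $B_{n,m}^k$ with $|w(f,g)(c_i)-c_i|<1/m$ rather than $w(f,g)(c_i)=c_i$, then diagonalizing with $w_k$ witnessing $B_{k,k}^k$ — is correct, and actually makes precise a step the paper passes over rather quickly (``the topology of $H$ is coarser than $G$'' does not by itself transport a comeager set from $G^2$ to $H^2$, since $G^2$ is meager in $H^2$). Likewise the reduction to the slices $H^2\setminus F_a$ and the similarity-invariance of ``$w_k(f,g)\to\id$'' are fine.

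The gap is in the density of $C_N$. The claim that, for a fixed word $w_m$ and a fixed box $U(p;\eps)\times U(q;\eps)$, one can build exact extensions $f'\supseteq p$, $g'\supseteq q$ with $w_m(f',g')(a)=b$ for an \emph{arbitrary} $b\in\Q\cap(0,1)$ is false. If $p$ and $q$ have a common fixed point on each side of $a$, say $l,r\in\fix{p}\cap\fix{q}$ with $l<a<r$, then every $f'\supseteq p$ and $g'\supseteq q$ maps $(l,r)$ into itself, and hence the whole $w_m$-orbit of $a$, and in particular $w_m(f',g')(a)$, is trapped in $(l,r)$; it certainly cannot be driven to a prescribed $b$ near $0$ or $1$. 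This is not a pathological case — it is the generic picture for a piecewise elementary pair with more than one elementary component. What \emph{is} true is only that $w_m(f',g')(a)$ can be pushed close to an endpoint of the smallest such invariant interval, and proving even that for a \emph{fixed} word $w_m=v_mu_m$ is exactly where the liberating machinery is needed: Remark~\ref{rem:metcl} (which you cite) only applies while a liberating word is being built from scratch, while controlling a pre-specified suffix $v_m$ requires Lemma~\ref{liberating-preserve} starting from the knowledge that $u_m$ already liberates $p$ in $(p,q)$. That in turn forces a careful choice of the sequence $u_k$: you take $u_k=t^ks^k$ (as in the $\aut\Q$ case), but $t^ks^k$ does not in general liberate an elementary pair — it does not alternate enough to cross the separate fixed points of $p$ and $q$ — whereas the paper, in its proof for $H$, explicitly chooses $u_k$ so that every piecewise elementary pair is liberated by infinitely many $u_k$.

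There is a secondary, quantitative mismatch. Your threshold $\kappa(f',g')=\max(|f'(a)-a|,|g'(a)-a|)$ can be as large as $\max(a-l,r-a)$, i.e. the distance to the \emph{farther} invariant endpoint, while liberation as defined in the paper only pushes the orbit toward $\max(\dom q)$, giving an escape of about $\min(a-l,r-a)$. Beating $\kappa$ would thus need a two-sided liberation argument. The paper sidesteps this by working inside the open pieces $B_{m,n}$, where a uniform $1/n$-escape already beats the threshold $1/(2n)$, at the cost of first checking $\delta=\min\{|x_m-c|:c\in\fix p\cap\fix q\}>1/n$. So your $\kappa$ is a cleaner quantity than the paper's $B_{m,n}$ decomposition, but it asks for more from the escape argument than the liberating machinery as stated supplies. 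Repairing the proof means: (i) choosing the $u_k$ with the liberating property; (ii) relating $\kappa$ (or a one-sided variant of it) to $\fix p\cap\fix q$; and (iii) pushing the orbit with Lemma~\ref{liberating-preserve} and Remark~\ref{rem:metcl} rather than by freely prescribing its endpoint.
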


\begin{proof}
  We imitate the proof of Theorem \ref{thm:rationals}. Let $\{x_m\}$ be an
  enumeration of the rationals $\Q \cap [0,1]$, then $\{x_m\}$ is dense in
  $[0,1]$. Set
  \[A_{m,n} = \{f \in H : |f(x_m) - x_m| > 1/n\ \textrm{and}\
  |f^{-1}(x_m) - x_m| > 1/n\},\]
  \[B_{m,n} = \{(f,g) \in H \times H : f \in A_{m,n}\ \textrm{or}\ g
  \in A_{m,n}\}.\] Note that $B_{m,n}$ is open for every $m$ and
  $n$. Then $\cup_{m,n} B_{m,n} = H \times H \setminus \{(\id,\id)\}$
  and so it is enough to prove that each two-dimensional class of
  topological similarity is meager in each of $B_{m,n}$. 

  Let $u_k$ be a sequence of words
  such that for every piecewise elementary pair $(p,q)$ (here $p$ and
  $q$ are partial isomorphisms of the rationals, like before) there
  are infinitely many $k$ such that for some $p'_k$, $q'_k$,
  $(p'_k,q'_k,u_k)$ liberates $p$ in $(p,q)$. Then by Lemma
  \ref{main-lemma} for a generic pair $(f,g) \in G \times G$ there is
  a sequence of reduced words $w_k = v_ku_k$ such that $w_k(f,g) \ra
  \id$. This implies that for a generic pair $(f,g) \in H \times H$
  there is a sequence $w_k$ as above (because the topology in $H$ is
  coarser than in $G$). If there is a non-meager
  two-dimensional class of topological similarity then there is a
  sequence of reduced words $\{w_k\} = \{v_ku_k\}$ (for some
  $\{v_k\}$) such that the set of pairs $(f_1,g_1) \in H \times H$
  that converges to the identity along $w_k$ is non-meager.

  Fix now $m,n$ and a sequence of reduced words $w_k = v_ku_k$. Set
  \[C_k = \{(f,g) \in H \times H : \exists K > k\ |w_K(f,g)(x_m) -
  x_m| > 1/2n\}.\] Each $C_k$ is open, we claim that it is also dense
  in $B_{m,n}$. Let $V \subseteq B_{m,n}$ be an open set. Without loss
  of generality we may assume that $V = U(p;\eps_1) \times
  U(q;\eps_2)$, where $p$ and $q$ are partial isomorphisms of the
  rationals. Let 
  \[\delta = \min \{|x_m - c| : c \in \fix{p} \cap \fix{q}\} > 1/n.\] 
  Then there is $K > k$ and $p'$,
  $q'$ such that $(p',q',u_K)$ liberates $p$ in $(p,q)$. 
  Now repeat the proof of Lemma \ref{liberating-preserve} and use
  Remark \ref{rem:metcl} to get $p''$, $q''$ that extend $p'$ and $q'$
  and such that $|w_K(p'',q'')(x_m) - x_m| \ge 1/2\delta$. Hence each
  $C_k$ is dense in $B_{m,n}$. Now by Baire theorem the intersection
  $\cap_k C_k$ is a dense $G_\delta$ in $B_{m,n}$ and thus for any
  specific sequence $w_k$ the set of elements $(f_1,g_1) \in H \times
  H$ that converges to the identity along this sequence is meager in
  $B_{m,n}$. Finally we showed that each two-dimensional topological
  similarity class is meager in $B_{m,n}$ for any $m$, $n$ and so is
  in $H \times H$.
\end{proof}

\section{Extensions of Partial Isometries}
\label{sec:extens-part-isom}

In this section we prove several results, that will be used later,
when dealing with the randomly ordered Urysohn space. But we believe
that some of the theorems below are of independent interest for
understanding the group of isometries of the Urysohn space.

We remind that the Urysohn space $\bbU$ is a complete separable metric
space, that is uniquely characterized by the following properties:
\begin{itemize}
\item Every finite metric space can be isometrically embedded into $\bbU$;
\item $\bbU$ is ultrahomogeneous, that is each partial isometry
  between finite subsets of $\bbU$ extends to a full isometry of $\bbU$.
\end{itemize}
There is a rational counterpart $\bbQU$ of the Urysohn space. It is
called rational Urysohn space. This is
a countable metric space with rational distances, characterized by
similar properties:
\begin{itemize}
\item Every finite metric space with rational distances can be
  isometrically embedded into $\bbQU$;
\item $\bbQU$ is ultrahomogeneous.
\end{itemize}

Groups of isometries $\iso{\bbU}$ and $\iso{\bbQU}$ of these spaces are
Polish groups when endowed with the topology of pointwise convergence
(for this $\bbQU$ is viewed as a discrete topological space). 

\begin{definition}
  Let $(\bA,d)$ be a finite metric space with at least two
  elements. The {\it density\/} of $\bA$ is denoted by $\den(\bA)$ and
  is the minimal distance between two distinct points in $\bA$:
  \[\den(\bA) = \min\{d(x,y) : x, y \in \bA, x \ne y\}.\]
\end{definition}

\begin{definition}
  \label{sec:extens-part-isom-ordmetric}
  An ordered metric space is a triple $(\bA, d, <)$, where $d$ is a
  metric on $\bA$ and $<$ is a linear ordering on $\bA$. 
\end{definition}

\begin{definition}
  A {\it partial isometry} or {\it partial isomorphism} of a metric
  space $\bC$ is an isometry $p : \bA \to \bB$ between {\it finite\/}
  subspaces $\bA$, $\bB \subseteq \bC$. A partial isomorphism of and
  ordered metric space is a partial isometry of the metric space that
  also preserves the ordering on its domain.
\end{definition}

\begin{definition}
  \label{sec:extens-part-isom-perpoint}
  Let $p$ be a partial isometry of a metric space. Then we let
  $\dom{p}$ to denote the domain of $p$ and $\ran{p}$ to denote its
  range. A point $x \in \dom{p}$ is called {\it periodic\/} if there
  is a natural number $n > 0$ such that 
  \[x, p(x), \ldots, p^n(x) \in \dom{p} \quad \textrm{and} \quad p^n(x) = x.\]
  The set of periodic points is denoted by
  $\per{p}$. A point $x \in \dom{p}$ is called {\it fixed\/} if $p(x) =
  x$ and the set of fixed points is denoted by $\fix{p}$.
\end{definition}

In this section we deal mostly with the classical Urysohn space, but
some of the results will be later applied to the randomly ordered
rational Urysohn space. The following proposition will let us do that.

\begin{proposition}
  \label{sec:isom-to-ord}
  Let $\bA$ be a finite ordered metric space, let $p$ be a partial
  isomorphism of $\bA$. Let $\bB$ be a finite metric space (with no
  ordering) and let $q$ be a partial isometry of $\bB$ with $\per{q} =
  \fix{q}$. Suppose that $\bA \subseteq \bB$ as metric spaces and $q$
  extends $p$. If
  \[\forall x \in \dom{q}\ q(x) \in \bA \iff x \in \dom{p}\]
  then there is a linear ordering on $\bB$ that
  extends an ordering on $\bA$ and such that $q$ becomes a partial
  isomorphism of an ordered metric space $\bB$.
\end{proposition}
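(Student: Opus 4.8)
I would argue by induction on $|\bB\setminus\bA|$. When $\bB=\bA$ the statement is immediate: every $x\in\dom{q}$ has $q(x)\in\ran{q}\subseteq\bA$, hence $x\in\dom{p}$ by hypothesis, so $\dom{q}\subseteq\dom{p}$ and therefore $q=p$ (since $q$ extends $p$), which is already a partial isomorphism of the ordered space $\bA$.

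For the inductive step, first unravel the dynamics of $q$. Since $\per{q}=\fix{q}$, the partial bijection $q$ has no nontrivial cycles, so $\dom{q}\cup\ran{q}$ decomposes into fixed points and finite chains $x_0\to x_1\to\cdots\to x_k$ (meaning $q(x_i)=x_{i+1}$, with $x_0\notin\ran{q}$, $x_k\notin\dom{q}$, $k\ge1$). The key observation — and essentially the only place the hypothesis $\{x\in\dom{q}:q(x)\in\bA\}=\dom{p}$ is used — is that the elements of $\bA$ inside each such chain form an \emph{initial} segment $x_0,\dots,x_m$: if $x_{i+1}\in\bA$, then $q(x_i)=x_{i+1}\in\bA$ forces $x_i\in\dom{p}\subseteq\bA$. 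Consequently, any chain meeting $\bB\setminus\bA$ has its last vertex $x_k$ in $\bB\setminus\bA$, and any fixed point lying outside $\bA$ is a one-element chain disjoint from $\bA$.

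Now pick $z\in\bB\setminus\bA$. If $z\notin\dom{q}\cup\ran{q}$, or if $z$ is a fixed point of $q$, then deleting $z$ (legitimate in the fixed-point case because $z\notin\bA\supseteq\dom{p}$) leaves all the hypotheses intact; by induction order $\bB\setminus\{z\}$ extending the ordering of $\bA$ with $q$ monotone, and reinsert $z$ as a new minimum — this alters nothing on $\dom{q}\cup\ran{q}$, so $q$ stays monotone. Otherwise $z$ lies in a chain, and by the observation above we may instead peel off that chain's last vertex $x_k\in\bB\setminus\bA$: set $\bB'=\bB\setminus\{x_k\}$ and $q'=q|_{\dom{q}\setminus\{x_{k-1}\}}$. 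Since $q(x_{k-1})=x_k\notin\bA$ we get $x_{k-1}\notin\dom{p}$, so $q'$ still extends $p$, and $\per{q'}=\fix{q'}$ and the $\bA$-intersection condition are inherited. Induction gives a linear order $<'$ on $\bB'$ extending the ordering of $\bA$ with $q'$ monotone. To reinsert $x_k$, put $L=q'(\{y\in\dom{q'}:y<'x_{k-1}\})$ and $R=q'(\{y\in\dom{q'}:y>'x_{k-1}\})$; since $q'$ is monotone, every element of $L$ precedes every element of $R$, and placing $x_k$ immediately above $\max L$ (or as the new minimum when $L=\es$) disturbs neither the order of $\bA$ nor the order on $\dom{q'}\cup\ran{q'}$ (because $x_k\notin\bA$ and $x_k\notin\dom{q'}\cup\ran{q'}$), while guaranteeing $y<'x_{k-1}\iff q(y)<'x_k$ for every $y\in\dom{q'}$. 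Hence $q=q'\cup\{(x_{k-1},x_k)\}$ is monotone on $\bB$, finishing the induction.

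The delicate point is the reinsertion: the position of $x_k$ must be pinned down precisely enough to keep $q$ monotone, which is exactly why one should peel off end-of-chain vertices (only the single pair $(x_{k-1},x_k)$ then has to be accommodated) and why the initial-segment observation is needed to guarantee such a vertex exists in $\bB\setminus\bA$. A more symmetric, essentially equivalent, alternative is to first build a linear order on $\dom{q}\cup\ran{q}$ making $q$ monotone and agreeing with $\bA$ on the overlap, and then invoke the elementary fact that two linear orders agreeing on the intersection of their domains admit a common linear extension; the one-point induction above merely keeps the bookkeeping lighter.
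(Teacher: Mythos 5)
Your proof is correct, and it takes the same induction on $|\bB\setminus\bA|$ as the paper but runs it in the opposite direction. The paper keeps $\bB$ and $q$ fixed and grows the ordered part: in each step it adjoins either the point $q(x)$ for some $x\in(\dom{q}\cap\bA)\setminus\dom{p}$, whose position is prescribed by an explicitly defined partial order (then extended arbitrarily to a linear one), or a pair $x,q(x)$ with $x\in\dom{q}\setminus\bA$ placed above all of $\bA$. You instead shrink $\bB$, peeling off a terminal vertex of a $q$-chain (or an isolated fixed point) lying in $\bB\setminus\bA$, and after the inductive call reinsert it at the cut determined by your sets $L$ and $R$. The two mechanisms do the same work: in both, the position of the new point is pinned down by the finitely many monotonicity constraints coming from the already-ordered domain points, and the displayed hypothesis enters at the same spot (in the paper, to know $q(x)\notin\bA$ in its Case 1; in yours, to see that the $\bA$-part of each chain is an initial segment, so a removable vertex exists in $\bB\setminus\bA$). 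What your route buys is that the roles of $\per{q}=\fix{q}$ and of the injectivity of $q$ become explicit through the chain decomposition --- note that injectivity is exactly what makes a fixed point a one-element component with no incoming tail, which your decomposition and your ``reinsert $z$ as a new minimum'' step tacitly rely on --- and the reinsertion via $L$ and $R$ is arguably more transparent than the paper's partial-order formula; the cost is the extra bookkeeping of the chain analysis. Neither argument is materially shorter or more general than the other.
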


\begin{proof}
  We prove the statement by induction on $|\bB \setminus \bA|$. If $\bA
  = \bB$ the statement is obvious. For inductive step we consider two
  cases.

  {\bf Case 1.} There is some $x \in \bA$ such that $x \in \dom{q}$
  but $x \not \in \dom{p}$. Then by assumption $q(x) \in \bB \setminus
  \bA$. Extend now a linear ordering on $\bA$ to a partial ordering on
  $\bA \cup \{q(x)\}$ by declaring for $y \in \bA$
  \[q(x) < y \iff \exists z \in \dom{p}\ (p(z) \le y) \& (x < z),\]
  \[y < q(x) \iff \exists z \in \dom{p}\ (y \le p(z)) \& (z < x).\]
  It is straightforward to check that this relation is indeed a
  partial ordering on $\bA \cup \{q(x)\}$. Extend this partial
  ordering to a linear ordering on $\bA \cup \{q(x)\}$ in any
  way. Then $q$ is a partial isomorphism of $\bA \cup \{q(x)\}$ and we
  apply the induction.

  {\bf Case 2.} Assume the opposite to the first case happens. Then
  $q|_{\bA} = p$. Take any $x \in \dom{q} \setminus \bA$ (if there is
  no such $x$ then $\dom{p} = \dom{q}$ and the statement is
  obvious). Assume first that $x$ is not a fixed point of $q$. Then
  define a linear ordering on $\bA \cup \{x, q(x)\}$ by declaring
  \[ \forall y \in \bA\ (y < x) \& (y < q(x)) \& (x < q(x)).\] Then
  $q$ is a partial isomorphism of $\bA \cup \{x, q(x)\}$ and we can
  apply the induction hypothesis. If $x$ was a fixed point then we
  declare
  \[ \forall y \in \bA\ (y < x),\]
  and, again, induction does the rest.
\end{proof}

\begin{definition}
  Let $\bA = (\bA, d_{\bA})$, $\bB = (\bB, d_{\bB})$, and $\bC =
  (\bC, d_{\bC})$ be finite metric spaces and $i : \bA \to \bB$, $j
  : \bA \to \bC$ be isometries. We define {\it free amalgamation} $\bD =
  \bB *_{\bA}{}\bC$ of metric spaces as follows: substituting $\bB$
  and $\bC$ by their isomorphic copies we may assume that $\bB
  \cap \bC = \bA$, set $\bD = \bB \cup \bC$ and define the metric
  $d_{\bD}$ by:
  \begin{equation*}
    d_{\bD}(x,y) = 
    \begin{cases} 
      d_{\bB}(x,y) & \text{if $x$, $y \in \bB$,} \\
      d_{\bC}(x,y) & \text{if $x$, $y \in \bC$,} \\
      \min\limits_{z \in \bA}\{d_{\bB}(x,z) + d_{\bC}(z,y)\} &
      \text{otherwise.}
    \end{cases}
  \end{equation*}
  Note that the first and the second clauses agree for $x$, $y \in
  \bA$.

  If $\bA$ is empty then we set $R = \diam(\bB) + \diam(\bC)$, 
  $\bD = \bB \sqcup \bC$ and
  \begin{equation*}
    d_{\bD}(x,y) = 
    \begin{cases} 
      d_{\bB}(x,y) & \text{if $x$, $y \in \bB$,} \\
      d_{\bC}(x,y) & \text{if $x$, $y \in \bC$,} \\
      R & \text{otherwise.}
    \end{cases}
  \end{equation*}
\end{definition}

The core of our arguments will be the following seminal result due to
\ssolecki established in 2005, see \cite{S}. The second item is
slightly modified compared to the original statement, but the
modification follows from the proof in \cite{S} without any additional
work. 

\begin{theorem}[\solecki]\label{thm:solecki}
  Let a finite metric space $\bA$ and a partial isometry $p$ of $\bA$
  be given. There exist a finite metric space $\bB$ with $\bA
  \subseteq \bB$ as metric spaces, an isometry $\bar{p}$ of $\bB$
  extending $p$, and a natural number $M$ such that
  \begin{enumerate}[(i)]
  \item $\bar{p}^{2M} = \id_B$;
  \item if $a \in \bA$ is aperiodic then $\bar{p}^j(a) \ne a$ for $0 <
    j < 2M$, and moreover for any $0 < j < 2M$ $\bar{p}^j(a) \in \bA$
    iff $\bar{p}^{j-1}(a) \in \dom{p}$;
  \item $\bA \cup \bar{p}^M(\bA)$ is the free amalgam of $\bA$ and
    $\bar{p}^M(\bA)$ over $(\per{p}, id_{\per{p}}, \bar{p}^M |_{\per{p}})$.
  \end{enumerate}
  Moreover, the distances in $\bB$ may be taken from the additive
  semigroup generated by the distances in $\bA$.
\end{theorem}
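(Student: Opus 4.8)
The plan is to follow Solecki's argument from \cite{S}. First I would strip the statement down to a purely combinatorial extension problem. After rescaling we may assume that the distances of $\bA$ lie in $\{1,\dots,N\}$; in general one runs the whole argument inside the finitely generated additive semigroup $S$ spanned by the distances of $\bA$, which changes nothing below and is exactly what makes the final ``distances in $S$'' clause transparent. Regard a finite metric space with values in $\{1,\dots,N\}$ as a complete graph with edges labelled from $\{1,\dots,N\}$. Such a labelled graph is a metric space iff it omits every \emph{minimal non-metric cycle} $x_0,\dots,x_m=x_0$ with $d(x_0,x_1)>\sum_{i\ge 1}d(x_i,x_{i+1})$ and no proper subviolation; since every label is at least $1$ and $d(x_0,x_1)\le N$, such a cycle has at most $N+1$ vertices, so the set $\mathcal{F}$ of them is finite. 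Hence the class $\mathcal{M}_N$ of finite $\{1,\dots,N\}$-valued metric spaces is $\mathrm{Forb}(\mathcal{F})$ for a finite $\mathcal{F}$, and the Herwig--Lascar theorem on extension of partial automorphisms applies to it (the verification of its hypotheses for this class being one of the points carried out in \cite{S}). This yields a finite metric space $\bB_0\supseteq\bA$ and an isometry $\bar q$ of $\bB_0$ extending $p$; $\bB_0$ being finite, $\bar q$ has some finite order $L$, and the construction keeps all distances inside the chosen value set, hence inside $S$.

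Next I would upgrade $(\bB_0,\bar q)$ to the required $(\bB,\bar p,M)$. Choose $M$ to be a common multiple of $L$ and of the lengths of all $p$-periodic orbits of $\bA$, and moreover larger than the length of every finite (``broken'') $p$-orbit of $\bA$; then for $x\in\per p$ one has $\bar q^{M}(x)=x$, since the $\bar q$-period of $x$ divides its $p$-period, which divides $M$. Now build $\bB$ by (a) unfolding each $\bar q$-orbit of $\bB_0$ to an orbit of length exactly $2M$ and (b) gluing on a mirror copy so that the $M$-th power of the new isometry is the involution exchanging $\bA$ with its image and fixing $\per p$; the metric on $\bB$ is declared to be the metric (free) amalgam of the pieces over $(\per p,\mathrm{id},\bar q^M|_{\per p})$, every newly adjoined point being placed at distances equal to large sums of distances already present (hence in $S$). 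With $M$ as above, conditions (i) and (ii) are read off from the orbit lengths: $\bar p^{2M}=\id$, aperiodic points of $\bA$ have full $\bar p$-orbits of length $2M$, and such an orbit meets $\bA$ exactly along the chain $a,p(a),\dots,p^{k}(a)$ because $M$ exceeds $k$; and condition (iii) holds by the very definition of $\bB$ as that amalgam. The two things to check are that $\bB$ is genuinely a metric space and that its metric restricts to the given one on $\bA$ (and on $\bB_0$); both follow by realising $\bB$ as the shortest-path space of an edge-weighted graph whose only ``short'' edges are the old ones, so that no geodesic between two points of $\bA$ gets shortened. This is precisely where the largeness of the adjoined distances is used, and it simultaneously forces every distance of $\bB$ to be a sum of weights, hence a member of $S$.

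The main obstacle is the extension step for $\mathcal{M}_N$ itself. Finite metric spaces do \emph{not} form a free amalgamation class --- the distance between a $\bB$-point and a $\bC$-point in an amalgam $\bB*_{\bA}\bC$ is forced to be the shortest-path value, not free --- and consequently the naive idea of ``complete each $p$-orbit independently to a cycle of length $2M$'' collapses: broken orbits whose intersections with $\bA$ have different lengths impose contradictory distances between their points once one demands that the cyclic shifts be isometries. This is exactly why one must go through the Herwig--Lascar machinery rather than an elementary back-and-forth, and the enabling fact is the finiteness of $\mathcal{F}$ established in the first paragraph. The remaining difficulties are comparatively routine: arranging the construction so that every distance it ever produces is an honest sum of distances of $\bA$ (which is what lets one treat an arbitrary, not necessarily rational, metric space), and reading off the precise orbit and amalgamation data (i)--(iii) --- both are bookkeeping once the extension of the first paragraph is in hand.
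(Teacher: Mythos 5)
The paper does not actually prove this theorem: it is quoted verbatim (with a slight modification in item (ii)) from Solecki's paper \cite{S}, with the author noting only that the modification ``follows from the proof in [S] without any additional work.'' So there is no internal proof for your sketch to be compared against --- what can be assessed is whether your outline plausibly yields the stated result.

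There it runs into trouble, and the trouble is concentrated exactly in what makes the theorem useful here. The quantitative conclusions (i)--(iii) and the ``distances lie in the additive semigroup generated by the distances of $\bA$'' clause are not consequences of a black-box extension theorem; they are artifacts of an explicit orbit-by-orbit construction. Your first paragraph invokes Herwig--Lascar to produce some finite $(\bB_0,\bar q)$ extending $(\bA,p)$, but Herwig--Lascar gives no control over the cycle structure of $\bar q$ on $\bB_0\setminus \bA$ nor over which distance values appear, so you are then committed to recovering (ii), (iii) and the semigroup clause by the ``unfolding and mirror copy'' step of your second paragraph. That step is too vague to check and, as described, does not obviously work: orbit lengths of $\bar q$ need not divide $2M$, ``unfolding'' an orbit to length exactly $2M$ and simultaneously declaring $\bar p^M$ to be an involution exchanging $\bA$ with $\bar p^M(\bA)$ must be made compatible with the metric on all of $\bB_0$ (not just on $\bA$), and the free-amalgam condition (iii) is a global constraint on cross-distances that the ``place new points at large sums of old distances'' recipe does not automatically enforce once $\bB_0$ carries distances you did not choose. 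Solecki's actual argument builds the whole space directly from iterated (metric) amalgams along the orbits of $p$ --- it never passes through an uncontrolled intermediate extension --- and it is this directness that makes (ii), (iii) and the semigroup clause immediate; your route essentially discards that control at the first step and then tries to reimpose it. (A minor terminological remark: your assertion that finite metric spaces ``do not form a free amalgamation class'' conflicts with the paper's own usage, where ``free amalgam'' of metric spaces is defined to be the shortest-path amalgam; under that definition the class does amalgamate, and indeed item (iii) is phrased in exactly those terms.)
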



\begin{definition}
  \label{sec:isom-rand-order-disjstruct}
  Let $\bA$, $\bB$, $\bC$ be metric spaces and let $\bC$ be embedded
  into $\bA$ and $\bB$. We say that $\bB$ {\it extends $\bA$ over\/}
  $\bC$ 
  if there exist and embedding $i : \bA \to \bB$ such that the
  following diagram commutes:
  \[\xymatrix{\bA \ar@{-->}[dr]^{i} &  \\
              \bC \ar@{^{(}->}[u] \ar@{^{(}->}[r] &  \bB}\]

  We say that $\bA$ and $\bB$ are {\it disjoint over\/}
  $\bC$ if neither $\bB$ extends $\bA$ over $\bC$ nor $\bA$ extends
  $\bB$ over $\bC$.
\end{definition}

\begin{lemma}
  \label{sec:isom-rand-order-diffext}
  Let $\bA$ be a finite metric space, let $p$ be a partial
  isometry of $\bA$, let $x \in \dom{p}$ be such that $x \not \in
  \fix{p}$ (i.e., $p(x) \ne x$). Then there are metric spaces
  $\bA_1$ and $\bA_2$ that both extend $\bA$: $\bA \subset \bA_1$ and
  $\bA \subset \bA_2$, and partial isometries $p_1$ of $\bA_1$ and
  $p_2$ of $\bA_2$ that both extend $p$ and such that $\orb{p_1}{x}$
  and $\orb{p_2}{x}$ are disjoint over $\orb{p}{x}$.

  Moreover, one can assume that
  \[\per{p_1} = \per{p} = \per{p_2},\]
  \[\forall x \in \dom{p_1}\ p_1(x) \in \bA \iff x \in \dom{p},\]
  \[\forall x \in \dom{p_2}\ p_2(x) \in \bA \iff x \in \dom{p}.\]
\end{lemma}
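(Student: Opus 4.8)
The plan is to use Solecki's theorem (Theorem~\ref{thm:solecki}) as the engine that produces two genuinely different extensions of the orbit of $x$, and then to verify that the two resulting extensions are disjoint over $\orb{p}{x}$ in the sense of Definition~\ref{sec:isom-rand-order-disjstruct}. First I would apply Solecki's theorem to the pair $(\bA, p)$ to obtain a finite metric space $\bB \supseteq \bA$, an isometry $\bar p$ of $\bB$ with $\bar p^{2M} = \id$, and the free-amalgamation property for $\bA \cup \bar p^M(\bA)$ over $(\per p, \bar p^M|_{\per p})$. Since $x \notin \fix p$ and $p(x) \neq x$, the point $x$ is aperiodic for $p$ (if it were periodic it would still not be fixed, so I must be a little careful: the claim only needs $p(x)\neq x$; Solecki's clause (ii) about aperiodic points will govern the relevant part of the orbit). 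The crucial output is that $\bar p$ closes $x$ into a genuine finite cycle of length $2M$, while keeping $\bar p^j(a) \in \bA \iff \bar p^{j-1}(a) \in \dom p$, so no new points of $\bA$ are created along the orbit.

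Next I would build $\bA_1$ and $\bA_2$ by taking \emph{two different "free" closures} of the $p$-orbit of $x$. Concretely, let $O = \orb{p}{x}$ and let $O_1 = O \cup \{\bar p^j(x) : 0 < j < 2M,\ \bar p^j(x) \notin \bA\}$ be the new points that Solecki's construction adds to complete the cycle through $x$; set $\bA_1 = \bA \cup O_1$ with $p_1 = \bar p$ restricted to this set (and symmetrised so that $p_1$ is a partial isometry whose domain is all of $O_1$ except possibly its "last" point). For $\bA_2$ I would run the same construction but on a disjoint copy of the new points, i.e.\ take a second, isometric-but-set-theoretically-disjoint copy of $O_1 \setminus O$, glued to $\bA$ along $O$ in the same metric way; call the result $\bA_2$ with $p_2$ the corresponding isometry. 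By Solecki's theorem the distances can be taken from the additive semigroup generated by the distances of $\bA$, so both $\bA_1, \bA_2$ are honest finite metric spaces, $p_1$ extends $p$ on $\bA_1$, $p_2$ extends $p$ on $\bA_2$, and in both cases $\per{p_i} = \per p$ and $p_i(y) \in \bA \iff y \in \dom p$ — these are exactly Solecki's clauses (i) and (ii) read off along the orbit of $x$, giving the three "moreover" equalities.

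The last and most delicate step is to check that $\orb{p_1}{x}$ and $\orb{p_2}{x}$ are disjoint over $\orb{p}{x}$, i.e.\ that neither extends the other over $O$. I would argue by a distance/cardinality obstruction: the new points of $O_1$ all lie at distances determined rigidly by the Solecki construction, and the two copies, although abstractly isometric, cannot be amalgamated over $\bA$ into a common space in which one orbit embeds into the other over $O$ — because such an embedding would have to be the identity on $O$ and an isometry onto a subset of the other orbit, forcing the two disjoint copies of $O_1 \setminus O$ to coincide, contradicting the fact that we can choose the "fresh" distances (those from the new points to $\bA \setminus O$, using the free-amalgam clause (iii)) to be \emph{different} in the two constructions. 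The main obstacle is precisely engineering $\bA_1$ and $\bA_2$ so that this rigidity holds: I expect to need to enlarge $\bA$ slightly first (add one auxiliary point witnessing a distance that the two extensions are forced to realise differently), then invoke the free amalgamation in Solecki's clause (iii) to guarantee that in $\bA_1$ the new orbit points sit "as far as possible" from that witness in one prescribed way and in $\bA_2$ in an incompatible way, so that no isometric identification over $O$ is possible. Once that rigidity is in place, disjointness over $\orb{p}{x}$ follows directly from the definition.
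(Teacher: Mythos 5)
Your high-level strategy (apply Solecki's theorem, build two extensions of the orbit, argue non-amalgamability by metric rigidity) is the right one, and your handling of the ``moreover'' conditions correctly reads them off from Solecki's clause (ii). But the core of the construction as you describe it does not give disjointness, and the patches you suggest don't repair it.

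Your $\bA_1 = \bA \cup O_1$ with the metric inherited from Solecki's $\bB$, and $\bA_2$ defined as ``a second, isometric-but-set-theoretically-disjoint copy of $O_1\setminus O$, glued to $\bA$ along $O$ in the same metric way,'' produces orbits $\orb{p_1}{x}$ and $\orb{p_2}{x}$ that are literally isometric over $O$ --- they are the same metric space up to renaming. Disjointness of the orbits over $O$ is an intrinsic statement about the induced metrics on the two orbit sets (there exists no isometric embedding of one into the other commuting with the inclusions of $\orb{p}{x}$), so set-theoretic relabeling changes nothing. Your proposed rescue --- choosing the ``fresh'' distances from the new orbit points to $\bA\setminus O$ to be different in the two copies, using the free amalgamation clause (iii) --- also misses the target: those distances are not part of the metric of $\orb{p_i}{x}$ at all, so varying them cannot make the two orbits non-isometric over $O$. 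What you need, and don't have, is a way to make the distances \emph{within} the orbit different in the two extensions; Solecki's theorem by itself fixes those.

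The paper supplies exactly the missing mechanism. After running Solecki to get $\bB$, $\bar p$, and $M$, it adjoins one new point $y$ that is metrically a clone of $x$ (i.e.\ $d(a,y)=d_{\bB}(a,x)$ for all $a\ne x$) except that $d(x,y)=\eps$ is a free parameter with $0<\eps\le 2\den(\bA)$; one checks this is a metric. The partial isometry $\hat p$ is $\bar p$ except that the $2M$-cycle through $x$ is cut and redirected so that $\hat p(\bar p^{2M-1}(x))=y$ instead of $x$; this keeps $x$ aperiodic and preserves the ``moreover'' conditions. Now $\orb{\hat p}{x}$ contains the pair $\{x,y\}$ at distance $\eps$. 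Choosing $\eps_1\ne\eps_2$, both $\le 2\den(\bA)$ and both outside the set of distances realized in $\bB$, gives two orbits whose internal distance sets differ, so neither can embed isometrically into the other over $\orb{p}{x}$. This clone-point-with-tunable-distance trick is the essential idea your proposal is missing.
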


\begin{proof}
  Apply Theorem \ref{thm:solecki} to get a full
  isometry $\bar{p}$ of a finite metric space $\bB$ that extends
  $p$ and a natural number $M$. Set
  \[ \bar{\bA} = \bA \cup \bar{p}(\bA) \cup \ldots \cup
  \bar{p}^{2M-1}(\bA) \cup \{y\},\] where $y$ is a new point, i.e., a
  point not in $\bB$. Let $\delta = \den(\bA)$ denote the density of $\bA$ and fix
  an $\eps > 0$ such that $\eps \le 2\delta$. We turn $\bar{\bA}$ into a
  metric space by defining the distance between $a,b \in \bar{\bA}$, $a
  \ne b$ as follows.
  \begin{equation*}
  d_{\bar{\bA}}(a,b) = 
  \begin{cases}
  d_{\bB}(a,b) & \textit{if $a \ne y$ and $b \ne y$},\\
  d_{\bB}(a,x) & \textit{if $a \ne x$ and $b = y$},\\
  d_{\bB}(x,b) & \textit{if $a = y$ and $b \ne x$},\\
  \eps & \textit{if $\{a,b\} = \{x,y\}$}.
  \end{cases}
\end{equation*}
  We claim that $(\bar{\bA}, d_{\bar{\bA}})$ is a metric space. We
  have to check the triangle inequality (other conditions are
  obviously fulfilled). For this note that both $\bar{\bA} \setminus
  \{y\}$ and $\bar{\bA} \setminus \{x\}$ are isometrically embeddable
  into $\bB$, where the triangle inequality is known to be
  satisfied. So one needs to prove two claims.

  \textbf{Claim 1.} For any $z \in \bar{\bA}$
  \[d_{\bar{\bA}}(x,y) \le d_{\bar{\bA}}(x,z) + d_{\bar{\bA}}(z,y).\]

  If $z \in \{x,y\}$ then the statement is obvious. If $z \not \in
  \{x,y\}$ then $d_{\bar{\bA}}(x,z) + d_{\bar{\bA}}(z,y) \ge 2\delta$
  and $d_{\bar{\bA}(x,y)} = \eps \le 2\delta$ and Claim 1 follows.

  \textbf{Claim 2.} For any $z \in \bar{\bA}$
  \[d_{\bar{\bA}(x,z)} \le d_{\bar{\bA}}(x,y) + d_{\bar{\bA}}(y,z),\]
  \[d_{\bar{\bA}(z,y)} \le d_{\bar{\bA}}(z,x) + d_{\bar{\bA}}(x,y).\]

  Note that for $z \not \in \{x,y\}$ one has $d_{\bar{\bA}}(y,z) = d_{\bar{\bA}}(x,z)$. From this both
  inequalities follow immediately. 

  So $\bar{\bA}$ is a metric space. We denote it by $\bar{\bA}(\eps)$
  to signify the dependence on epsilon. Define a partial isometry
  $\hat{p}$ on $\bar{\bA}(\eps)$ by 
  \[ \hat{p}(x) = \bar{p}(x),\] whenever $x \in \bar{\bA}$ and
  $\bar{p}(x) \in \bar{\bA}$; and $\hat{p}(\hat{p}^{2M-1}(x)) = y$. It
  is straightforward to check that $\hat{p}$ is indeed a partial
  isometry. 
  Now the construction of two extensions that are
  disjoint over $\orb{p}{x}$ is easy. Take, for example, two different
  $\eps_1 \le 2\delta$, $\eps_2 \le 2\delta$, $\eps_1 \ne \eps_2$ such
  that 
  \[\eps_i \not \in \{d_\bB(x_1, x_2) : x_1, x_2 \in \bB\},\] let
  $(\bA_i, p_i) = (\bar{\bA}(\eps_i), \hat{p})$. Then $\orb{p_1}{x}$
  and $\orb{p_2}{x}$ are disjoint over $\orb{p}{x}$.
\end{proof}

The main power of Theorem \ref{thm:solecki} is the explicit construction of an
extension of a partial isometry to a full isometry of a finite metric
space. Moreover, this extension is as independent as possible. For our
purposes we need only an extension to a partial isomorphism, but we want
to keep the independence. 
Let us state explicitly a corollary of the
theorem that keeps all we need.

\begin{corollary}\label{cor:slawek}
  For any finite metric space $\bA$ and a partial isometry $p$ there
  is finite metric space $\bC$, a partial isometry $p_1$ of $\bC$,
  which is an extension of $p$, and a natural number $M$ such that
  \begin{enumerate}[(i)]
  \item $\per{p} = \per{p_1}$;
  \item $\bA \cup p_1^M(\bA)$ is the amalgam of $\bA$ and
    $p_1^M(\bA)$ over $(\per{p}, id_{\per{p}}, p_1^M |_{\per{p}})$.
  \item for any $x \in \dom{p_1}$
  \[p_1(x) \in \bA \iff x \in \dom{p}.\]
  \end{enumerate}
  Moreover, the distances in $\bC$ are taken from the
  additive semigroup generated by the distances in $\bA$, and hence
  the density is preserved $\den(\bC) = \den(\bA)$. 
\end{corollary}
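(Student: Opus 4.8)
The plan is to reduce everything to Theorem~\ref{thm:solecki}, exactly as the remark preceding the corollary suggests. Starting from the given finite metric space $\bA$ and partial isometry $p$, apply Theorem~\ref{thm:solecki} to obtain a finite metric space $\bB \supseteq \bA$, a full isometry $\bar p$ of $\bB$ extending $p$, and a natural number $M$ with $\bar p^{2M} = \id_{\bB}$, with the aperiodicity clause (ii), and with $\bA \cup \bar p^M(\bA)$ being the free amalgam of $\bA$ and $\bar p^M(\bA)$ over $(\per p, \id_{\per p}, \bar p^M|_{\per p})$; moreover the distances in $\bB$ lie in the additive semigroup generated by the distances in $\bA$. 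The idea is then to \emph{discard} the full-isometry structure we do not need and retain only a partial isomorphism on a suitable subspace $\bC$ of $\bB$.

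The natural candidate is to take $\bC = \bA \cup \bar p^M(\bA)$ and $p_1 = \bar p^M|_{\bC}$, restricted to those points whose $\bar p^M$-image again lands in $\bC$. First I would check clause (iii): by construction $\dom p_1$ consists of the points $x$ with $\bar p^M(x) \in \bA \cup \bar p^M(\bA)$; using clause (ii) of Theorem~\ref{thm:solecki} (the ``$\bar p^j(a) \in \bA$ iff $\bar p^{j-1}(a) \in \dom p$'' part, iterated $M$ times, together with the periodic points being fixed so that $\bar p^M$ fixes $\per p$ pointwise), one sees that for $x \in \dom p_1$ one has $p_1(x) = \bar p^M(x) \in \bA$ precisely when $x \in \dom p$. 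Here I must be a little careful about periodic points: on $\per p = \per{\bar p}$ the isometry $\bar p^M$ is an involution, so $\per{p_1} = \per p$ and clause (i) holds; aperiodic points of $\dom p$ go, under $\bar p^M$, to genuinely new points of $\bar p^M(\bA)$, which is where clause (ii) of the corollary comes from — it is literally clause (iii) of Theorem~\ref{thm:solecki} with $M$ in place of $M$, noting that the amalgam is over the periodic part. The semigroup-of-distances statement, and hence $\den(\bC) = \den(\bA)$ (since adding distances of $\bA$ can only increase them, never drop below $\den(\bA)$), is inherited directly from the ``moreover'' clause of Theorem~\ref{thm:solecki}.

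The main obstacle I anticipate is bookkeeping rather than conceptual: one must verify that the domain/range condition (iii) survives passage from the full isometry $\bar p$ to the $M$-th power restricted to $\bC$, and in particular that no aperiodic orbit ``re-enters'' $\bA$ prematurely under iteration of $\bar p$ up to step $M$ in a way that would violate the ``iff'' in (iii). This is handled by unwinding clause (ii) of Theorem~\ref{thm:solecki}: write $\bar p^M(x) = \bar p(\bar p^{M-1}(x))$ and apply the single-step characterization $M$ times, using that for an aperiodic $a$ none of $\bar p(a),\dots,\bar p^{2M-1}(a)$ equals $a$, so the orbit segment of length $M$ does not collapse. Once (iii) is in hand, clauses (i) and (ii) of the corollary are immediate from clauses (iii) (with the amalgam over the periodic points, which are exactly the fixed points of $\bar p^M$) and the preservation of the periodic set, and the distance/density remark is a direct quotation. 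I would present the argument as: ``Apply Theorem~\ref{thm:solecki}; set $\bC = \bA \cup \bar p^M(\bA)$ and let $p_1$ be $\bar p^M$ restricted to $\{x \in \bC : \bar p^M(x) \in \bC\}$; (i) and (iii) follow from clause (ii) of the theorem by iteration, (ii) is clause (iii) of the theorem, and the final sentence is the `moreover' of the theorem.''
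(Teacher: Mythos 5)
Your reduction to Theorem~\ref{thm:solecki} is the right starting point, but the specific choice of $\bC$ and $p_1$ does not work, and this is a genuine error rather than mere bookkeeping. You set $\bC = \bA \cup \bar p^M(\bA)$ and $p_1 = \bar p^M$ restricted to $\{x \in \bC : \bar p^M(x) \in \bC\}$. First note that this restriction is vacuous: since $\bar p^{2M} = \id_{\bB}$, $\bar p^M$ maps $\bA$ onto $\bar p^M(\bA)$ and $\bar p^M(\bA)$ back onto $\bA$, so $\bar p^M(\bC) = \bC$ and $\dom{p_1} = \bC$. Consequently $p_1$ is an \emph{involution} of $\bC$, so $\per{p_1} = \bC$, and clause (i) of the corollary already fails whenever $\per p \ne \bC$. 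Clause (iii) fails too: the single-step criterion of Theorem~\ref{thm:solecki} gives $\bar p^M(x) \in \bA$ if and only if $\bar p^{M-1}(x) \in \dom{p}$, and iterating downward shows this is equivalent to the \emph{entire} orbit segment $x, p(x), \dots, p^{M-1}(x)$ lying in $\dom{p}$ — a much stronger condition than $x \in \dom{p}$. For a concrete counterexample take $\bA = \{a,b\}$ with $p(a)=b$ and $b \notin \dom{p}$: Solecki's theorem forces $M \ge 2$ here (else $\bA \cap \bar p^M(\bA) \supseteq \{b\} \ne \per p = \es$), and then $a \in \dom{p}$ but $\bar p^2(a) \notin \bA$ and hence $\bar p^M(a) \notin \bA$, so $p_1(a) \notin \bA$ while $a \in \dom p$.

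The fix is to keep $p_1$ a \emph{single} step of $\bar p$ rather than the $M$-fold composition. The paper takes
\[
\bC = \bA \cup \bar p(\bA) \cup \dots \cup \bar p^M(\bA),
\qquad
p_1 = \bar p\big|_{\bA \cup \bar p(\bA) \cup \dots \cup \bar p^{M-1}(\bA)}.
\]
Then for $x \in \dom{p_1}$ one has $p_1(x) = \bar p(x)$, and clause (iii) is exactly the $j=1$ instance of Theorem~\ref{thm:solecki}(ii) for $x \in \bA$ aperiodic (while for $x \in \bar p^j(\bA) \setminus \bA$ both sides of the biconditional are false, and periodic points are handled trivially). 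Clause (ii) of the corollary is recovered by iterating $p_1$ $M$ times, since $p_1^M|_{\bA} = \bar p^M|_{\bA}$, giving precisely the free amalgam of Theorem~\ref{thm:solecki}(iii). Clause (i) holds because aperiodic $\bar p$-orbits of length $< 2M$ cannot close up, and any point of $\dom{p_1}$ is moved out of $\dom{p_1}$ after at most $M$ steps, so $p_1$ acquires no new periodic points. The ``moreover'' part is inherited verbatim, as you correctly observed.
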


\begin{proof}
  Apply Theorem \ref{thm:solecki} to $\bA$ and $p$ to get a metric
  space $\bB$, a full isometry $\bar{p}$ of $\bB$ and a natural
  number $M$. Now set 
  \[\bC = \bA \cup \bar{p}(\bA) \cup \ldots \bar{p}^M (\bA),\]
  and $p_1 = \bar{p}|_{\bA \cup \bar{p}(\bA) \cup \ldots \cup
    \bar{p}^{M-1}(\bA)}$.
  It is trivial to
  check that such a $\bC$ and $p_1$ satisfy the conditions. 
\end{proof}

\begin{definition}
Let $(M,d)$ be a metric space, let $x,y \in M$. We say that the
distance $d(x,y)$ {\it passes through a point\/} $z \in M$ if 
\[d(x,y) = d(x,z) + d(z,y).\]
\end{definition}

We are going to apply Corollary \ref{cor:slawek} to partial
isometries that also preserve an ordering. That is why we impose
additional assumption: all periodic points are fixed points, i.e.,
$\per{p} = \fix{p}$.

\begin{theorem}\label{thm:isomext}
  Let $\bA$ be a finite metric space. Let $p$ and $q$ be two partial
  isometries of $\bA$ such that $\per{p} = \fix{p}$ and $\per{q} =
  \fix{q}$. Suppose $\fix{p} \cap \fix{q} \ne \es$. Then there are
  finite metric space $\bB$,
  extensions $\bar{p}$, $\bar{q}$ of $p$ and $q$ respectively (these
  extensions are partial isometries of $\bB$) and an
  element $w = t^Kv \in F(s,t)$, $K \ne 0$ such that
  \begin{enumerate}[(i)]
  \item\label{thm:prop:fix} $\per{\bar{p}} = \per{p}\ (= \fix{p})$, $\per{\bar{q}}
    = \per{q}\ (= \fix{q})$; 
  \item\label{thm:prop:amal} $\dom{\bar{p}} \cup w(\bar{p},
    \bar{q})(\bA)$ is the free amalgam of $\dom{\bar{p}}$ and
    $w(\bar{p},\bar{q})(\bA)$ over $F(p) \cap F(q)$.
  \end{enumerate}
  Moreover, the distances in $\bB$ are taken from the
  additive semigroup generated by the distances in $\bA$, and hence
  $\den(\dom{\bar{p}} \cup \bA) = \den(\bA)$, $\den(\dom{\bar{q}} \cup
  \bA) = \den(\bA)$.
\end{theorem}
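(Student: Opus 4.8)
The plan is to build $w$ and the extensions $\bar p,\bar q$ by a ``ping-pong'' construction in $F(s,t)$, alternately applying powers of $p$ and $q$ much in the spirit of the liberation lemmas of Section \ref{sec:autq}, but now invoking Corollary \ref{cor:slawek} to carry out each elementary step while keeping the fixed-point sets unchanged and keeping everything inside the additive semigroup generated by the distances of $\bA$. First I would fix a common fixed point $a_0\in\fix p\cap\fix q$ (this is where the hypothesis $\fix p\cap\fix q\neq\es$ is used) and set aside $\bA_0=\bA$. The idea is to produce a finite sequence of metric spaces $\bA=\bA_0\subseteq\bA_1\subseteq\cdots\subseteq\bA_N=\bB$ together with partial isometries $p_i,q_i$ (extending $p,q$) and exponents so that, after the last step, the ``current image'' $w(\bar p,\bar q)(\bA)$ of $\bA$ sits in $\bB$ in the freest possible position relative to $\dom{\bar p}$, amalgamated only over the common fixed points.

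The first step is to arrange the word to begin with a nonzero power of $t$: apply Corollary \ref{cor:slawek} to $(\bA,p)$ to obtain $\bC_1\supseteq\bA$, an extension $p_1$ of $p$ with $\per{p_1}=\per p=\fix p$, a power $M_1$, and the key property that $\bA\cup p_1^{M_1}(\bA)$ is the free amalgam of $\bA$ and $p_1^{M_1}(\bA)$ over $(\fix p, \mathrm{id}, p_1^{M_1}|_{\fix p})$, with $p_1(x)\in\bA\iff x\in\dom p$. Then apply Corollary \ref{cor:slawek} again, now to $(\bC_1, q_1)$ where $q_1$ is any extension of $q$ to $\bC_1$ with $\per{q_1}=\fix q$ (such an extension exists since one can add the new points of $\bC_1$ as fixed points of $q_1$, or more carefully track which new points should map where — this bookkeeping is routine), obtaining $\bC_2$, $q_2$, $M_2$. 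I alternate in this fashion a bounded number of times; since at each stage the corollary produces a \emph{free} amalgam over the relevant fixed-point set and leaves $\per{\cdot}$ equal to $\fix{\cdot}$, the composite image after, say, applying $p^{M_1}$ then $q^{M_2}$ then $p^{M_3}$ is, by transitivity of free amalgamation, freely amalgamated with $\dom{\bar p}$ over $\fix p\cap\fix q$. Setting $w=t^{M_{2r}}s^{M_{2r-1}}\cdots t^{M_2}s^{M_1}$ (reading right to left, as the paper does) and $\bar p,\bar q$ the final extensions restricted appropriately so that the relevant orbit points are in their domains, conditions \eqref{thm:prop:fix} and \eqref{thm:prop:amal} hold, and the ``Moreover'' clause follows since Corollary \ref{cor:slawek} keeps all distances in the additive semigroup generated by the distances of $\bA$, whence density is preserved at each stage.

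The main obstacle I anticipate is \eqref{thm:prop:amal} — verifying that the \emph{composite} of several one-step free amalgamations (alternately along $p$ and along $q$) is itself a free amalgam of $\dom{\bar p}\cup w(\bar p,\bar q)(\bA)$ over $\fix p\cap\fix q$, rather than over some larger set. The subtlety is that Corollary \ref{cor:slawek} gives freeness over $\per p=\fix p$ at the $p$-step and over $\fix q$ at the $q$-step, and one needs these to compose so that the surviving ``overlap'' is exactly $\fix p\cap\fix q$; this is why the construction must begin with a $t$-power (so the last operation applied to $\bA$ on its way to $w(\bar p,\bar q)(\bA)$ is a $p$-power, matching $\dom{\bar p}$) and why the word has the form $w=t^Kv$ with $K\neq0$. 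Making this precise requires a short induction on the number of alternations, at each step using that free amalgamation of $\bX *_{\bZ}\bY$ followed by free amalgamation with a third space over a subset of $\bZ$ yields a free amalgam over that subset — a standard fact about the metric free amalgam defined earlier in this section, which I would isolate as an auxiliary observation and then apply repeatedly.
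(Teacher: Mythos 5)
You have the right scaffolding: alternately invoking Corollary \ref{cor:slawek} to build up $w$ power by power, letting $\fix{p}\cap\fix{q}\ne\es$ anchor the construction, and the observation that the ``Moreover'' clause is inherited step by step. You also correctly flagged condition \eqref{thm:prop:amal} as the crux. But the way you propose to discharge it does not work, and the gap is exactly where the paper's proof does its real work.

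Your proposal rests on an asserted ``transitivity of free amalgamation'': that after a $p$-step (which gives freeness over $\fix{p}$) followed by a $q$-step (which gives freeness over $\fix{q}$) the composite image is freely amalgamated with the original over $\fix{p}\cap\fix{q}$. This is false, and it is not a ``standard fact'' about the metric free amalgam. Concretely, after the $q$-step the distance from $q^{M_2}p^{M_1}(c)$ to a point $y\in\dom{\bar{p}}$ passes through some $z\in\fix{q}$; but nothing forces $z\in\fix{p}$. Unwinding further, $d(p^{M_1}(c),z)$ passes through some $z'\in\fix{p}$, which again need not lie in $\fix{q}$. One ends up with a chain of intermediate points alternating between $\fix{p}\setminus\fix{q}$ and $\fix{q}\setminus\fix{p}$, and no algebraic identity collapses this chain to a single point of $\fix{p}\cap\fix{q}$. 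Your auxiliary observation, ``free amalgamation over $\bZ$ followed by free amalgamation over a subset $\bZ'\subseteq\bZ$ yields a free amalgam over $\bZ'$,'' would let you shrink the base only if the two amalgamation steps used nested base sets; here the bases $\fix{p}$ and $\fix{q}$ are incomparable, so the observation does not apply. Relatedly, you say the alternation runs for ``a bounded number of times'' without specifying the bound; no fixed number of alternations suffices in general.

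The paper's resolution is quantitative, not structural. Set $N=\lceil 2\diam(\bA)/\den(\bA)\rceil$ and run $2N+2$ alternating steps. Two claims are established by induction on the step index: first (Claim 2 in the paper), once the distance from $w_i(c)$ to a point $x\in\fix{p}\cup\fix{q}$ passes through some $z\in\fix{p}\cap\fix{q}$, it continues to pass through that same $z$ at every later step, because the subsequent power fixes $z$; second (Claim 3), if after $i$ steps the distance from $w_i(c)$ to $x\in\fix{p}\,\bigtriangleup\,\fix{q}$ has never passed through a common fixed point, then $d(w_i(c),x)\ge\lfloor i/2\rfloor\den(\bA)$ --- each fresh alternation that fails to reach $\fix{p}\cap\fix{q}$ forces the distance up by at least $\den(\bA)$. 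After $2N+2$ steps the second alternative would give $d(w_{2N+2}(c),y)>2\diam(\bA)$, contradicting the bound $d(w_{2N+2}(c),y)\le d(c,z_0)+d(z_0,y)\le 2\diam(\bA)$ obtained by routing through any $z_0\in\fix{p}\cap\fix{q}$. So the first alternative must hold, and Claim 2 then propagates it to step $2N+2$, which together with a short reduction (the paper's Claim 1) gives \eqref{thm:prop:amal}. This pigeonhole-by-distance argument, with the bound $2N+2$ driven by the ratio $\diam/\den$, is the missing idea; there is no purely categorical shortcut. One small additional note: since the last step of the alternation is a $q$-step, the word has the form $w=t^{M_{2N+2}}\cdots$ because the final applied power is a $q$-power, not a $p$-power as your intuition suggested; the freeness over $\dom{\bar{p}}$ in \eqref{thm:prop:amal} is then first obtained over $\fix{q}$ and subsequently pushed down to $\fix{p}\cap\fix{q}$ by the quantitative argument.
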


\begin{proof}
  Let 
  \[ N = \left\lceil \frac{2\diam(\bA)}{\den(\bA)} \right\rceil.\] Define
  inductively the sequence of elements $w_k \in F(s,t)$, extensions
  $\bar{p}_k$, $\bar{q}_k$ and metric spaces $\bA_k$ as follows:

  {\bf Step 0:} Let $\bar{p}_0 = p$, $\bar{q}_0 = q$, $w_0 = $ empty word,
  $\bA_0 = \bA$;

  {\bf Step k:} If $k$ is odd then apply Corollary \ref{cor:slawek} to
  $\bar{p}_{k-1}$ and $\bA_{k-1}$ to get $\bar{p}_k$ and $M_k$; set
  $\bar{q}_k = \bar{q}_{k-1}$, $w_k = s^{M_k}w_{k-1}$, $\bA_k =
  \bA_{k-1} \cup \dom{\bar{p}_{k}} \cup \ran{\bar{p}_{k}}$.

  If $k$ is even do the same thing with roles of $p$ and $q$
  interchanged.

  We claim that $\bar{p} = \bar{p}_{2N+2}$, $\bar{q} =
  \bar{q}_{2N+2}$, $\bB = \bA_{2N+2}$, and $w = w_{2N+2}$ fulfill the
  requirements of the statement. Let $d$
  denote the metric on $\bB$. It is obvious that $\fix{\bar{p}}
  = \fix{p}$ and $\fix{\bar{q}} = \fix{q}$ (this is given by Corollary
  \ref{cor:slawek} at each stage). 
  The moreover part is also obvious, since it is fulfilled
  at every step of the construction.
  It remains to show that for any $x \in w(\bar{p}, \bar{q})(\bA)$ and
  any $y \in \dom{\bar{p}}$ one has
  \[d(x,y) = \min\{d(x,z) + d(z,y) : z \in \fix{p} \cap
  \fix{q}\}. \eqno{(1)}\] Note that by the last step of the
  construction for any $x \in w(\bar{p}, \bar{q})(\bA)$ and $y \in
  \dom{\bar{p}}$ we have
  \[d(x,y) = \min\{d(x,z) + d(z,y) : z \in \fix{q}\}.\]

  We first prove several claims.

  \textbf{Claim 1.} It is enough to show that $(1)$ holds for all $x \in
  w(\bar{p}, \bar{q})(\bA)$ and $y \in \fix{q}$.

  \textit{Proof of Claim 1.} Assume $(1)$ holds for all $x \in
  w(\bar{p}, \bar{q})(\bA)$ and $y \in \fix{q}$. Let $y'
  \in \dom{\bar{p}}$, then for some $c \in \fix{q}$
  \[d(x, y') = d(x,c) + d(c,y') = \min\{d(x,e) +
  d(e,y') : e \in \fix{q}\}. \eqno{(2)}\]
  By assumptions of the claim we get 
  \[d(x, y') = d(x,z) + d(z,c) + d(c, y') \ge d(x,z) +
  d(z,y'),\]
  for some $z \in \fix{p} \cap \fix{q}$; and so, by $(2)$,
  \[d(x,y') = d(x,z) + d(z,y').\]
  This proves the claim.

  Let $w_i(c)$ denote
  $w_i(\bar{p}_i, \bar{q}_i)(c)$. 

    \textbf{Claim 2.} Let $x \in \fix{p} \cup \fix{q}$, $c \in \bA$
    and suppose that for some $z \in \fix{p} \cap \fix{q}$ and for
    some $i$ the distance between $w_i(c)$ and $x$ passes through
    $z$. Then for any $j \ge i$ the distance between $w_j(c)$ and $x$
    passes through the same point $z$.

  \textit{Proof of Claim 2.} This follows by induction. Here is an
  inductive step. 
  Assume for
  definiteness that $j+1$ is odd (the case, when $j+1$ is even, is
  similar). The distance between $x$ and $w_{j+1}(c)$ passes through
  a point $z' \in \fix{p}$ ($z' \in \fix{q}$ if $j + 1$ is
  even). Then
  \[d(w_{j}(c),x) = d(w_{j}(c),z) + d(z,x) \le
  d(w_{j}(c),z') + d(z',x),\]
  \[d(w_{j+1}(c),x) = d(w_{j+1}(c),z') + d(z',x),\]
  but $d(w_{j+1}(c),z') = d(w_{j}(c),z')$ (this is because
  $w_{j+1} = s^mw_{j}$ and $z'$ is fixed by $p$). Hence
  \[d(w_{j}(c),x) \le d(w_{j+1}(c),x),\]
  but also 
  \[d(w_{j+1}(c),x) \le d(w_{j+1}(c),z) + d(z,x)
  = d(w_{j}(c),z) + d(z,x) = d(w_{j}(c),x),\] and so
  $d(w_{j+1}(c),x) = d(w_{j}(c),x)$. This proves the claim.

  \textbf{Claim 3.} Let $x \in \fix{p} \bigtriangleup \fix{q}$ (here
  $\bigtriangleup$ is a symmetric difference of sets), $c \in
  \bA$. Suppose that the distance between $w_{i}(c)$ and $x$ doesn't
  pass through a point in $\fix{p} \cap \fix{q}$. Then $d(w_i(c),x)
  \ge \lfloor i/2 \rfloor \den(\bA)$.

  \textit{Proof of Claim 3.}
  Suppose first that $x \in \fix{p} \setminus
  \fix{q}$. 
  We prove the statement by induction on $i$. The base of induction is
  trivial, so we show the inductive step: the statement is true for
  $i$ and we need to show it for $i+1$. If $i$ is even then, since
  $\lfloor i/2 \rfloor = \lfloor (i+1)/2 \rfloor$ and because
  $d(w_{i+1}(c),x) = d(w_{i}(c),x)$ (this is since $i$ is even
  and $x \in \fix{p}$) the statement follows immediately. So, assume
  $i$ is odd. Then the distance between $w_{i+1}(c)$ and $x$ passes
  through a point $z \in \fix{q}$. Now two things can happen. Suppose
  first for some $j \le i$ the distance between $w_j(c)$ and $z$
  passes through a point $z' \in \fix{p} \cap \fix{q}$. Then by Claim
  2, the distance between $z$ and $w_{i+1}(c)$ must pass through
  $z'$. Now
  \begin{multline*}
  d(w_{i+1}(c),x) = d(w_{i+1}(c),z) + d(z,x) = \\
  d(w_{i+1}(c),z') + d(z',z) + d(z,x) \ge d(w_{i+1}(c),z')
  + d(z',x).
  \end{multline*}

  And so the distance between $w_{i+1}(c)$ and $x$ passes through a
  point $z' \in \fix{p} \cap \fix{q}$. Contradiction with the assumptions
  of the claim. So, for no $j \le i$ the distance between $w_j(c)$ and $x$
  passes through a point in $\fix{p} \cap \fix{q}$. Then, applying
  induction to $w_i(c)$ and $z$, we get $d(w_i(c),z) \ge \lfloor
  i/2 \rfloor \den(\bA)$. But since $d(w_{i+1}(c),z) = d(w_i(c),z)$
  and since $d(x,z) \ge \den(\bA)$ we get
  \[d(w_{i+1}(c),x) \ge \lfloor i/2 \rfloor \den(\bA) + \den(\bA) \ge \lfloor
  (i+1)/2 \rfloor \den(\bA).\]
  In the case, when $x \in \fix{q} \setminus \fix{q}$, the distance
  increases by $\den(\bA)$ at even stages of the construction, the rest of
  the argument for this case is similar. The claim is proved.

  Fix now $c \in \bA$ and $y \in \fix{q}$. 
  It remains to show that
  \[d(w_{2N+2}(c),y) = \min\{d(w_{2N+2}(c),z) + d(z,y) : z \in \fix{p} \cap
  \fix{q}\}.\]  


  We now have two cases (we'll show,
  though, that Case $2$ is impossible).

  \textbf{Case 1.} For some $i \le 2N+2$ the distance between
  $w_{i}(c)$ and $y$ passes through a point $z \in \fix{p} \cap
  \fix{q}$. Then
  \[d(y,w_{i}(c)) = \min\{d(y,z) + d(z,w_{i}(c)): z \in
  \fix{p} \cap \fix{q}\}.\] 

  Applying Claim 2 for $j = 2N+2$, we get \[d(y, w_{2N+2}(c))
  = \min\{d(y,z) +
  d(z,w_{2N+2}(c)) : z \in \fix{p}\cap \fix{q}\}.\] And the theorem
  is proved for this case.

  \textbf{Case 2.} For no $i \le 2N+2$ the distance between
  $w_i(c)$ and $y$ passes through a point in $\fix{p} \cap
  \fix{q}$. Then by Claim 3 
  \[d(w_{2N+2}(c),y) \ge (N+1)\den(\bA) > 2diam(\bA).\]
 
  Let, on the other hand, $z \in \fix{p} \cap \fix{q}$ be any common
  fixed point. Then $d(w_{2N+2}(c),y) \le d(w_{2N+2}(c),z) +
  d(z,y) = d(c,z) + d(z,y) \le 2diam(\bA)$. Contradiction. So
  this case never happens.
%
%
%
\end{proof}

\begin{remark}\label{sec:main-remark}
  Note that the same result is also true for ordered metric
  spaces. For this one just has to apply Proposition
  \ref{sec:isom-to-ord} at each step of the construction of $\bar{p}$
  and $\bar{q}$.
\end{remark}

Before we apply this result to the classes of topological similarity
let us mention another application. For a subset $\bA \subseteq \bbU$
($\bA \subseteq \bbQU)$ let $\isom{\bA}{\bbU}$ ($\isom{\bA}{\bbQU}$,
respectively) denote the subgroup of isometries that pointwise fix
$\bA$.  Recall a theorem of Julien Melleray from \cite{M}.

\begin{theorem}[Melleray]
  \label{sec:isom-rand-order-gm}
  Let $\bbU$ be the Urysohn space, let $\bA, \bB \subset
  \bbU$ be two finite subsets. Then
  \[\isom{\bA \cap \bB}{\bbU} = \overline{ \langle \isom{\bA}{\bbU},
    \isom{\bB}{\bbU} \rangle}.\]
\end{theorem}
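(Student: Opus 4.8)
The plan is to prove Melleray's theorem about the topological closure of the subgroup generated by $\isom{\bA}{\bbU}$ and $\isom{\bB}{\bbU}$. One inclusion is trivial: since $\bA \cap \bB \subseteq \bA$ and $\bA \cap \bB \subseteq \bB$, every isometry fixing $\bA$ (and every isometry fixing $\bB$) also fixes $\bA \cap \bB$, so $\langle \isom{\bA}{\bbU}, \isom{\bB}{\bbU} \rangle \subseteq \isom{\bA \cap \bB}{\bbU}$, and the latter group is closed (being a pointwise stabilizer), hence it contains the closure. The real content is the reverse inclusion: every isometry $g$ fixing $\bA \cap \bB$ must be approximated, in the topology of pointwise convergence, by a finite product of isometries each of which fixes $\bA$ or fixes $\bB$.

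To prove the reverse inclusion, I would fix $g \in \isom{\bA \cap \bB}{\bbU}$ and a finite set $\bF \subseteq \bbU$, and seek an element $h$ in the subgroup that agrees with $g$ on $\bF$. Enlarging $\bF$, I may assume $\bA \cup \bB \subseteq \bF$ and also $g(\bF) \subseteq \bF$ (replace $\bF$ by $\bF \cup g(\bF) \cup g^{-1}(\bF)$, iterated finitely — actually one needs $\bF \cup g(\bF)$ is enough to pin down $g|_\bF$). The partial isometry $p = g|_\bF$ fixes $\bA \cap \bB$ pointwise. The key step is a back-and-forth / amalgamation construction: I want to write $p$ as a composition $p = p_k \circ \cdots \circ p_1$ of partial isometries of larger and larger finite subsets of $\bbU$, where each $p_i$ fixes $\bA$ pointwise or fixes $\bB$ pointwise, and then extend each $p_i$ to a full isometry $h_i \in \isom{\bA}{\bbU}$ or $\isom{\bB}{\bbU}$ by ultrahomogeneity of $\bbU$; the product $h = h_k \cdots h_1$ then agrees with $g$ on $\bF$. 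To factor $p$ this way, the natural approach is: first move $\bF$ "off of $\bB$" using an isometry fixing $\bA$ — i.e. find $h_1 \in \isom{\bA}{\bbU}$ so that $h_1(\bF)$ meets $\bB$ only in $\bA \cap \bB$; then the transported copy of $p$ can be realized by an isometry fixing $\bB$; then move back. Making the relevant metric amalgams well-defined is exactly what Solecki-type free amalgamation (the free amalgam construction recalled earlier, and Corollary \ref{cor:slawek}) provides: one amalgamates $\bbU$ with a disjoint copy over the common part, using the large-distance convention to keep the triangle inequality, and then embeds the result back into $\bbU$ via the extension property.

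More concretely, the argument I would run is the following. Let $\bC = \bA \cap \bB$. Working inside $\bbU$, build an isometric copy $\bF'$ of $\bF$ over $\bC$ such that $\bF'$ and $\bB$ are "as independent as possible" over $\bC$ — formally, the submetric structure on $\bF' \cup \bB$ is the free amalgam of $\bF$ and $\bB$ over $\bC$ (all cross-distances determined by going through $\bC$, or set to a large constant when $\bC$ would be bypassed). By ultrahomogeneity and the extension property of $\bbU$ this copy can be found inside $\bbU$, giving an isometry $\sigma$ of $\bbU$ with $\sigma|_\bC = \mathrm{id}$ and $\sigma(\bF) = \bF'$; thus $\sigma \in \isom{\bC}{\bbU}$, but I in fact want $\sigma \in \isom{\bA}{\bbU}$, so I should build the copy to be free over $\bA$, not merely over $\bC$ — that is, choose $\bF'$ independent from $\bB$ over $\bA$ while still extending $\bA$, which is possible since $p$ fixes all of $\bC$ and one only needs the non-$\bC$ part of $\bF$ pushed away. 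Then $\sigma p \sigma^{-1}$ is a partial isometry whose domain and range, outside $\bC$, are disjoint from $\bB$, so it extends to an isometry $\tau$ fixing $\bB$ pointwise, i.e. $\tau \in \isom{\bB}{\bbU}$. Finally $\sigma^{-1} \tau \sigma$ is an element of $\langle \isom{\bA}{\bbU}, \isom{\bB}{\bbU}\rangle$ agreeing with $p = g|_\bF$ on $\bF$, which proves $g$ is in the closure.

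The main obstacle, and the step requiring care, is ensuring the amalgamated metric space is actually a metric space and actually embeds into $\bbU$ in the way needed — i.e. verifying the triangle inequality for the free amalgam and checking that the "independence over $\bA$" can be arranged simultaneously with "still extends $\bA$" so that the resulting conjugating isometry lies in $\isom{\bA}{\bbU}$ rather than only $\isom{\bA \cap \bB}{\bbU}$. This is precisely where one leans on the free-amalgamation machinery recalled before Theorem \ref{thm:solecki} and on Corollary \ref{cor:slawek}; the large-distance convention for points not linked through the common part is what rescues the triangle inequality, and one must double-check that distances from $\bF \setminus \bC$ to $\bB \setminus \bA$ can be taken large without disturbing distances within $\bA$ or within $\bF$. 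A secondary technical point is the bookkeeping that a single pair of conjugations suffices (as opposed to a longer alternating product); if a single round does not obviously work one falls back on iterating the "push off $\bB$, then push off $\bA$" alternation finitely many times, which terminates because each round strictly reduces the overlap of the transported domain with the obstructing set down to $\bA \cap \bB$.
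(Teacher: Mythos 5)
The paper does not actually prove Theorem~\ref{sec:isom-rand-order-gm}; it cites Melleray's paper for it and instead proves the stronger, $\eps$-free Theorem~\ref{sec:isom-rand-order-noeps} (whose proof, via Theorem~\ref{thm:isomext}, gives a new proof of the closure equality). So the fair comparison is between your argument and that proof.

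Your single-conjugation step is where the argument breaks. You conjugate $p=g|_\bF$ by $\sigma\in\isom{\bA}{\bbU}$, arranging $\sigma(\bF)$ to be in free amalgam position with $\bB$ \emph{over $\bA$}, and then claim $\sigma p\sigma^{-1}$ extends to some $\tau\in\isom{\bB}{\bbU}$. But $\sigma$ fixes $\bA$, so $\bA\subseteq\sigma(\bF)$, and $p$ may move points of $\bA\setminus\bB$ (it is only required to fix $\bA\cap\bB$). Take $a\in\bA\setminus\bB$ with $p(a)\neq a$ and $y\in\bB\setminus\bA$. For $\sigma p\sigma^{-1}\cup\mathrm{id}_{\bB}$ to be a partial isometry you would need $d(a,y)=d(\sigma(p(a)),y)$, but freeness of $\sigma(\bF)$ over $\bA$ says nothing about this: either $\sigma(p(a))=p(a)\in\bA$ and $d(p(a),y)\neq d(a,y)$ in general, or $\sigma(p(a))\notin\bA$ and the free-amalgam formula forces $d(\sigma(p(a)),y)=\min_{z\in\bA}(d(\sigma(p(a)),z)+d(z,y))$, again with no reason to equal $d(a,y)$. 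The position one actually needs is freeness over $\bA\cap\bB$, but that is incompatible with $\sigma$ fixing all of $\bA$. This tension is intrinsic, not a bookkeeping detail.

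The way the paper escapes it is precisely the content of Theorem~\ref{thm:isomext}: instead of conjugating by one element of $\isom{\bA}{\bbU}$, one conjugates by a word $w(q_1,q_2)$, where $q_1$ extends $\mathrm{id}_{\bA}$ and $q_2$ extends $\mathrm{id}_{\bB}$. The word $w$ fixes neither $\bA$ nor $\bB$, only $\bA\cap\bB$, and Theorem~\ref{thm:isomext} guarantees that after roughly $2\lceil 2\diam(\bA)/\den(\bA)\rceil$ alternations the image $w(q_1,q_2)(\bA)$ sits in free amalgam position over $\fix{p_1}\cap\fix{p_2}=\bA\cap\bB$, which is exactly what lets $q_1$ be further extended to transport $p$ and gives $q=w^{-1}(q_1,q_2)\,q_1\,w(q_1,q_2)\in\langle\isom{\bA}{\bbU},\isom{\bB}{\bbU}\rangle$ agreeing with $p$. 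Your fall-back termination argument (``each round strictly reduces the overlap with the obstructing set down to $\bA\cap\bB$'') also misdiagnoses what must terminate: the domain of the transported map always contains both $\bA$ and $\bB$ (these are the fixed sets), so no set-theoretic overlap ever shrinks; what improves is metric independence, and the proof of Theorem~\ref{thm:isomext} achieves this by showing a quantitative lower bound (Claim 3) that eventually exceeds $2\diam(\bA)$, forcing every geodesic to pass through $\bA\cap\bB$. Without that quantitative argument, the alternation has no visible reason to stop.
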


Let us give an equivalent reformulation of the above result.

\begin{theorem}[Melleray]
  \label{sec:isom-rand-order-gm2}
   Let $\bbU$ be the Urysohn space, let $\bA, \bB \subset
  \bbU$ be two finite subsets. Then for any $\eps > 0$, for any $p \in
  \isom{\bA \cap \bB}{\bbU}$, for any $\bC \subseteq \bbU$ there is $q \in \langle \isom{\bA}{\bbU},
    \isom{\bB}{\bbU} \rangle$ such that 
  \[\forall x \in \bC\ d(p(x),q(x)) < \eps.\]
\end{theorem}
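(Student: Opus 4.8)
This statement is just Theorem~\ref{sec:isom-rand-order-gm} rephrased in the language of the topology of pointwise convergence, and that is how I would prove it. Recall that $\iso{\bbU}$ carries that topology, so a basic open neighbourhood of an isometry $p$ has the form $V(p;\bC,\eps)=\{g\in\iso{\bbU}: d(g(x),p(x))<\eps \text{ for all } x\in\bC\}$ with $\bC\subseteq\bbU$ \emph{finite} and $\eps>0$; consequently, for a subgroup $S\le\iso{\bbU}$, an element $p$ lies in $\overline S$ exactly when each such $V(p;\bC,\eps)$ meets $S$. So I would treat the case of finite $\bC$ — which is all that is needed in the sequel, and the case for which the two formulations are literally equivalent — where the assertion unravels to: for $p\in\isom{\bA\cap\bB}{\bbU}$ every $V(p;\bC,\eps)$ meets $\langle\isom{\bA}{\bbU},\isom{\bB}{\bbU}\rangle$.

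Granting Theorem~\ref{sec:isom-rand-order-gm}, the proof is then two lines: given $p\in\isom{\bA\cap\bB}{\bbU}$, a finite $\bC\subseteq\bbU$ and $\eps>0$, Theorem~\ref{sec:isom-rand-order-gm} yields $p\in\overline{\langle\isom{\bA}{\bbU},\isom{\bB}{\bbU}\rangle}$; since $V(p;\bC,\eps)$ is an open neighbourhood of $p$, it meets $\langle\isom{\bA}{\bbU},\isom{\bB}{\bbU}\rangle$, and any $q$ in the intersection satisfies $d(p(x),q(x))<\eps$ for all $x\in\bC$, as required. To justify the word ``equivalent'' I would also record the trivial half of Theorem~\ref{sec:isom-rand-order-gm} that is invisible in the reformulation: every element of $\isom{\bA}{\bbU}$ and of $\isom{\bB}{\bbU}$ fixes $\bA\cap\bB$ pointwise, hence so does every word in them, and $\isom{\bA\cap\bB}{\bbU}=\bigcap_{x\in\bA\cap\bB}\{g:g(x)=x\}$ is closed in the pointwise topology; this gives $\overline{\langle\isom{\bA}{\bbU},\isom{\bB}{\bbU}\rangle}\subseteq\isom{\bA\cap\bB}{\bbU}$, while the displayed reformulation, with $\bC$ ranging over all finite subsets of $\bbU$, supplies the reverse inclusion.

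I do not expect any real obstacle here: the entire content is Theorem~\ref{sec:isom-rand-order-gm}, and everything else is bookkeeping about the topology of pointwise convergence and about $\isom{\bA\cap\bB}{\bbU}$ being closed. The only point that needs a word of care is the quantifier structure — the approximant $q$ is allowed to depend on $\bC$ and on $\eps$ — which is precisely what ``$p$ lies in the closure of $\langle\isom{\bA}{\bbU},\isom{\bB}{\bbU}\rangle$'' delivers for finite $\bC$.
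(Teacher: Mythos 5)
The paper gives no explicit proof of this statement: it simply announces it as ``an equivalent reformulation'' of Theorem~\ref{sec:isom-rand-order-gm} and moves on. Your proposal spells out precisely the equivalence the paper leaves implicit — namely, that $p\in\overline S$ in the pointwise-convergence topology on $\iso{\bbU}$ means exactly that every basic neighbourhood $V(p;\bC,\eps)$ (with $\bC$ finite) meets $S$ — so you are taking the same route, just saying it out loud. You are also right to flag that the displayed statement writes ``$\bC\subseteq\bbU$'' without the word ``finite''; as a literal uniform statement over an arbitrary $\bC$ it would be strictly stronger than the closure assertion, and the subsequent use in Theorem~\ref{sec:isom-rand-order-noeps} (where $\bC\cup p(\bC)$ is taken to be a finite metric space carrying a partial isometry) confirms that finite $\bC$ is the intended reading. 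Your argument is correct.
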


We show that one can actually eliminate the epsilon in the above
reformulation.

\begin{theorem}
  \label{sec:isom-rand-order-noeps}
   Let $\bbU$ be the Urysohn space, let $\bA, \bB \subset
  \bbU$ be two finite subsets. Then for any $p \in
  \isom{\bA \cap \bB}{\bbU}$ for any $\bC \subseteq \bbU$ there is $q \in \langle \isom{\bA}{\bbU},
    \isom{\bB}{\bbU} \rangle$ such that 
  \[\forall x \in \bC\ p(x) = q(x).\]
\end{theorem}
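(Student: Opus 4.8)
The plan is to upgrade Melleray's approximate statement, Theorem~\ref{sec:isom-rand-order-gm2}, to an exact one, using the extension machinery built above to absorb the residual error. Since any $q\in\iso{\bbU}$ is determined on a dense set and $q|_{\bC}=p|_{\bC}$ implies $q|_{\overline{\bC}}=p|_{\overline{\bC}}$ by continuity, it suffices to treat $\bC$ finite; a countable $\bC$ (hence, by continuity, an arbitrary one) is then handled by exhausting it with finite sets and running the finite construction coherently, so that the finitely many factors it produces converge inside the closed subgroups $\isom{\bA}{\bbU}$ and $\isom{\bB}{\bbU}$. So let $\bC$ be finite; enlarge it so that $\bA\cup\bB\subseteq\bC$, and discard the trivial case in which $\bA$ or $\bB$ is empty, where the subgroup is all of $\iso{\bbU}$.

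First I would reduce to the case that $p$ is close to the identity on $\bC$. Given $\eps>0$, Theorem~\ref{sec:isom-rand-order-gm2} yields $q_1\in\langle\isom{\bA}{\bbU},\isom{\bB}{\bbU}\rangle$ with $d(q_1(x),p(x))<\eps$ for every $x\in\bC$. Every element of the subgroup fixes $\bA\cap\bB$ pointwise, as does $p$, so $e:=q_1^{-1}p$ again fixes $\bA\cap\bB$ pointwise and now moves every point of $\bC$ by less than $\eps$. Since finding $q_2$ in the subgroup with $q_2|_{\bC}=e|_{\bC}$ suffices (then $q=q_1q_2$ works), we may assume henceforth that $p$ fixes $\bA\cap\bB$ pointwise and displaces $\bC$ by less than a prescribed $\eps$.

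Next I would route $\bC$ through a position that is free over $\bA\cap\bB$. Elements of the subgroup preserve all distances to $\bA\cap\bB$, so one cannot push $\bC$ far away; the usable form of genericity is freeness over the fixed set $\bA\cap\bB$, and this is what Theorem~\ref{thm:isomext} (in the ordered/metric form of Remark~\ref{sec:main-remark}, resting on Corollary~\ref{cor:slawek} and Solecki's Theorem~\ref{thm:solecki}) supplies. Assume $\bA\cap\bB\ne\es$ (the empty case is handled separately and more easily, via the free-amalgamation clause for an empty base). Apply Theorem~\ref{thm:isomext} with $\bA_0=\bC$ and with $p_0=\mathrm{id}_{\bA}$, $q_0=\mathrm{id}_{\bB}$, so that $\fix{p_0}\cap\fix{q_0}=\bA\cap\bB$: it produces extensions $\bar p_0,\bar q_0$ and a word $w\in F(s,t)$ with $w(\bar p_0,\bar q_0)$ defined on $\bC$ and $\dom{\bar p_0}\cup w(\bar p_0,\bar q_0)(\bC)$ the free amalgam over $\bA\cap\bB$. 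By ultrahomogeneity of $\bbU$, extend $\bar p_0$ and $\bar q_0$ to global isometries $f\in\isom{\bA}{\bbU}$, $g\in\isom{\bB}{\bbU}$; then $r:=w(f,g)$ lies in the subgroup and $r(\bC)$ is freely placed over $\bA\cap\bB$ with respect to $\bA\cup\bB$. Now $rpr^{-1}$ fixes $\bA\cap\bB$ pointwise and still moves $r(\bC)$ by less than $\eps$. Because $r(\bC)$ is free over $\bA\cap\bB$, one builds a finite metric space on $\bA\cup\bB\cup r(\bC)\cup\bE$, where $\bE$ is a fresh copy of $r(\bC)$ whose distances to $\bA$ match those of $r(\bC)$ and whose distances to $\bB$ match those of $rpr^{-1}(r(\bC))$, fills the remaining cross-distances by shortest paths, and --- the delicate point --- checks that for $\eps$ small enough relative to the geometry of $\bC\cup\bA\cup\bB$ all triangle inequalities hold. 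Embedding this space into $\bbU$ and using ultrahomogeneity realizes $(rpr^{-1})|_{r(\bC)}$ as $\sigma|_{r(\bC)}$ for some $\sigma=\hat\phi_2\hat\phi_1$ with $\hat\phi_1\in\isom{\bA}{\bbU}$, $\hat\phi_2\in\isom{\bB}{\bbU}$; then $q_2:=r^{-1}\sigma r$ is in $\langle\isom{\bA}{\bbU},\isom{\bB}{\bbU}\rangle$ and $q_2|_{\bC}=p|_{\bC}$.

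The hard part is exactly this amalgamation, together with the production of the word $w$: everything takes place relative to the permanently fixed set $\bA\cap\bB$, and a naive two-factor decomposition $p|_{\bC}=\hat\phi_2\hat\phi_1$ is impossible, because any element of $\isom{\bA}{\bbU}$ must be the identity on $\bA\cap\bC$ and so cannot, by itself, undo the displacement $p$ induces on $\bB$ relative to $\bA\cap\bB$. The remedy is to first spend the word $w$ to reach a generic-over-$\bA\cap\bB$ position, and carrying this out requires keeping precise track throughout of which points remain fixed, of how the moving copies of $\bC$ meet $\bA$, $\bB$ and each other, and --- for the later passage to $\bbQU$ --- of the densities; this bookkeeping is precisely what Solecki's Theorem~\ref{thm:solecki}, Corollary~\ref{cor:slawek} and Theorem~\ref{thm:isomext} are designed to provide.
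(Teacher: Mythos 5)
Your outline correctly identifies the crucial ingredient---Theorem~\ref{thm:isomext} produces a word $w$ that conjugates $\bC$ into a position free over $\bA\cap\bB$ relative to $\bA\cup\bB$---and you correctly observe that no na\"ive two-factor splitting of $p|_{\bC}$ can work, so one must first conjugate. But the detour you take through Melleray's approximate theorem and the subsequent ``small-$\eps$ correction'' both deviates from the paper's argument and, as written, contains a genuine gap at exactly the step you flag as delicate.

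The gap: you apply Theorem~\ref{thm:isomext} to $\bC$ alone, so only $r(\bC)$ lands in the free amalgam over $\bA\cap\bB$. The point $rpr^{-1}(r(c))=r(p(c))$ need not lie in that amalgam, so $d(r(p(c)),b)$ for $b\in\bB$ is not given by the shortest-path formula through $\bA\cap\bB$; one only gets $d(r(c),b)-\eps\le d(r(p(c)),b)\le d(r(c),b)$, with no control over where in this interval the value falls (it depends on how $\bar p_0,\bar q_0$ are extended to global isometries $f,g$, which you do not constrain). Consequently the triangle inequality $d(e,b)+d(b,a)\ge d(e,a)$ you need for the space $\bA\cup\bB\cup r(\bC)\cup\bE$ can fail when the inequality $d(r(c),b)+d(b,a)\ge d(r(c),a)$ is tight in $\bbU$ (e.g.\ when $b$ lies on a geodesic from the optimal point of $\bA\cap\bB$ to $a$, which certainly occurs). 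Making $\eps$ small does not help: it is the \emph{exact} equality $d(r(p(c)),z)=d(r(c),z)$ for $z\in\bA\cap\bB$, together with $r(p(c))$ being \emph{itself} in the free amalgam, that is needed, not an $\eps$-approximation.

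The paper's proof avoids the issue entirely and needs neither Melleray nor a two-factor decomposition: it applies Theorem~\ref{thm:isomext} to $\bD=\bC\cup p(\bC)$ with $p_1=\mathrm{id}_{\bA}$, $p_2=\mathrm{id}_{\bB}$, so that both $w(\bC)$ and $w(p(\bC))$ sit in the free amalgam over $\bA\cap\bB$; then, since $p$ fixes $\bA\cap\bB$ pointwise, $d(w(p(c)),z)=d(p(c),z)=d(c,z)=d(w(c),z)$ for all $z\in\bA\cap\bB$, and the free-amalgam distance formula makes the map $w(c)\mapsto w(p(c))$ a legitimate extension of an isometry $q_1$ fixing $\bA$. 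Setting $q=w(q_1,q_2)^{-1}q_1 w(q_1,q_2)$ finishes the proof; the conjugated $p$ is realized by a \emph{single} element of $\isom{\bA}{\bbU}$, not a product $\hat\phi_2\hat\phi_1$. Your route would become correct if you replaced $\bC$ with $\bC\cup p(\bC)$ in the application of Theorem~\ref{thm:isomext} (after which the $\eps$ and the intermediate set $\bE$ become superfluous), which essentially collapses it back onto the paper's argument.
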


\begin{proof}
  Without loss of generality we may assume that $\bA \subseteq \bC$ and
  $\bB \subseteq \bC$. Let $\bD = \bC \cup p(\bC)$ then $p|_{\bC}$ is a
  partial isometry of $\bD$. Define two partial isometries $p_1$ and
  $p_2$ of $\bD$ by
  \[\forall x \in \bA\ p_1(x) = x,\]
  \[\forall x \in \bB\ p_2(x) = x.\]
  Apply now Theorem \ref{thm:isomext} to $p_1$, $p_2$ and $\bD$ to get
  a metric space $\bD'$ and extension $q_1$ of $p_1$ and $q_2$ of
  $p_2$, and a word $w \in F_2$. Extend now $q_1$ by
  \[\forall x \in \bC\ q_1(w(q_1,q_2)(x)) = w(q_1,q_2)(p(x)).\]
  Such a $q_1$ is then a partial isometry of $\bD'$. Now extend $q_1$
  and $q_2$ to full isometries (we still denote them by the same
  symbols) and set
  \[q = w^{-1}(q_1,q_2)q_1 w(q_1,q_2).\]
  Then for any $x \in \bC$, $p(x) = q(x)$, and $q_1 \in
  \isom{\bA}{\bbU}$, $q_2 \in \isom{\bB}{\bbU}$.
\end{proof}

Note that if we start from metric spaces with rational
distances, then the space $\bD'$, constructed in the proof, would also
have rational distances. And we arrive at the

\begin{corollary}
  Let $\bbQU$ be the rational Urysohn space, let $\bA, \bB \subset
  \bbQU$ be two finite subsets. Then
  \[\isom{\bA \cap \bB}{\bbQU} = \overline{ \langle \isom{\bA}{\bbQU},
    \isom{\bB}{\bbQU} \rangle}.\]
\end{corollary}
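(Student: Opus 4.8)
The plan is to check the two inclusions separately: $\supseteq$ is soft, while $\subseteq$ is the rational transcription of the proof of Theorem~\ref{sec:isom-rand-order-noeps}, which was arranged so as to respect rationality of distances (this is precisely the content of the preceding Remark).

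For $\supseteq$: since $\bA\cap\bB\subseteq\bA$ and $\bA\cap\bB\subseteq\bB$, any isometry of $\bbQU$ fixing $\bA$ pointwise, or $\bB$ pointwise, a fortiori fixes $\bA\cap\bB$ pointwise; hence $\isom{\bA}{\bbQU},\isom{\bB}{\bbQU}\subseteq\isom{\bA\cap\bB}{\bbQU}$ and so $\langle\isom{\bA}{\bbQU},\isom{\bB}{\bbQU}\rangle\subseteq\isom{\bA\cap\bB}{\bbQU}$. The pointwise stabiliser of a set is a closed subgroup of $\iso{\bbQU}$, so passing to the closure on the left does not leave $\isom{\bA\cap\bB}{\bbQU}$.

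For $\subseteq$: recall that $\iso{\bbQU}$ carries the topology of pointwise convergence on the discrete set $\bbQU$, so a basic neighbourhood of $p\in\iso{\bbQU}$ has the form $\{r\in\iso{\bbQU}:r|_{\bC}=p|_{\bC}\}$ with $\bC\subseteq\bbQU$ finite. Thus it suffices to prove the rational analogue of Theorem~\ref{sec:isom-rand-order-noeps}: for every $p\in\isom{\bA\cap\bB}{\bbQU}$ and every finite $\bC\subseteq\bbQU$ there is $q\in\langle\isom{\bA}{\bbQU},\isom{\bB}{\bbQU}\rangle$ with $q|_{\bC}=p|_{\bC}$. One then copies the proof of Theorem~\ref{sec:isom-rand-order-noeps} line by line: enlarge $\bC$ so that $\bA,\bB\subseteq\bC$; set $\bD=\bC\cup p(\bC)$, a finite metric space with rational distances on which $p|_{\bC}$ is a partial isometry; let $p_1$ be the identity on $\bA$ and $p_2$ the identity on $\bB$, viewed as partial isometries of $\bD$; apply Theorem~\ref{thm:isomext} to $p_1,p_2,\bD$ (using Remark~\ref{sec:main-remark} if one wishes to keep track of an ordering) to obtain a finite metric space $\bD'\supseteq\bD$, extensions $q_1\supseteq p_1$, $q_2\supseteq p_2$ and a word $w\in F(s,t)$; extend $q_1$ across $w(q_1,q_2)(\bC)$ by $q_1(w(q_1,q_2)(x))=w(q_1,q_2)(p(x))$ for $x\in\bC$; lift $q_1,q_2$ to full isometries of $\bbQU$ by ultrahomogeneity; and put $q=w^{-1}(q_1,q_2)\,q_1\,w(q_1,q_2)$. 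Then $q_1\in\isom{\bA}{\bbQU}$, $q_2\in\isom{\bB}{\bbQU}$, and $q|_{\bC}=p|_{\bC}$.

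The only point requiring verification beyond the classical argument is that the auxiliary spaces $\bD$ and $\bD'$ have rational distances, so that each partial isometry produced extends to a full isometry \emph{of $\bbQU$}. But $\bD$ is a subspace of $\bbQU$, hence rational, and the ``moreover'' clauses of Theorem~\ref{thm:solecki}, Corollary~\ref{cor:slawek} and Theorem~\ref{thm:isomext} force every distance occurring in $\bD'$ to lie in the additive semigroup generated by the distances of $\bD$, hence to be rational again. So there is no genuine obstacle here; the corollary is exactly the observation that the construction of Theorem~\ref{sec:isom-rand-order-noeps} is rational-preserving. (The only delicate hypothesis inherited from that proof is $\fix{p_1}\cap\fix{p_2}\ne\es$, i.e.\ $\bA\cap\bB\ne\es$, needed to apply Theorem~\ref{thm:isomext}.)
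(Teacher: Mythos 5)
Your proposal is correct and follows essentially the same route as the paper: the paper's own proof of this corollary is precisely the observation that the construction in Theorem~\ref{sec:isom-rand-order-noeps} (via Theorem~\ref{thm:isomext} and its ``moreover'' clause on distances lying in the additive semigroup generated by the given ones) produces only rational distances, so the whole argument transfers verbatim to $\bbQU$. The caveat you flag about needing $\bA\cap\bB\ne\es$ to invoke Theorem~\ref{thm:isomext} is a fair observation, but it is inherited from the paper's proof of Theorem~\ref{sec:isom-rand-order-noeps} itself rather than a defect of your adaptation.
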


\section{Isometries of the Randomly Ordered Urysohn Space}
\label{sec:ordUrysohn}

There is a rich variety of linearly ordered \Fraisse limits, of which
countable dense linear ordering without endpoints is
the simplest example. In fact, as proved in \cite{KPT}, if the group
of automorphisms of a particular \Fraisse class $\cK$ is extremely
amenable, then there is a linear ordering on the \Fraisse limit of
$\cK$ that is preserved by all automorphisms.

We consider another example of a linearly
ordered \Fraisse limit: randomly ordered rational Urysohn space
$\oUr$.

Let us briefly remind
the definition of this structure. Formally speaking, one has to
consider the \Fraisse class $\cM$ of finite ordered metric spaces with
rational distances. Then $\oUr$ is, by definition, the \Fraisse limit of
$\cM$. Intuitively one can think of this structure as a classical
rational Urysohn space 
with a linear ordering on top (such that ordering is
isomorphic to the ordering of the rationals) and such that this
ordering is independent of the metric structure.

Our goal is to prove that every two-dimensional class of topological
similarity in the group of automorphisms of $\oUr$ is meager. We would
like to emphasize that the structure of conjugacy classes in
$\aut{\Q}$ and $\aut{\oUr}$ is substantially different. As was
mentioned earlier there is a generic conjugacy class in $\aut{\Q}$,
while it is not hard to derive from results in \cite{KR}, that each
conjugacy class in $\aut{\oUr}$ is meager.





Recall (see \cite{KR}, Definition 3.3)
\begin{definition}
  Class $\cK$ of finite structures satisfies {\it weak amalgamation
    property\/} (WAP for short) if for every $\bA \in \cK$ there is
  $\bB \in \cK$ and an embedding $e : \bA \ra \bB$ such that for all
  $\bC \in \cK$, $\bD \in \cK$ and all embeddings $i : \bB \ra \bC$,
  $j : \bB \ra \bD$ there are $\bE \in \cK$ and embeddings $k : \bC
  \ra \bE$, $l : \bD \ra \bE$ such that $k \circ i \circ e = l \circ j
  \circ e$, i.e, in the following diagram paths from $\bA$ to $\bE$
  commute (but not necessarily paths from $\bB$ to $\bE$).

\[\xymatrix{    &     & \bC \ar@{-->}[dr]^{k} &     \\
          \bA \ar@{-->}[r]^{e} & \bB \ar[ur]^{i} \ar[dr]^{j}&     & \bE \\
              &     & \bD \ar@{-->}[ur]^l&     }\]

            Class $\cK$ satisfies {\it local weak amalgamation
              property} if for some $\bA \in \cK$ weak amalgamation
            holds for the class of structures $\bB \in \cK$ that
            extend $\bA$.
\end{definition}

\begin{definition}
  Let $\cK$ be a \Fraisse class. We associate with it a class of
  structures $\cKp$. Elements of $\cKp$ are partial isomorphisms of
  $\cK$, more precisely tuples
  \[(\bA; p : \bA' \to \bA''),\] where $\bA$, $\bA'$ and $\bA'' \in
  \cK$, $\bA'$, $\bA'' \subseteq \bA$ and $p$ is an isomorphisms.
\end{definition}




\begin{theorem}[Kechris--Rosendal, see \cite{KR}, Theorem
  3.7]\label{thm:localWAP}
  Group of automorphisms of a \Fraisse class $\cK$ has a non-meager
  conjugacy class if and only if class $\cKp$ satisfies local weak
  amalgamation property.
\end{theorem}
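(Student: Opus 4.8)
\emph{Strategy and reduction.}
The plan is to prove this Kechris--Rosendal characterization by applying the Banach--Mazur (back-and-forth) method to the conjugation action of $G = \aut{\bbK}$ on itself, translating ``non-meager conjugacy class'' into a forcing-style statement about finite approximations and then recognizing that statement as local WAP. The conjugation action of $G$ on $G$ is a continuous action of a Polish group on a Polish space whose orbits are exactly the conjugacy classes. Each orbit is analytic, hence has the Baire property, so an orbit is non-meager iff it is comeager in some nonempty open set. The sets $N_p = \{g \in G : g \text{ extends } p\}$, with $p$ a partial isomorphism of $\bbK$, form a basis, and by ultrahomogeneity every partial isomorphism of $\bbK$ extends to an automorphism; hence $G$ has a non-meager conjugacy class iff there are a partial isomorphism $p_0$ and an $f$ extending $p_0$ whose conjugacy class is comeager in $N_{p_0}$. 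Identifying $p_0$ with $\bA_\ast = (\dom{p_0} \cup \ran{p_0};\, p_0) \in \cKp$, it remains to show that such a $p_0$ exists iff, for some $\bA_\ast$, the subclass of $\cKp$ consisting of the objects extending $\bA_\ast$ has WAP.

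\emph{From local WAP to a comeager orbit.}
Suppose WAP holds in $\cKp$ above $\bA_\ast$. A short maneuver gives JEP there as well: replace $\bA_\ast$ by its own WAP-witness, over which any two extensions now amalgamate. Let $p_\ast$ be the associated partial isomorphism and $U = N_{p_\ast}$. For each WAP-witness $\bB$ above $\bA_\ast$, the set $V_{\bB} = \{g \in U : \text{some finite restriction of } g \text{ realizes a copy of } \bB\}$ is dense open in $U$ (density uses JEP below $\bA_\ast$); call $g$ \emph{rich} if $g \in V_{\bB}$ for every such $\bB$, so that the rich automorphisms form a comeager subset of $U$. One then checks that any two rich $g, h \in U$ are conjugate by a back-and-forth building $\alpha = \bigcup_n \alpha_n$ and maintaining coherent finite data --- approximations $q^g_n \subseteq g$ and $q^h_n \subseteq h$ extending $p_\ast$, with $\alpha_n$ a partial isomorphism conjugating $q^g_n$ to $q^h_n$. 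To extend the domain of $\alpha_n$ (``forth''), enlarge $q^g_n$ by a finite piece of $g$; WAP applied to the current coherent configuration and that piece yields an amalgam over $\bA_\ast$, and richness of $h$ forces a copy of it inside $h$, which dictates how to extend $\alpha_n$; ``back'' is symmetric. Hence all rich automorphisms of $U$ lie in one conjugacy class, which therefore contains a comeager subset of $U$ and is non-meager in $G$.

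\emph{From a comeager orbit to local WAP.}
Conversely, suppose the conjugacy class $\mathcal{O}$ of $f$ is comeager in $U = N_{p_0}$; put $\bA_\ast = (\dom{p_0} \cup \ran{p_0};\, p_0)$. Comeagerness forces $\mathcal{O}$ to meet densely every nonempty $N_r \subseteq U$, i.e., every object of $\cKp$ above $\bA_\ast$ is realized inside some member of $\mathcal{O}$. Given $\bA$ above $\bA_\ast$, realize it as a genuine partial isomorphism $p_\bA$ extending $p_0$, pick $g_0 \in \mathcal{O} \cap N_{p_\bA}$ inside a suitable comeager set of ``decided-enough'' automorphisms, and take $\bB$ to be a large enough finite restriction of $g_0$ lying above $\bA$. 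For $\bC, \bD$ above $\bB$, realize them as restrictions of $g, h \in \mathcal{O} \cap N_{p_\bB}$; a conjugation sending $g$ to $h$ carries the copy of $\bC$ in $g$ onto a copy of $\bC$ in $h$, and the union of that copy with the copy of $\bD$ already present in $h$ is the required amalgam over $\bA$ --- \emph{provided} the conjugating automorphism can be taken to fix $\dom{p_\bA} \cup \ran{p_\bA}$ pointwise, so that the two copies of $\bA$ inside $h$ genuinely coincide.

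\emph{Main obstacle.}
That proviso is the whole difficulty, and it is exactly what makes the property \emph{weak} (amalgamation only over the small base $\bA$, not over $\bB$) and \emph{local} (over a possibly small $N_{p_0}$ rather than all of $G$). The remedy is to shrink $p_0$ and enlarge $\bB$ enough that $\mathcal{O} \cap N_{p_\bB}$ is a single orbit --- or at least has a comeager orbit --- under the closed subgroup of automorphisms fixing $\dom{p_\bA} \cup \ran{p_\bA}$ pointwise; genericity of $g_0$ is what makes this relative homogeneity available. Verifying it, and managing the bookkeeping of which finite conditions $\bB$ exhibit it, is the technical heart of the theorem; the two back-and-forth arguments around it are routine Baire-category constructions.
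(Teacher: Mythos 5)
The paper does not prove this statement; it is cited verbatim from Kechris--Rosendal \cite{KR}, so there is no in-paper argument to compare your proposal against. Evaluating the proposal on its own terms, the overall strategy is the right one (realize non-meagerness as comeagerness in some $N_{p_0}$, then run a back-and-forth powered by weak amalgamation), but there are two substantive problems.

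In the direction ``local WAP $\Rightarrow$ non-meager class,'' the notion of rich you use is too weak to drive the back-and-forth. You define $V_{\bB}$ by ``\emph{some} finite restriction of $g$ realizes a copy of $\bB$.'' But at the forth step you need a copy of the new configuration inside $h$ that \emph{extends the current finite approximation} $q^h_n$, not merely a copy somewhere in $h$. The dense open sets must be indexed by pairs (a finite condition $q$, an extension type $\bB$), requiring ``whenever $q\subseteq g$ realizes $\bA$ there is $q'\supseteq q$ in $g$ realizing the WAP-witness of $\bB$,'' and one has to keep track, via an invariant, that $q^g_n, q^h_n$ are isomorphic WAP-witnesses at each stage so that WAP actually closes the diagram. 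As written, the maintenance of coherence is not established.

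In the direction ``non-meager class $\Rightarrow$ local WAP,'' you correctly isolate the obstacle---the conjugating $\alpha$ with $\alpha g\alpha^{-1}=h$ need not fix $\bA$ pointwise, so $\alpha(\bC)\cup\bD$ inside $h$ is not obviously an amalgam over $\bA$---but you do not resolve it. The proposed remedy (shrink $p_0$, enlarge $\bB$, and assert that the stabilizer of $\bA$ acts on $\mathcal{O}\cap N_{p_{\bB}}$ with a comeager orbit) is stated as a hoped-for ``relative homogeneity'' and explicitly left unverified; it is also not clear why \emph{shrinking} $p_0$ should help, since that enlarges $N_{p_0}$ and weakens the comeagerness hypothesis. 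This converse is precisely the non-trivial content of the theorem, so leaving it as ``the technical heart'' means the proposal contains a genuine gap rather than a proof.
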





\begin{proposition}
  Every conjugacy class in $\aut{\oUr}$ is meager.
\end{proposition}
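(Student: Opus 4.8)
The plan is to invoke Theorem~\ref{thm:localWAP} (the Kechris--Rosendal criterion): since $\oUr$ is the \Fraisse limit of the class $\cM$ of finite ordered rational metric spaces, it suffices to show that $\cMp$ \emph{fails} the local weak amalgamation property. Unwinding the definitions, this means: for \emph{every} $\bA_0 \in \cMp$ there is an extension $\bX$ of $\bA_0$ in $\cMp$ such that for every extension $\bY = (\bF; q')$ of $\bX$ (with embedding $e : \bX \to \bY$) one can produce extensions $\bC, \bD$ of $\bY$ in $\cMp$ admitting no common amalgam over $\bX$ in $\cMp$ --- i.e.\ no $\bG = (\bH; r) \in \cMp$ with embeddings $k : \bC \to \bG$, $l : \bD \to \bG$ whose composites with the inclusions $\bY \hookrightarrow \bC$, $\bY \hookrightarrow \bD$ agree on $e(\bX)$.

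First I would fix $\bA_0$ and choose $\bX$ to be any extension of $\bA_0$ in $\cMp$ in which the partial isometry has a non-fixed point: adjoin one new point $x$ to the domain together with a new point $q(x) \neq x$ in the range, both placed above everything in the order at a common large rational distance from $\bA_0$; this yields $\bX = (\bE; q) \in \cMp$ with $q(x) \neq x$. The point of the choice is that in \emph{any} extension $\bY = (\bF; q')$ the image $e(x)$ still satisfies $q'(e(x)) = e(q(x)) \neq e(x)$, so Lemma~\ref{sec:isom-rand-order-diffext} applies to the metric space underlying $\bF$, the partial isometry $q'$, and the point $e(x)$. It produces metric extensions $\bF_1, \bF_2 \supseteq \bF$ and partial isometries $q_1', q_2' \supseteq q'$ with $\orb{q_1'}{e(x)}$ and $\orb{q_2'}{e(x)}$ disjoint over $\orb{q'}{e(x)}$, and with the \emph{moreover} properties $\per{q_i'} = \per{q'}$ and ``$q_i'(z) \in \bF \iff z \in \dom{q'}$''. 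These \emph{moreover} clauses are exactly the hypotheses of Proposition~\ref{sec:isom-to-ord}, so I may extend the ordering of $\bF$ to orderings of $\bF_1, \bF_2$ turning $q_1', q_2'$ into partial isomorphisms of ordered metric spaces; choosing the bridging distances in Lemma~\ref{sec:isom-rand-order-diffext} rational (only finitely many values must be avoided) keeps all distances rational. Thus $\bC := (\bF_1; q_1')$ and $\bD := (\bF_2; q_2')$ lie in $\cMp$ and extend $\bY$.

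The remaining --- and main --- point is that $\bC$ and $\bD$ have no common amalgam over $\bX$. Suppose $\bG = (\bH; r) \in \cMp$ with embeddings $k : \bC \to \bG$, $l : \bD \to \bG$ agreeing on $e(\bX) \subseteq \bY$; let $\xi$ be their common value at $e(x)$. Because an embedding in $\cMp$ intertwines the partial isometry with $r$, the restriction of $k$ to the orbit $\orb{q_1'}{e(x)}$ is forced: it sends $(q_1')^{m}(e(x))$ to $r^{m}(\xi)$ for every relevant $m$ (using injectivity of $r$ for negative $m$), and symmetrically $l$ sends $(q_2')^{m}(e(x))$ to $r^{m}(\xi)$. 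Hence $k$ and $l$ agree on the common sub-orbit $\orb{q'}{e(x)}$, and --- since the two extensions produced by Lemma~\ref{sec:isom-rand-order-diffext} differ only in the metric attached to one new point reached by the same power of the partial isometry --- their images coincide; so $l^{-1}k$, restricted to $\orb{q_1'}{e(x)}$, would be an isomorphism of $\orb{q_1'}{e(x)}$ onto $\orb{q_2'}{e(x)}$ fixing $\orb{q'}{e(x)}$ pointwise. This contradicts disjointness over $\orb{q'}{e(x)}$. Therefore no amalgam exists, $\cMp$ fails local WAP, and by Theorem~\ref{thm:localWAP} every conjugacy class in $\aut{\oUr}$ is meager.

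I expect the delicate part to be this last step. One must be precise about the notion of embedding in $\cMp$ --- in particular the domain/range reflection built into strong embeddings --- and verify that a partial-isometry orbit is rigid enough that any common amalgam identifies the two orbit-extensions over the base orbit; this is where the \emph{moreover} clauses of Lemma~\ref{sec:isom-rand-order-diffext} and Proposition~\ref{sec:isom-to-ord} (and, for orderings, the bookkeeping of Remark~\ref{sec:main-remark}) carry the weight. Everything else --- the construction of $\bX$, passing from metric to ordered-metric extensions, and preservation of rationality of distances --- should be routine.
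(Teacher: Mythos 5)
Your proposal is correct and follows essentially the same route as the paper: invoke the Kechris--Rosendal criterion (Theorem~\ref{thm:localWAP}), fail local WAP by passing to an extension with a non-fixed point, and for each further extension use Lemma~\ref{sec:isom-rand-order-diffext} together with Proposition~\ref{sec:isom-to-ord} to manufacture two structures whose partial-isometry orbits are disjoint over the base orbit. The paper's own proof is terser --- it simply asserts that disjointness of orbits rules out a weak amalgam over $\bar{\bA}$ --- whereas you spell out the mechanism (embeddings in $\cMp$ intertwine the partial isometry with $r$, forcing $k$ and $l$ to agree on the entire orbit of $e(x)$, not merely on $e(\bX)$); this is the right justification and fills in what the paper leaves implicit.
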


\begin{proof}
  By Theorem \ref{thm:localWAP} it is enough to show that the
  class $\cMp$ has no local WAP. Let $\bar{\bA} = (\bA,\ \phi : \bA'
  \ra \bA'') \in \cMp$, and assume without loss of generality that
  $\phi$ has at least one non-fixed point (otherwise take an extension
  of $\phi$). We claim that the class of structures that extend $\bA$
  does not have WAP.

  Fix $\bar{\bB} = (\bB, \psi : \bB' \ra \bB'')$ that extends $\bA$
  and assume for notational simplicity that $\bA \subseteq \bB$. Let
  $x \in \bA'$ be such that $\phi(x) \ne x$ and let $\orb{\phi}{x}$ be
  the orbit of $x$ under $\phi$.  Then $\orb{\psi}{x} \supseteq
  \orb{\phi}{x}$. Now take (by Lemma \ref{sec:isom-rand-order-diffext}
  and Proposition \ref{sec:isom-to-ord}) two structures $\bar{C} =
  (\bC,\ \sigma: \bC' \ra \bC'') \in \cMp$, $\bar{D} = (\bD,\ \tau:
  \bD' \ra \bD'') \in \cMp$ with embeddings $i : \bar{\bB} \ra
  \bar{\bC}$, $j : \bar{\bB} \ra \bar{\bD}$ and such that $\orb{\sigma
    \circ i}{x}$ and $\orb{\tau \circ j}{x}$ are disjoint over
  $\orb{\phi}{x}$. Then there is no weak amalgamation of $\bar{\bC}$
  and $\bar{\bD}$ over $\bar{\bB}$ and $\bar{\bA}$.
\end{proof}

With classes of topological similarity the situation is rather
different. All non-trivial elements in $\aut{\oUr}$ fall into a single
class of topological similarity. And more generally, if $\mathbb{K}$
is any countable linearly ordered structure and $\aut{\mathbb{K}}$ is
endowed with a topology of pointwise convergence ($\mathbb{K}$ is
discrete here), then
$\aut{\mathbb{K}}$ has exactly two classes of topological similarity
(unless $\aut{\mathbb{K}} = \{id\}$, then, of course, there is only one): all
non-trivial automorphisms generate a discrete copy of $\Z$ and hence fall
into a single class. Thus, in spite of the previous proposition, it
makes sense to ask if there is a non-meager two-dimensional similarity
class in $\aut{\oUr}$.

We define the notions of elementary and piecewise elementary pairs of
partial isomorphisms of $\oUr$ and the notion of liberation exactly like for
the partial isomorphisms of the rationals.

It turns out that the analog of Theorem \ref{thm:rationals} for the
randomly ordered Urysohn space takes place. Let us first briefly
sketch the idea of the proof before diving into the details. We will
prove that, again, for a generic pair there is a sequence of reduced
words, such that this pair converges along it. One can repeat all the
arguments up to Lemma \ref{main-lemma} (only obvious changes are
necessary). So one gets for a piecewise elementary pair $(p,q)$ a
triple $(p',q',w)$ that liberates $p$ in $(p,q)$. But now, opposite to
the case of rationals, one can not in general declare that
$p'(w(p',q')(c)) = c$ for $c \in \essen{p} \cup \essen{q}$, since such
a $p'$ may be not an isometry. At this moment we have to take further
extensions of $p'$ and $q'$. But once an analog of Lemma
\ref{main-lemma} is proved for the Urysohn case, the rest of Theorem
\ref{thm:rationals} goes unchanged.


If $p$ is a partial isometry, we can use amalgamation of its domain
with a one point metric space over an empty set to add a fixed point
for $p$. Using this observation the following two lemmata, which are
analogs of Lemma \ref{liberating-general} and Lemma
\ref{liberating-preserve}, are proved like for the rationals, and we
omit the details.

\begin{lemma}\label{lem:ur-libpres}
  Let $(p,q)$ be a piecewise elementary pair of partial isomorphisms
  of $\oUr$ and assume a triple $(p',q',w)$ liberates $p$ [liberates
  $q$] in $(p,q)$. Let $u = t^nv$ [$u = s^mv$] be a reduced word such
  that $uw$ is irreducible. Then there is a triple $(p'',q'',uw)$ that
  liberates $p$ [liberates $q$] in $(p,q)$. Moreover, one can take
  $p''$ to be an extension of $p'$ and $q''$ to be an extension of
  $q'$.
\end{lemma}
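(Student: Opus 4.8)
The plan is to mimic, word for word, the proof of Lemma~\ref{liberating-preserve}, replacing each ``extend the partial isomorphism so that the next image is defined'' move with the corresponding move for partial isometries, which is legitimate because the rational Urysoh space is itself a \Fraisse limit and any partial isometry of a finite ordered metric space can be freely extended to include a prescribed point in its domain. Since liberation for piecewise elementary pairs is defined component-wise, it suffices to treat the case where $(p,q)$ is elementary; I will fix that reduction first and then induct on $|u|$.

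For the inductive step, I would split on whether the syllable of $u$ to be absorbed is a power of $t$ or a power of $s$. If $u = u_1 t^k$ with $k$ of the same sign as the leading power $l$ of $w$ (forced because $uw$ is assumed reduced), then I extend $q'$ to $q'_1$ so that $(t^k w)(p', q'_1)(c)$ is defined for every $c \in \essen{p} \cup \essen{q}$, keeping the monotonicity interval $J$ of item~\eqref{def:elem:monot} (and enlarging it if necessary, using that we only add points into the orbit): condition~\eqref{def:elem:monot} then propagates automatically, giving that $(p', q'_1, t^k w)$ liberates $p$. If instead $u = u_1 s^k$ with $k \ne 0$, I extend $p'$ to $p'_1$ so that $(s^k w)(p'_1, q')(c) > \max(\essen{p'} \cup \essen{q'})$ for all relevant $c$ — here ``$>$'' is replaced by ``lies in a fresh $q'$-monotone interval beyond the essential points,'' which is exactly what is needed to verify that $(p'_1, q', s^k w)$ liberates $q'$ in $(p', q')$; then one more application of the (already-established) liberation-composition for a power of $t$ brings us back to $p$. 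Either way the word grows by exactly one syllable and the hypotheses of the next inductive call hold. Finally, one replaces $(p', q')$ by $(p'_1, q')$ or $(p', q'_1)$ and continues, after $|u|$ steps obtaining $(p'', q'', uw)$ with $p''$ extending $p'$ and $q''$ extending $q'$.

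The one genuinely new point compared to the rational case is that the extensions have to be \emph{metric} extensions, not just order extensions, so I cannot simply ``declare'' a new value for $p'$ at a point; I must invoke the amalgamation property of the class of finite (ordered) rational metric spaces — equivalently, Corollary~\ref{cor:slawek} and Remark~\ref{sec:main-remark}, or directly the ultrahomogeneity of $\oUr$ — to realize the required new image isometrically and compatibly with the ordering. In particular, when I need the new point to land inside a prescribed $q'$-monotone interval $J$, I should use Proposition~\ref{sec:isom-to-ord} (applied at that step) to guarantee that the freely amalgamated point can be placed in the correct order position inside $J$; the distances stay in the semigroup generated by the existing ones, so the construction stays within $\cM$. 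I expect this metric-realizability bookkeeping — verifying that at each step the freely added point can simultaneously be made isometric, order-correct, and located in the monotone interval demanded by clause~\eqref{def:elem:monot} — to be the only real obstacle, and it is handled exactly as in the analogous lemmas of Section~\ref{sec:extens-part-isom}; the combinatorics of the induction on $|u|$ is identical to Lemma~\ref{liberating-preserve} and requires no change. Hence we omit the routine details, which parallel those of Lemma~\ref{liberating-preserve} verbatim, with extensions furnished by amalgamation of finite ordered metric spaces instead of by fiat.
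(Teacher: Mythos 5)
Your proposal takes essentially the same approach as the paper, which itself only states that this lemma ``is proved like for the rationals, and we omit the details,'' after observing that a partial isometry can be extended by a new fixed point via free amalgamation of its domain with a singleton over the empty set. Your induction on $|u|$ with the case split on the trailing syllable is exactly the proof of Lemma~\ref{liberating-preserve}, and your appeal to amalgamation of finite ordered rational metric spaces together with Proposition~\ref{sec:isom-to-ord} covers the same realizability bookkeeping the paper delegates to that one-point-amalgamation observation (which is in fact the cleanest way to see that the new point has no metric constraints on its order position and so can be placed in the required interval).
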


\begin{lemma}\label{lem:ur-libgen}
  Let $(p,q)$ be a piecewise elementary pair of partial isomorphisms
  of $\oUr$ and $u \in F(s,t)$ be a reduced word. Then there is a
  triple $(p',q',vu)$ that liberates $p$ in $(p,q)$ [liberates $q$]
  and such that $|vu| = |v| + |u|$.
\end{lemma}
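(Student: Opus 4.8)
The plan is to transcribe the proof of Lemma \ref{liberating-general}, running an induction on the number of elementary components of $(p,q)$; I treat only the case of liberating $p$, the case of $q$ following by interchanging $s$ with $t$ and $p$ with $q$. The single structural point to keep in mind throughout is that $\oUr$ is the \Fraisse limit of the class of finite ordered rational metric spaces, so every finite ordered rational metric space built in the argument embeds into $\oUr$ and every partial isomorphism between finite subsets extends to an automorphism; hence all the extension steps below may be performed inside $\oUr$ as soon as they have been realised abstractly as finite ordered metric spaces.

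For the base of the induction one needs the Urysohn analogue of Lemma \ref{liberating-elem-strong}: for an elementary pair $(p,q)$ in $\oUr$ and a reduced word $u$ there are extensions $p' \supseteq p$, $q' \supseteq q$ and a word $v$ with $|vu| = |v| + |u|$ such that $(p',q',vu)$ liberates $p$ in $(p,q)$. Its proof copies Lemmas \ref{liberating-elem-weak} and \ref{liberating-elem-strong} line by line: one first arranges $\essen{p} \ne \es$, $\essen{q} \ne \es$ and puts the intervals of monotonicity into the configuration described there, and one then pushes the image of $\alpha = \min(\essen{p} \cup \essen{q})$ through the successive intervals of monotonicity of $p$ and $q$ and finally past $\max(\essen{p'})$, reading off the liberating word. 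Two points differ from the rational case. First, the ``adjoin a fixed endpoint'' manoeuvre, needed to make a partial isomorphism informative, is here realised by freely amalgamating the current domain with a one-point metric space over $\es$ (placing the new point at the order-minimum or order-maximum), which always produces a legitimate finite ordered rational metric space. Second, each extension must again be a partial isometry of an ordered metric space; this is exactly what Corollary \ref{cor:slawek} supplies, with Remark \ref{sec:main-remark} and Proposition \ref{sec:isom-to-ord} carrying a compatible linear order along at each stage. The equality $|vu| = |v| + |u|$ is obtained, as over $\Q$, from the Urysohn analogue of Remark \ref{fg-word-sym}.

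For the inductive step, suppose $(\bar p_r, \bar q_r, w_r)$ liberates $p_r$ in $(p_r, q_r)$, the restriction of $(p,q)$ to its first $r$ elementary components. Applying the base case to the $(r+1)$-st component with the word $w_r$ produces a word $w_{r+1} = v_{r+1} w_r$ (no cancellation) and extensions $\bar p_{r+1} \supseteq \bar p_r$, $\bar q_{r+1} \supseteq \bar q_r$ for which $(\bar p_{r+1}, \bar q_{r+1}, w_{r+1})$ liberates $p$ restricted to that component; and prepending $v_{r+1}$ preserves liberation on the first $r$ components by Lemma \ref{lem:ur-libpres}, the Urysohn analogue of Lemma \ref{liberating-preserve}. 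Since liberation for a piecewise elementary pair is by definition liberation on each elementary component, exhausting all components yields a triple $(p',q',vu)$ liberating $p$ in $(p,q)$ with $|vu| = |v| + |u|$; the case of $q$ is symmetric.

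The word-length bookkeeping and the combinatorial skeleton are a verbatim copy of the rational argument, so they are routine. The genuine content --- and the step I expect to be most delicate to write out in full --- is the verification, inside the base case, that every auxiliary finite metric space constructed along the way satisfies the triangle inequality and admits a partial isometry extending the previous one with the prescribed order behaviour. This is precisely the role of Solecki's theorem (Theorem \ref{thm:solecki}) and Corollary \ref{cor:slawek}, and with Remark \ref{sec:main-remark} supplying the ordered version, the proof of Lemma \ref{liberating-general} transfers without further difficulty.
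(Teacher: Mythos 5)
Your proposal is correct and follows the paper's own sketch: the paper states only that free amalgamation over $\emptyset$ supplies the fixed endpoints needed to make partial isomorphisms informative and that everything else is ``proved like for the rationals, and we omit the details,'' and your write-up fills in the omitted base case (the Urysohn analogue of Lemmas \ref{liberating-elem-weak}--\ref{liberating-elem-strong}) and the induction on elementary components via Lemma \ref{lem:ur-libpres} exactly as one would. One minor caveat: citing Corollary \ref{cor:slawek} and Remark \ref{sec:main-remark} for the orbit-extension steps in the base case is overkill and slightly mis-aimed --- Remark \ref{sec:main-remark} refers to Theorem \ref{thm:isomext}, and what the base case actually uses is simply the one-point extension property of the Fra\"iss\'e class of finite ordered rational metric spaces (extend $p$ one point of the orbit of $\alpha$ at a time), with Proposition \ref{sec:isom-to-ord} carrying a compatible ordering along. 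Corollary \ref{cor:slawek} is the tool reserved for the genuinely hard step afterward (Lemma \ref{lem:ur-lib-words} via Theorem \ref{thm:isomext}), where the free-amalgam structure over $\fix{p}\cap\fix{q}$ is indispensable; invoking it here overstates the machinery without affecting the validity of your argument.
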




\begin{lemma}\label{lem:ur-lib-words}
  For any pair $(p,q)$ of partial isomorphisms of the $\oUr$ and any
  word $u \in F(s,t)$ there are extensions $p'$ and $q'$ of $p$ and
  $q$ respectively and a reduced word $w=*u$ such that $w(p',q')(c) =
  c$ for any $c \in \dom{p} \cup \dom{q}$.
\end{lemma}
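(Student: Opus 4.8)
The plan is to mimic the proof of Lemma~\ref{lem:density-words}, using the liberation machinery for the Urysohn case (Lemma~\ref{lem:ur-libgen} and Lemma~\ref{lem:ur-libpres}) in place of the rational versions, but replacing the single ``declare everything fixed'' step -- which no longer produces a partial isometry -- by an appeal to Theorem~\ref{thm:isomext} (in its ordered form, Remark~\ref{sec:main-remark}). First I would reduce, via Lemma~\ref{pelem-dense} (which holds verbatim for $\oUr$), to the case where $(p,q)$ is a piecewise elementary pair. Then by Lemma~\ref{lem:ur-libgen} I obtain extensions $\bar p$, $\bar q$ of $p$, $q$ and a reduced word $v$ such that $(\bar p,\bar q, v u)$ liberates $p$ in $(p,q)$; in particular the point set $A := v u(\bar p,\bar q)(\essen{p}\cup\essen{q})$ is defined, all of $\dom{\bar p}\cup\dom{\bar q}$ lies in a common finite ordered metric space, and the endpoints $\min,\max$ are fixed by both $\bar p$ and $\bar q$ (so $\fix{\bar p}\cap\fix{\bar q}\ne\es$).

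Next I would form an auxiliary pair on which to run Theorem~\ref{thm:isomext}. On the finite ordered metric space $\bA_0 := \dom{\bar p}\cup\ran{\bar p}\cup\dom{\bar q}\cup\ran{\bar q}$, consider the two partial isometries $\bar p$ and $\bar q$; they satisfy $\per{\bar p}=\fix{\bar p}$, $\per{\bar q}=\fix{\bar q}$ (since the endpoints are the only common fixed points and informative maps have no nontrivial cycles) and $\fix{\bar p}\cap\fix{\bar q}\ne\es$. Apply Theorem~\ref{thm:isomext} (ordered version) to get a finite ordered metric space $\bB$, extensions $\hat p\supseteq\bar p$, $\hat q\supseteq\bar q$, and a word $w' = t^K v'$ with $K\ne 0$ such that $\dom{\hat p}\cup w'(\hat p,\hat q)(\bA_0)$ is the free amalgam of $\dom{\hat p}$ and $w'(\hat p,\hat q)(\bA_0)$ over $\fix{p}\cap\fix{q}$. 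The point of the free amalgam is exactly that the metric between a point of $w'(\hat p,\hat q)(\bA_0)$ and a point of $\dom{\hat p}$ is forced through a common fixed point: this means I can now \emph{consistently} extend $\hat p$ by declaring $\hat p(y)=y$ for every $y\in w'(\hat p,\hat q)(\bA_0)$ without violating any triangle inequality -- the resulting map is a partial isometry $p'$ of an ordered metric space. (Here I would combine $w'$ with the liberating word $v u$: first move the essential points out by $v u$, then by a further liberation-preservation step, Lemma~\ref{lem:ur-libpres}, prepend enough of the Theorem~\ref{thm:isomext} word so that the composite stays reduced and lands in the amalgamated region.)

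Finally, set $q' := \hat q$ (suitably extended if needed to keep informativeness) and put $w := u^{-1} v^{-1} (w')^{-1}\, s\, w' v u$, reduced appropriately; arguing as in Lemma~\ref{lem:density-words}, $w(p',q')(c)=c$ for every $c\in\dom p\cup\dom q$, since $w' v u$ carries such a $c$ into the region pointwise fixed by $p'$, then $s$ acts as the identity there, and $(w' v u)^{-1}$ brings it back. Writing $w=*u$ in the required form is then just bookkeeping about reduction. I expect the main obstacle to be the careful verification that the various words compose without cancellation (so that the length condition $w=*u$ and the ``starts from a power of $t$'' shape of the liberating words are maintained) and, more substantively, that after applying Theorem~\ref{thm:isomext} one really can still find an informative extension realizing the prescribed fixed values $p'(y)=y$ inside $\oUr$ -- i.e., that the finite ordered metric space $\bB$ together with $p'$ really embeds into $\oUr$ as an ordered structure, which is where Remark~\ref{sec:main-remark} and ultrahomogeneity of $\oUr$ do the work.
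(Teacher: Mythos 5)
Your proposal follows the paper's proof essentially verbatim: reduce to a piecewise elementary pair, apply Lemma~\ref{lem:ur-libgen} to obtain a liberating triple $(\bar p,\bar q,vu)$, run the ordered version of Theorem~\ref{thm:isomext} (via Remark~\ref{sec:main-remark}) on $\bar p$, $\bar q$, and $\bA_0=\dom{\bar p}\cup\ran{\bar p}\cup\dom{\bar q}\cup\ran{\bar q}$ to get $\hat p$, $\hat q$, and a word $w'$ whose image is freely amalgamated over the common fixed points, use Lemma~\ref{lem:ur-libpres} plus the observation that $w'$ ends in a power of $s$ while $vu$ begins with a power of $t$ to keep $w'vu$ reduced, declare $p'$ to fix the amalgamated image (legitimate precisely because the free amalgam forces every such distance to $\dom{\hat p}$ through a common fixed point), and conjugate: $w=(w'vu)^{-1}\,s\,(w'vu)$. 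This is exactly the argument given in the paper (which has the minor typo $v'uv$ where $v'vu$ is meant), so your write-up is correct and takes the same approach.
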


\begin{proof}
  We can assume that $(p,q)$ is piecewise elementary. By Lemma
  \ref{lem:ur-libgen} there are extensions $\tilde{p}$, $\tilde{q}$ of
  $p$ and $q$ and a reduced word $vu$ such that $(\tilde{p},
  \tilde{q}, vu)$ liberates $p$ in $(p,q)$. Now apply Theorem
  \ref{thm:isomext} (with Remark \ref{sec:main-remark} and Lemma
  \ref{liberating-preserve}) to $\tilde{p}$, $\tilde{q}$ and
  \[\bA = \dom{\bar{p}} \cup \ran{\bar{p}} \cup \dom{\bar{q}} \cup
  \ran{\bar{q}}\] to get extension $\bar{p}$ and $\bar{q}$ and a
  reduced word $v'$. Note that $v'vu$ is reduced, because $v$ starts
  from a power of $t$ and $v'$ by construction ends in a power of
  $s$. By the item \eqref{thm:prop:amal} of Theorem \ref{thm:isomext}
  we can extend $\bar{p}$ to $p'$ by declaring
  \[p'|_{v'vu(\dom{p} \cup \dom{q})} = \id.\] Set $q' = \bar{q}$ and
  $w = (v'uv)^{-1}s(v'uv)$. It is easy to see that $w(p',q')(c) = c$
  holds for any $c \in \dom{p} \cup \dom{q}$.
\end{proof}

\begin{theorem}
  Every two-dimensional class of topological similarity in
  $\aut{\oUr}$ is meager.
\end{theorem}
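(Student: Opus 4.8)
The strategy is to mimic Theorem~\ref{thm:rationals} verbatim, using the Urysohn analog of Lemma~\ref{main-lemma} that follows from Lemma~\ref{lem:ur-lib-words} exactly as Lemma~\ref{main-lemma} followed from Lemma~\ref{lem:density-words}. First I would state this analog: fixing a sequence $\{u_k\}$ of reduced words, for a generic pair $(f,g) \in \aut{\oUr} \times \aut{\oUr}$ there is a sequence of reduced words $w_k = v_k u_k$ with $w_k(f,g) \ra \id$. The proof is the same Baire-category argument as in Lemma~\ref{main-lemma}: fix an enumeration $\{c_i\}$ of the underlying set of $\oUr$, set $B_n^k = \{(f,g) : \exists w = vu_k\ \textrm{reduced with}\ w(f,g)(c_i) = c_i\ \textrm{for}\ 0 \le i \le n\}$, observe that $B_n^k$ is open (witnessed by restricting $f$, $g$ to the finite orbit set), and note that density of $B_n^k$ is precisely Lemma~\ref{lem:ur-lib-words}. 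Then $\bigcap_{n,k} B_n^k$ is comeager by the Baire theorem.

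Next I would run the contradiction exactly as in the proof of Theorem~\ref{thm:rationals}. Assume some pair $(f_1,g_1)$ has a non-meager topological similarity class. Applying the analog of Lemma~\ref{main-lemma} to $u_k = t^k s^k$ yields a sequence $w_n = v_n t^n s^n$ of reduced words along which $(f_1, g_1)$ converges to the identity. Now fix a point $a$ of $\oUr$ and set $F_a = \{(f,g) : f(a) = a = g(a)\}$ and $C_n = \{(x,y) : \exists m > n\ w_m(x,y)(a) \ne a\}$; each $C_n$ is open, and it is dense in $(\aut{\oUr} \times \aut{\oUr}) \setminus F_a$. For density, take a basic open set $U(p) \times U(q)$ disjoint from $F_a$, say with $p(a) \ne a$; then for large $k$ the point $p^k(a)$ lies outside $\dom{p}$, so (using ultrahomogeneity of $\oUr$, which provides infinitely many admissible values for an extension of $p$ at $p^{k-1}(a)$) there are infinitely many possible values of $w_k(f,g)(a)$ for $(f,g) \in U(p) \times U(q)$, hence one of them is $\ne a$. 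By the Baire theorem $\bigcap_n C_n$ is comeager in the complement of $F_a$, so generically $w_n(f,g)(a) \not\ra a$ there. Since $\bigcup_a (\aut{\oUr} \times \aut{\oUr}) \setminus F_a = (\aut{\oUr} \times \aut{\oUr}) \setminus \{(\id,\id)\}$, the convergence $w_n(f_1,g_1) \to \id$ forces $(f_1,g_1) = (\id,\id)$, contradicting non-meagerness of its similarity class (the class of $(\id,\id)$ is a single point, hence meager).

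The one genuinely new ingredient compared with the rational case is Lemma~\ref{lem:ur-lib-words}, which in turn rests on Theorem~\ref{thm:isomext} (with Remark~\ref{sec:main-remark}); everything else is the same Baire-category bookkeeping. So I would keep the writeup short: establish the Urysohn analog of Lemma~\ref{main-lemma} in a sentence or two, then say ``the rest of the argument is identical to the proof of Theorem~\ref{thm:rationals}'' and reproduce only the $F_a$/$C_n$ paragraph, since that is where one needs ultrahomogeneity of $\oUr$ in place of that of $\Q$. The main obstacle is really already behind us: it is the amalgamation-theoretic Lemma~\ref{lem:ur-lib-words}, whose proof genuinely needs Solecki's theorem (via Theorem~\ref{thm:isomext}) to control distances when one wants to declare $p'$ to fix a translated copy of $\dom p \cup \dom q$ without violating the isometry condition. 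Granting that lemma, the proof of the theorem is essentially a transcription.
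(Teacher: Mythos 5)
Your proposal is correct and follows essentially the same route as the paper, whose proof of this theorem is literally ``repeat the proofs of Lemma~\ref{main-lemma} and Theorem~\ref{thm:rationals} using Lemma~\ref{lem:ur-lib-words} in place of Lemma~\ref{lem:density-words}''; you simply spell out that repetition, including the correct observation that ultrahomogeneity (one-point metric extensions) supplies the infinitely many admissible values needed for density of the sets $C_n$. One cosmetic remark: the phrase ``forces $(f_1,g_1)=(\id,\id)$'' is better stated as in the paper — the similarity class of $(f_1,g_1)$ is contained in the set of pairs converging along $w_n$, which you have shown is meager off the singleton $\{(\id,\id)\}$, contradicting non-meagerness — but this is exactly the bookkeeping already present in Theorem~\ref{thm:rationals}.
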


\begin{proof}
  Repeat the proofs of Lemma \ref{main-lemma} and Theorem
  \ref{thm:rationals} using Lemma \ref{lem:ur-lib-words} instead of
  Lemma \ref{lem:density-words}.
\end{proof}

\begin{remark}
  All the results in this section can be proved for the randomly
  ordered random graph in the same way, as they were proved for the
  randomly ordered rational Urysohn space. One can also formally
  deduce this case from the above results viewing graphs as metric
  spaces with all the distances in $\{0,1,2\}$.
\end{remark}

\end{document}